\keywords{} 
\subjclass[2010]{}
\newcommand*{\ext}{\mathcal{E}\kern -.7pt xt}
\newcommand*{\hhom}{\mathcal{H}\kern -.7pt om}
\theoremstyle{plain}
\newtheorem{thm}{Theorem}[section]
\newtheorem{thml}{Theorem}
\newtheorem{prop}[thm]{Proposition}
\newtheorem{cor}[thm]{Corollary}
\newtheorem{lem}[thm]{Lemma}
\theoremstyle{definition}
\newtheorem{defn}[thm]{Definition}
\newtheorem*{ackn}{Acknowledgment}
\newtheorem{rmk}[thm]{Remark}
\newcommand{\sA}{\mathcal{A}}
\newcommand{\sD}{\mathcal{D}}
\newcommand{\sE}{\mathcal{E}}
\newcommand{\sF}{\mathcal{F}}
\newcommand{\sG}{\mathcal{G}}
\newcommand{\sL}{\mathcal{L}}
\newcommand{\sN}{\mathcal{N}}
\newcommand{\sO}{\mathcal{O}}
\newcommand{\sR}{\mathcal{R}}
\newcommand{\sU}{\mathcal{U}}
\newcommand{\sW}{\mathcal{W}}
\newcommand{\mC}{\mathbb{C}}
\newcommand{\mD}{\mathbb{D}}
\newcommand{\mL}{\mathbb{L}}
\newcommand{\mP}{\mathbb{P}}
\newcommand{\mZ}{\mathbb{Z}}
\newcommand{\Ima}{\mathrm{Im}\,}
\newcommand{\Ker}{\mathrm{Ker}\,}
\newcommand{\rank}{\mathrm{rank}\,}
\numberwithin{equation}{section}
\newcommand{\beba}  {\begin{equation}\begin{array}{rcl}}
\newcommand{\eaee}  {\end{array}\end{equation}}
\def\l@section{\@tocline{1}{0pt}{1pc}{}{}}
\def\l@subsection{\@tocline{2}{0pt}{1pc}{4.6em}{}}
\def\l@subsubsection{\@tocline{3}{0pt}{1pc}{7.6em}{}}
\renewcommand{\tocsection}[3]{%
  \indentlabel{\@ifnotempty{#2}{\makebox[2.3em][l]{%
    \ignorespaces#1 #2.\hfill}}}#3}
\renewcommand{\tocsubsection}[3]{%
  \indentlabel{\@ifnotempty{#2}{\hspace*{2.3em}\makebox[2.3em][l]{%
    \ignorespaces#1 #2.\hfill}}}#3}
\renewcommand{\tocsubsubsection}[3]{%
  \indentlabel{\@ifnotempty{#2}{\hspace*{4.6em}\makebox[3em][l]{%
    \ignorespaces#1 #2.\hfill}}}#3}
\title{Global Kodaira Spencer class and Massey products}
\keywords{Semistable Fibrations, Global Kodaira-Spencer class, Supported deformations, Massey products, Local systems}
\subjclass[2020]{14D06, 14D05, 14E20, 14J40}
\author{Luca Rizzi}
\address{Luca Rizzi\\Department of Mathematics, Computer Science and Physics \\
	Universit\`a di Udine\\
	Udine, 33100\\ Italia
	\texttt{luca.rizzi@uniud.it}}
\author{Francesco Zucconi}
\address{Francesco Zucconi\\Department of Mathematics, Computer Science and Physics \\
Universit\`a di Udine\\
Udine, 33100\\ Italia
\texttt{francesco.zucconi@dimi.uniud.it}}
\begin{document}

\markboth{}{}

\begin{abstract}
We define a new notion of supported global deformation class for a semistable family of complex varieties over a curve $f\colon X\to B$.
We use this notion to study when $X$, possibly up to a finite covering, has a generically finite morphism onto a product $B\times Y$ with $Y$ of general type.
\end{abstract}

\maketitle

%  The study of the \emph{relative dualizing sheaf} $\omega_{X/B}:=\omega_X\otimes f^*\omega_B^\vee$ is of great interest and, by a famous result of Fujita, see \cite{Fu} and \cite{Fu2}, its direct image $f_*\omega_{X/B}$ has two splittings classically known as \emph{first and second Fujita decomposition}. They are respectively 
%\begin{equation}
%	f_*\omega_{X/B}\cong \sO_B^h\oplus \sE
%\end{equation} where $\sE$ is a locally free nef sheaf on $B$ with $h^1(B,\sE\otimes \omega_B)=0$
%and
%\begin{equation}
%\label{seconda}
%	f_*\omega_{X/B}\cong \sU\oplus \sA
%\end{equation} where $\sU$ is a unitary flat vector bundle and $\sA$ ample.

\section{Introduction}  Let $f\colon X\to B$ be a semistable family of complex varieties over a complex curve $B$ and with smooth $n-1$-dimensional  general member denoted by $X_b$, $b\in B$. The Kodaira-Spencer map at $b$ identifies a vector subspace inside the space of infinitesimal deformations of $X_b$. 
It is a natural question to study $f\colon X\to B$ in terms of these infinitesimal deformations. In particular, the importance of \textit{supported deformations} in the theory of curves and of fibrations on a surface is well-known; see c.f. \cite[p. 2 and section 6]{AC}, and \cite{PZ}.

In this paper we take a step  forward and we construct a supported \textit{global} deformation class $\rho(\xi)$ naturally given by $f$; see Definition \ref{classe}, \cite{tesi} and also \cite{victor} in the case of a fibered surface. The technical core of the paper is to find the relation between $\rho(\xi)$ and the theory of relative Massey products. Here we recall that the notion of Massey products in algebraic geometry has been introduced in \cite{CP} and \cite{PZ} and then applied in 
\cite{Ra}, \cite{PR}, \cite{CNP}, \cite{victor}, \cite{BGN}, \cite{PT}, \cite{RZ2},
\cite{RZ3}, \cite{RZ1}, \cite{RZ4}, \cite{CRZ} and \cite{R}.  We refer to these sources for a complete discussion and to Section \ref{sez2} for a brief review. 

\subsection{Main results}
Let $\mathbb D^1$ be the local system on $B$ whose elements are the holomorphic  1-forms on the fibers of $f$ which are liftable to \textit{closed} holomorphic forms of $X$ and $L\leq\Gamma(A,\mD^1)$ be a vector space of sections on an open set $A\subseteq B$.  Denote by $s_i\in \Gamma(A,f_*\Omega^1_{X,d})$ liftings to $X$ of the elements of a basis for $L$. 
We define $\sD^A$ as the divisor in $f^{-1}(A)$ given by the common zeroes of the sections $s_{i_1}\wedge\dots\wedge s_{i_{n-1}}\wedge \sigma$ where $\sigma$ runs over the local sections of $\omega_B$.
We denote by $\sD$ the divisor obtained by the horizontal components of $\sD^A$ and with  $D_b$ the restriction of $\sD$ to the general fiber $X_b$. 

The main applications of our point of view are as follows. The first one is strictly related to the class $\rho(\xi)$ and the second one is related to the Albanese morphism.

\begin{thml}
	\label{B}
	Assume that the morphism 
	$
	\bigwedge^{n-1}L\otimes \omega_{B_{|A}}\to f_*\omega_{X_{|A}}
	$ is an injection of sheaves, $\rho(\xi)$ is supported on $\sD$ and that $f_*\sO_X(\sD)$ is a line bundle. Then up to a finite étale covering  $\widetilde{B}\to B$ the associated base change $\widetilde{X}$ has a generically finite surjective morphism  $\widetilde{X}\to\widetilde{B}\times Y$, where $Y$ is an $n-1$-dimensional variety of general type.
\end{thml}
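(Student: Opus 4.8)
The plan is to use the space $L$ and its top exterior power $\bigwedge^{n-1}L$ to build, fiber by fiber, a canonically defined rational map whose image will be $Y$, and then to read off from the supportedness of $\rho(\xi)$ that this map is rigid, so that the images form an isotrivial family that trivializes after a finite étale base change.

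\textbf{Step 1 (the wedge map).} First I would restrict to a general $b\in A$ and write $t_i=s_i|_{X_b}\in H^0(X_b,\Omega^1_{X_b})$ for the restrictions of the chosen liftings. The assignment $s_{i_1}\wedge\dots\wedge s_{i_{n-1}}\mapsto t_{i_1}\wedge\dots\wedge t_{i_{n-1}}$ exhibits $\bigwedge^{n-1}L$ as a linear subsystem of $H^0(X_b,\omega_{X_b})$, and the hypothesis that $\bigwedge^{n-1}L\otimes\omega_{B|A}\to f_*\omega_{X|A}$ is an injection of sheaves guarantees that this restriction is injective for general $b$, hence that the subsystem is nondegenerate. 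By the very definition of $\sD$ as the horizontal part of the common zero locus of the $s_{i_1}\wedge\dots\wedge s_{i_{n-1}}\wedge\sigma$, its fixed divisor is exactly $D_b$. Removing this fixed part I obtain a rational map
\[
\phi_b\colon X_b\dashrightarrow \mP^N,\qquad N+1=\dim\textstyle\bigwedge^{n-1}L,
\]
defined by the subsystem $\bigwedge^{n-1}L\subseteq H^0(X_b,\omega_{X_b}(-D_b))$; I set $Y_b:=\overline{\phi_b(X_b)}$. Here the assumption that $f_*\sO_X(\sD)$ is a line bundle is what lets me split off the fixed divisor compatibly over $A$, so that the $\phi_b$ assemble into a single rational map over the base rather than being merely fiberwise.

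\textbf{Step 2 ($Y_b$ has dimension $n-1$ and is of general type).} Since the $t_i$ are restrictions of closed forms (the elements of $\mD^1$ lift to closed forms on $X$), I would let them generate, by integration, an Albanese-type map $\alpha_b\colon X_b\to T_b$ to a complex torus, with image $W_b=\alpha_b(X_b)\subseteq T_b$; then $\phi_b$ factors as $\alpha_b$ followed by the Gauss map of $W_b$, because the $t_{i_1}\wedge\dots\wedge t_{i_{n-1}}$ are pullbacks of the constant $(n-1)$-forms on $T_b$. The nondegeneracy from Step 1 forces $\dim W_b=n-1$ and makes the Gauss map generically finite, so $\phi_b$ is generically finite and $\dim Y_b=n-1$. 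By the structure theory of subvarieties of complex tori (Ueno--Kawamata), a subvariety with generically finite Gauss map is of general type; hence each $Y_b$ is an $(n-1)$-dimensional variety of general type.

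\textbf{Step 3 (isotriviality from supportedness --- the crux).} The varieties $Y_b$ organize into a family $g\colon\sY\to A$, and its variation is governed by the part of the global Kodaira--Spencer data of $f$ that survives on the moving part $\omega_{X/B}(-\sD)$. This is exactly what $\rho(\xi)$ encodes through the relative Massey-product description established in the technical core of the paper: the component of $\rho(\xi)$ transverse to $\sD$ is the Kodaira--Spencer class of $g$. The hypothesis that $\rho(\xi)$ \emph{is supported on $\sD$} therefore says precisely that this transverse component vanishes, i.e.\ the Kodaira--Spencer class of $g$ is zero. Consequently $g$ is infinitesimally, and hence generically, isotrivial, so all the $Y_b$ are isomorphic to a fixed $(n-1)$-dimensional variety $Y$ of general type.

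\textbf{Step 4 (trivialization after a finite étale base change).} Because $Y$ is of general type, $\Aut Y$ is finite, so the isotrivial family $g$ is a fiber bundle with finite structure group, classified by a representation of $\pi_1(B)$ with finite image. Choosing a finite étale cover $\widetilde{B}\to B$ that trivializes this monodromy, the base-changed family of images becomes $\widetilde{B}\times Y$, and composing the base change of $\phi$ with this trivialization yields a morphism $\widetilde{X}\to\widetilde{B}\times Y$ which, fiberwise over $\widetilde{B}$, is the generically finite map $\phi_b$ onto $Y_b\cong Y$; it is therefore generically finite and surjective, as required. The main obstacle is Step 3: passing from the purely algebraic statement that $\rho(\xi)$ is supported on $\sD$ to the geometric isotriviality of $\{Y_b\}$ forces me to invoke the identification of $\rho(\xi)$ with a relative Massey product and the interpretation of its $\sD$-support as the vanishing of the transverse Kodaira--Spencer class, whereas Steps 1, 2 and 4 rest on standard facts (nondegeneracy of a linear system, the Ueno--Kawamata dichotomy, and finiteness of $\Aut$ of a variety of general type).
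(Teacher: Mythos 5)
There is a genuine gap, and it sits exactly where you flag it: Step 3. You assert that ``the component of $\rho(\xi)$ transverse to $\sD$ is the Kodaira--Spencer class of the family $g\colon\sY\to A$ of images $Y_b$'' and attribute this to the technical core of the paper, but no such identification is proved there, and nothing in the definition of supportedness (Definition \ref{classe}, a kernel condition in $H^0(A,\ext^1_f(\Omega^1_{X/B}(-\sD),f^*\omega_B))$) produces a Kodaira--Spencer class for a family of Gauss-map images. What the paper actually proves is different: Theorem \ref{viceversa} shows, by a diagram chase with relative $\ext$ sheaves --- locally lifting the identity of $\sO_B$ to $f_*\sE^\vee\otimes\omega_B$, pushing it to a relative vector field $\theta_b$ with poles along $\sD$, and contracting it against the liftings $s_i$ --- that supportedness together with the hypothesis that $f_*\sO_X(\sD)$ is a line bundle forces $L$ to be \emph{Massey trivial}. (That line-bundle hypothesis is consumed there, to get $h^0(X_b,\sO_{X_b}(D_b))=1$ and thereby control the poles of the contractions $v_b(s_i)$; it is not, as you claim in Step 1, used to ``split off the fixed divisor compatibly over $A$''.) Then Proposition \ref{localglobal}, which is where your injectivity (strictness) hypothesis enters, upgrades Massey triviality to canonical \emph{closed} liftings $\tilde s_i$ with $\tilde s_1\wedge\dots\wedge\tilde s_n=0$ identically on the $n$-dimensional total space, and the generalized Castelnuovo--de Franchis Theorem \ref{cas2} is applied on $f^{-1}(A)$ --- not fiber by fiber --- producing a single $Y$ of general type at once (Corollaries \ref{finito1} and \ref{finito2}). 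So no family $\{Y_b\}$ and no isotriviality argument ever appear; your route would require an independent proof that supportedness kills the variation of the $Y_b$, which is essentially equivalent to re-proving Theorem \ref{viceversa}, and you never use Massey triviality at all.

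Two secondary problems compound this. In Step 4, finiteness of $\Aut Y$ does not make the local maps glue: the maps $\phi_b$ are built from $L\leq\Gamma(A,\mD^1)$, which is only locally defined, and a loop in $B$ replaces $L$ by its monodromy transform $g\cdot L$, which is a different subspace of forms defining a different foliation and a different map $\phi_b^g$ --- there is no reason it equals $\psi\circ\phi_b$ for some $\psi\in\Aut Y$. The paper resolves precisely this point via \cite[Theorem B]{RZ4}: Massey triviality plus strictness imply the local system $\mL$ generated by $L$ has \emph{finite} monodromy, so on the associated finite cover $\widetilde{B}$ the forms become global closed $1$-forms on $\widetilde{X}$ and Proposition \ref{localglobal} applies globally; that is where the finite covering in the statement comes from. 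In Step 2, the claim that nondegeneracy of the wedge system ``makes the Gauss map generically finite'' is also unjustified as stated: linear independence of the sections $t_{i_1}\wedge\dots\wedge t_{i_{n-1}}$ does not by itself exclude a positive-dimensional subtorus stabilizing $W_b$, in which case the Gauss map would have positive-dimensional fibers and $\dim Y_b<n-1$; one needs the stronger statement that no $n-1$ linearly independent forms in the span wedge to zero, which your injectivity hypothesis does yield for general $b$, but by an argument that is the actual content of Theorem \ref{cas2} rather than a standard nondegeneracy fact.
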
 See Corollary \ref{finito2}. We will show that this result is based on the possibility to pass from local conditions on $A$ to some global conditions on the finite covering $\widetilde{X}$.

The injectivity of $
	\bigwedge^{n-1}L\otimes \omega_{B_{|A}}\to f_*\omega_{X_{|A}}
	$ is called \textit{strictness}, see Definition \ref{strict}.
We also give an analogue of the divisor $\sD$, a notion of supported deformation and of strictness in the case of $n-1$-forms, instead of 1-forms. The correspondent result is Theorem \ref{C} below, which is technically more difficult to prove. 

In the case of the Albanese morphism we show:

\begin{thml}
	\label{D}
	Let $X$ be a smooth $n$-dimensional variety and $\alpha\colon X\to A:=Alb(X)$ its Albanese morphism. Assume that $\sL:=\Ima(\alpha^*\Omega^{n-1}_A\to \Omega^{n-1}_X)$ is a line bundle on $X$, then the global sections of $\sL$ define a rational map $h\colon X\dashrightarrow Y$ to a variety $Y$ of general type. Furthermore if $H^0(X,\sL)$ is strict, we can take $h$ to be a morphism and $Y$ is the Stein factorization of $X\to Z$ where $Z:=\alpha(X)$. Finally if the restrictions of the Albanese map to the fibers $X_b$ have degree $1$, then these fibers are birational to $Y$. 
\end{thml}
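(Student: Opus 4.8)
The plan is to identify the rational map $h$ with the composition of the Albanese map $\alpha$ and the \emph{Gauss map} of the image $Z=\alpha(X)$, and then to read off all three assertions from the structure theory of subvarieties of abelian varieties. Write $V:=T_{0}A$, so that $\Omega^{1}_{A}\cong V^{*}\otimes\sO_{A}$ and $\Omega^{n-1}_{A}\cong\bigwedge^{n-1}V^{*}\otimes\sO_{A}$ are trivial and $H^{0}(X,\Omega^{1}_{X})\cong H^{0}(A,\Omega^{1}_{A})=V^{*}$ is spanned by the forms $\alpha^{*}\omega_{i}$. At a general point $x\in X$ with $z=\alpha(x)$ the codifferential $\alpha^{*}_{x}\colon V^{*}\to\Omega^{1}_{X,x}$ has image $(T_{z}Z)^{*}$ of rank $\dim Z$, so $\bigwedge^{n-1}\alpha^{*}_{x}$ has image $\bigwedge^{n-1}(T_{z}Z)^{*}$; hence $\sL=\Ima(\alpha^{*}\Omega^{n-1}_{A}\to\Omega^{n-1}_{X})$ has generic rank $\binom{\dim Z}{n-1}$, and the hypothesis that $\sL$ be a line bundle forces $\dim Z=n-1$. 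Moreover $\sL$ is generated by the pulled-back forms $\alpha^{*}(\omega_{i_{1}}\wedge\cdots\wedge\omega_{i_{n-1}})$, and the resulting projective coordinates are exactly the Pl\"ucker coordinates of the $(n-1)$-plane $T_{z}Z\subseteq V$. Thus the map defined by the sections of $\sL$ is $h=\gamma_{Z}\circ\alpha$, where $\gamma_{Z}\colon Z\dashrightarrow\mathrm{Gr}(n-1,V)$ is the Gauss map of $Z$; its indeterminacy (the critical locus of $\alpha$, i.e. the singularities of $Z$) is what makes $h$ only rational in general.

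For the first assertion I would invoke the structure theory of the Gauss map. Let $U:=T_{0}B(Z)$, where $B(Z)=\mathrm{Stab}(Z)^{0}\subseteq A$ is the connected component of the stabilizer. Since $T_{z}Z\supseteq U$ for every smooth $z$, with the complementary directions determined by the image of $z$ in $Z/B(Z)$, the Gauss map is constant on $B(Z)$-orbits and factors through $Z\to Z/B(Z)$; conversely the positive-dimensional fibres of $\gamma_{Z}$ are cosets of an abelian subvariety contained in $B(Z)$, so the Gauss map of $Z/B(Z)$, whose stabilizer is finite, is generically finite. Therefore $Y=\overline{h(X)}$ is birational to $Z/B(Z)$, which is of general type by Ueno's theorem ($\Kod(Z)=\dim(Z/B(Z))$). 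This identification of $Y$ with the Gauss image is the technical heart of the argument, and I expect the generic finiteness of the Gauss map on the finite-stabilizer quotient $Z/B(Z)$ to be the main obstacle.

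For the second assertion I would unwind Definition \ref{strict}: strictness of $H^{0}(X,\sL)$ says that the forms $\alpha^{*}(\omega_{i_{1}}\wedge\cdots\wedge\omega_{i_{n-1}})$ are linearly independent, equivalently that the Gauss image $Y$ is linearly nondegenerate in $\mathbb{P}(\bigwedge^{n-1}V)$. If $B(Z)$ were positive-dimensional, then every tangent plane $T_{z}Z$ would contain the fixed subspace $U$, so every Pl\"ucker point $\bigwedge^{n-1}T_{z}Z$ would lie in the proper linear subspace $\bigl(\bigwedge^{\dim U}U\bigr)\wedge\bigwedge^{\bullet}V$, forcing $Y$ to be linearly degenerate and contradicting strictness. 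Hence strictness gives $B(Z)=0$, so $Z$ is of general type and $\gamma_{Z}$ is birational onto its image; strictness also removes the base conditions, so $\sL$ is globally generated and $h$ is a morphism. Since $\gamma_{Z}$ is birational, $h$ and $\alpha\colon X\to Z$ have the same fibres over a dense open set, and comparing with the Stein factorization $X\to W\to Z$ identifies $Y$ with $W$, the Stein factorization of $X\to Z$.

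Finally, if $\alpha|_{X_{b}}$ has degree $1$ onto its image, then $\alpha(X_{b})$ is $(n-1)$-dimensional and hence equals the irreducible $Z$, while $\alpha|_{X_{b}}\colon X_{b}\to Z$ is birational; composing with the birational map $Z\dashrightarrow Y$ produced above shows that $X_{b}$ is birational to $Y$. In summary, the proof rests on the codifferential computation that turns $h$ into the Gauss--Albanese map, on Ueno's structure theorem together with the generic finiteness of the Gauss map for finite-stabilizer subvarieties (the main difficulty), and on the translation of strictness into the linear nondegeneracy of the Gauss image.
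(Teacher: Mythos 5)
Your reduction to the Gauss map breaks at the decisive step. From finiteness of the stabilizer $B(Z)$ you may conclude that the Gauss map $\gamma_{Z/B(Z)}$ is \emph{generically finite}, but not that it is birational onto its image, and general type does not descend along a dominant generically finite map; so the inference ``Therefore $Y=\overline{h(X)}$ is birational to $Z/B(Z)$, which is of general type'' is a non sequitur, as is the later claim that $B(Z)=0$ makes $\gamma_Z$ birational. Concretely, let $C$ be a genus $2$ curve, $Z=C\subset J(C)$, and $X=C\times\mathbb{P}^1$ (so $n=2$, $\alpha$ is the projection followed by the Abel--Jacobi embedding, and $\sL=p^*\omega_C$ is a line bundle): here $B(Z)=0$, yet $\gamma_C$ is the canonical \emph{double} cover $C\to\mathbb{P}^1$, so the map to Pl\"ucker space that you call $h$ has image $\mathbb{P}^1$, which is not of general type. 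The $Y$ of the theorem is not the image of the linear system in Pl\"ucker space: in the paper it is produced by the Castelnuovo--de Franchis-type Theorem \ref{pcast} of \cite{RZ5}, i.e.\ it is the connected-fibre (Stein) quotient, which in the example is $C$ itself, not $\mathbb{P}^1$. The same confusion invalidates your identification of the fibres of $h$ with those of $\alpha$ in the strict case: a general fibre of the Pl\"ucker map is a union of $\deg\gamma_Z$ fibres of $\alpha'\colon X\to Z$. (Your route could be repaired by inserting Stein factorizations throughout --- the Stein factorization of $\gamma_Z\circ\alpha'$ coincides with that of $\alpha'$, which is finite over the general type variety $Z$ when $B(Z)=0$ --- but that repair is precisely the missing content, which the paper supplies via Theorem \ref{pcast} together with a foliation argument.)

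A second, independent problem is your reading of ``strict''. The hypothesis in Theorem \ref{abel} is the $(n-1)$-strictness of Section \ref{sez5}: among the meromorphic ratios $w_i/w_j$ of sections of the line bundle $\sL$ there exist $n-1$ differentials $d(w_i/w_j)$ that do not wedge to zero. This is what forces $\dim Y=n-1$ in Theorem \ref{pcast}; it is not Definition \ref{strict} (which concerns $1$-forms in a fibration over a curve and does not even apply here), it is not linear independence of the pullbacks $\alpha^*(\omega_{i_1}\wedge\cdots\wedge\omega_{i_{n-1}})$, and it is not linear nondegeneracy of the Gauss image. Likewise ``strictness removes the base conditions, so $\sL$ is globally generated and $h$ is a morphism'' is unsupported: in the paper, $h$ can be taken to be a morphism because $Y$ is identified with the Stein factorization of the \emph{morphism} $\alpha'\colon X\to Z$, via the observation that the kernel of contraction $T_X\to\Omega^{n-2}_X$ against the closed sections of $\sL$ is (after saturation) an integrable foliation whose leaves lie in the fibres of both $\alpha'$ and $h$. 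Your opening codifferential computation forcing $\dim Z=n-1$ is correct and agrees with the paper's first step, but the core of the argument as written does not go through.
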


See Theorem \ref{abel} and following Corollary.

\subsection{Global Kodaira-Spencer class}
We recall that 
%From now on $f\colon X\to B$ will denote a semistable fibration of a smooth complex $n$-dimensional algebraic variety $X$ on a smooth complex curve $B$. We denote by $X_b$ the fiber over  a point $b\in B$; 
all the fibers $X_b$ are $n-1$-dimensional and either smooth or reduced and normal crossing divisors. The open set of $B$ corresponding to smooth fibers will be denoted by $B^0$ and its complement $B\setminus B^0$ is the image of the singular fibers.

The exact sequence defining the sheaf of relative differential forms
\begin{equation}
	\label{seqin}
	0\to f^*\omega_B\to\Omega^1_X\to \Omega^1_{X/B}\to 0
\end{equation}
gives the associated extension class $\xi\in\textnormal{Ext}^{1}(\Omega^1_{X/B},f^*\omega_B)$. By restriction of Sequence (\ref{seqin}) to the general fiber we get the sequence
\begin{equation}
	\label{seqin2}
	0\to\sO_{X_b}\otimes T_{B,b}^\vee\to \Omega^1_{X|X_b}\to \Omega^1_{X_b}\to0
\end{equation} 
 and we construct the classes
\begin{equation}
	\xi_b\in \text{Ext}^1(\Omega^1_{X_b},\sO_{X_b})\otimes T_{B,b}^\vee=H^1(X_b,T_{X_b})\otimes T_{B,b}^\vee.
\end{equation} 
where $b\in B^0$.

%
%restricted to a fiber $X_b$, with $b\in B^0$, gives the short exact sequence 
%\begin{equation}
%	\label{seqin2}
%	0\to\sO_{X_b}\otimes T_{B,b}^\vee\to \Omega^1_{X|X_b}\to \Omega^1_{X_b}\to0
%\end{equation} 
%which is associated to an element in the extension group
%\begin{equation}
%	\xi_b\in \text{Ext}^1(\Omega^1_{X_b},\sO_{X_b})\otimes T_{B,b}^\vee=H^1(X_b,T_{X_b})\otimes T_{B,b}^\vee.
%\end{equation} 

All the extensions $\xi_b$ can be encoded in a unique object thanks to the notion of relative extension sheaf $\ext^1_f(\Omega^1_{X/B},f^*\omega_B)$. Recall that the relative extension sheaf $\ext^p_f$ is by definition the $p$-th derived functor of $f_*\hhom$, hence 
$\ext^1_f(\Omega^1_{X/B},f^*\omega_B)$ is a sheaf on the base $B$ isomorphic to 
$\ext^1_f(\Omega^1_{X/B},\sO_X)\otimes \omega_B$; see cf. \cite{S}. Now by applying the functor $f_*\hhom$ to Sequence (\ref{seqin}) there is  a morphism
$$
\sO_B\to \ext^1_f(\Omega^1_{X/B},\sO_X)\otimes \omega_B.
$$ 
We call \textit{Global Kodaira-Spencer map of the family $f$} the image of $1\in H^0(B,\sO_B)$. It can then be seen as a sheaf morphism
\begin{equation}
		\rho(\xi)\colon T_B\to \ext^1_f(\Omega^1_{X/B},\sO_X)
\end{equation} whose restriction to the general $b\in B$ gives back the usual Kodaira-Spencer map.

%Alternatively we can call $\xi\in\textnormal{Ext}^{1}(\Omega^1_{X/B},f^*\omega_B)$ the extension class corresponding to Sequence (\ref{seqin}). 
The homomorphism
\begin{equation}
	\rho\colon	\textnormal{Ext}^{1}(\Omega^1_{X/B},f^*\omega_B)\to H^0(B,\ext^1_f(\Omega^1_{X/B},f^*\omega_B))
\end{equation} is easily seen to be surjective and it is also an isomorphism when the general fiber $X_b$ is of general type; see Section \ref{sez4} for all the details on this construction. 
\subsection{Globally supported deformation} We refer the reader to Section \ref{sez2} for the notion of Massey triviality; in particular see Definition \ref{mtrivial}. Here we recall that the condition of Massey triviality is a basic tool to study both the vector bundle $K_\partial$ of holomorphic 1-forms on the fibers $X_b$ which are locally liftable to $X$, and the local system $\mathbb D^1$ of holomorphic  1-forms on the fibers which are liftable to \textit{closed} holomorphic forms of $X$. See  \cite{PT}, \cite{RZ4}, \cite{GST}, \cite{GT}.

Now consider $L\leq\Gamma(A,\mD^1)$  and recall that we denote by $\sD$ the divisor obtained by the horizontal components of $\sD^A$ and with  $D_b$ the restriction of $\sD$ to the general fiber $X_b$. 
%a vector space of sections of the local system $\mD^1$ on an open set $A\subseteq B$ and choose $\eta_i$, $i=1,\dots, l$, forming a basis for $L$. Denote by $s_i\in \Gamma(A,f_*\Omega^1_{X,d})$ the liftings to $X$ of these sections. We recall that $\dim X=n$ and we say that $L$ is Massey trivial if any $n$-uple of linearly independent sections of $L$ is Massey trivial according to Definition \ref{mtrivial}.
%
%We define $\sD^A$ as the divisor in $f^{-1}(A)$ given by the common zeroes of the sections $s_{i_1}\wedge\dots\wedge s_{i_{n-1}}\wedge \sigma$ where the $s_i$ run among the above liftings and $\sigma$ over the local sections of $\omega_B$.
%We denote by $\sD_{Hor}^A$ the divisor obtained by the horizontal components of $\sD^A$ and with  $D_b$ the restriction $\sD_{Hor}^A$ to the general fiber $X_b$. 

We  define the following sheaf on $A$ $$\ext^{1}_f(\Omega^1_{X/B}(-\sD),f^*\omega_B):=\ext^{1}_f({\Omega^1_{X/B}}_{|f^{-1}(A)}(-\sD),{f^*\omega_{B}}_{|A})$$
and we can finally recall the definition of supported class.
%Call $\mL<\mD^1$  the local system generated under monodromy action by $L$, $H_\mL$ the subgroup of $\pi_1(B, b)$ acting trivially on $\mL$ and $G_\mL=\pi_1(B, b)/H_\mW$ the monodromy group.
%
%
%Let $\widetilde{B}\to B$ the étale covering classified by the subgroup $H_\mL$ and $\tilde{f}\colon\widetilde{X}\to \widetilde{B}$ the associated pullback fibration. The inverse image of the local system $\mL$ on $\widetilde{B}$ is trivial, in particular the sections $\eta_i$ are global and their liftings $s_i$ are global closed 1-forms on $\widetilde{X}$. This means that $\sD^A$ and $\sD^A_{Hor}$ define global divisors $\widetilde{\sD}$ and $\widetilde{\sD}_{Hor}$ on $\widetilde{X}$.
%Hence  $\ext^{1}_f(\Omega^1_{\widetilde{X}/\widetilde{B}}(-\widetilde{\sD}_{Hor}),f^*\omega_{\widetilde{B}})$ is a sheaf defined on the whole base $\widetilde{B}$. 
\begin{defn}	
	\label{classe}
	We say that $\rho(\xi)$ is supported on $\sD$ if 
	\begin{equation}
		\rho(\xi)_{|A}\in \Ker H^0(A,\ext^1_f(\Omega^1_{X/B},f^*\omega_B))\to H^0(A,\ext^{1}_f(\Omega^1_{X/B}(-\sD),f^*\omega_B)).
	\end{equation}
\end{defn}
The following result is a full generalization of \cite[Theorem 1.5.1]{PZ} and \cite[Theorem A]{RZ1}. 
%\textcolor{red}{Caro Francesco, questi due teoremi sono in realtà due possibili formulazioni dello stesso risultato, il teorema dell'aggiunta 4.11 unito al viceversa 4.15. La formulazione differisce nel fatto che nel 1.4 la parte diretta è scritta su $\widetilde{B}$ (e quindi più in linea con il teorema 4.11), nel 1.5 è scritto tutto localmente. Bisogna scegliere quello più in linea con il resto dell'introduzione}
%\begin{thm}
%	Let $L<\Gamma(A,\mD^1)$ be a Massey trivial strict vector space, so that  $\widetilde{\sD}_{Hor}$ is globally defined on $\widetilde{X}$. Assume that $L$  generically generates $H^0(X_b, \Omega^1_{X_b})$ on the general fiber. Then we have that $\rho(\xi)_{|\widetilde{B}^0}$ is supported on ${\widetilde{\sD}{_{Hor}}}_{|\widetilde{B}^0}$. Viceversa assume that $\rho(\xi)$ is supported on ${\sD^A_{Hor}}$. If $f_*\sO_X(\sD^A_{Hor}):=f_*{\sO_X}_{|A}(\sD^A_{Hor})$ is a line bundle then the vector space $L$ is Massey trivial.
%\end{thm} 
\begin{thml}
	\label{A}
	Let $L\leq\Gamma(A,\mD^1)$ be a Massey trivial vector space. Assume that $L$  generically generates $\Omega^1_{X_b}$ on the general fiber. Then $\rho(\xi)$ is supported on $\sD_{|f^{-1}(A')}$, where $A'\subset A$ is a suitable open dense subset. Viceversa assume that $\rho(\xi)$ is supported on ${\sD}$. If $f_*\sO_X(\sD)$ is a line bundle then the vector space $L$ is Massey trivial.
\end{thml}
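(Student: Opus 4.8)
The plan is to build a dictionary between the two conditions appearing in the statement and then to read off each implication from it, keeping them separate since only the converse uses the line bundle hypothesis.

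First I would unwind both sides over $A$. By the construction of Section \ref{sez4}, $\rho(\xi)$ is a global section over $A$ of $\ext^1_f(\Omega^1_{X/B},\sO_X)\otimes\omega_B$ whose fibre at a general $b$ is the Kodaira--Spencer class $\xi_b\in H^1(X_b,T_{X_b})\otimes T_{B,b}^\vee$; since $\ext^1_f$ is the derived functor of $f_*\hhom$, the support condition of Definition \ref{classe} says exactly that the extension $\xi$, pulled back along $\Omega^1_{X/B}(-\sD)\hookrightarrow\Omega^1_{X/B}$, becomes relatively trivial over $A$. On the forms side I would pair $\rho(\xi)$ with the top wedges $s_{i_1}\wedge\dots\wedge s_{i_{n-1}}$ of the closed liftings of a basis of $L$, using the contraction $T_{X_b}\otimes\Omega^{n-1}_{X_b}\to\Omega^{n-2}_{X_b}$. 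The crucial observation is that, because $L\le\mathbb{D}^1\subseteq K_\partial$, every first-order cup product $\xi_b\cup s_i$ already vanishes, hence so does the naive fibrewise pairing with each top wedge; the genuine content therefore lives at second order over a base neighbourhood and is precisely the Massey product of Definition \ref{mtrivial}. This is why $\rho(\xi)$ must be handled as a global class over $A$ rather than fibre by fibre, and why the invariant is a Massey and not a cup product. The common vanishing locus of these wedges against the sections of $\omega_B$ is by construction $\sD$, so the $(-\sD)$-twist records exactly the failure of this second-order pairing off $\sD$. The technical core is to prove that the class detected by the kernel in Definition \ref{classe} and the Massey product of Definition \ref{mtrivial} coincide.

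For the forward implication, assume $L$ is Massey trivial and generically generates $\Omega^1_{X_b}$. Massey triviality, through the identification above, forces the second-order pairing of $\rho(\xi)$ with every top wedge $s_{i_1}\wedge\dots\wedge s_{i_{n-1}}$ to vanish away from the zero locus of the wedge, i.e. off $\sD$. Generic generation makes these wedges span $\Omega^{n-1}_{X_b}$ on the general fibre off $D_b$, so that their collective vanishing kills the image of $\rho(\xi)$ in the $(-\sD)$-twisted Ext sheaf. One then restricts to a dense open $A'\subseteq A$ over which the liftings $s_i$ are defined, the wedge system has no unexpected base locus, and generic generation actually holds; on $f^{-1}(A')$ this is precisely the assertion that $\rho(\xi)$ is supported on $\sD$.

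For the converse, assume $\rho(\xi)$ is supported on $\sD$ and that $f_*\sO_X(\sD)$ is a line bundle. Support on $\sD$ yields the vanishing of the second-order pairing of $\rho(\xi)$ with the top wedges modulo sections vanishing on $\sD$, i.e. it confines the Massey product to the image of the forms vanishing on $\sD$. The line bundle hypothesis is exactly what makes this correspondence invertible: since $f_*\sO_X(\sD)$ has rank one, the section realising $\sD$ — equivalently the top wedge viewed in $\omega_X(-\sD)$ — is unique up to a scalar, so the trivialising data produced by the support condition lift back canonically to closed liftings of the elements of $L$; feeding these into Definition \ref{mtrivial} gives Massey triviality. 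Were $f_*\sO_X(\sD)$ of higher rank the realising section would only be determined up to that rank and the reconstruction would fail, which is why the hypothesis cannot be dropped. The step I expect to be the main obstacle is the identification asserted at the end of the first paragraph: proving that the kernel condition of Definition \ref{classe} and the Massey product of Definition \ref{mtrivial} compute the same second-order class, with all signs and identifications of cohomology groups matching. Closely related, and essential to the converse, is the verification that the line bundle hypothesis on $f_*\sO_X(\sD)$ is exactly the rigidity needed to reverse the construction. Controlling the shrinkage from $A$ to $A'$ in the forward direction — the base locus of the wedge system and the locus where generic generation fails — is further bookkeeping but not a conceptual difficulty.
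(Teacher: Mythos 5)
Your proposal correctly identifies the shape of the theorem, but it never establishes the ``dictionary'' that you yourself flag as the technical core, and that dictionary \emph{is} the entire content of both implications. In the forward direction, your claim that vanishing of the ``second-order pairings'' of $\rho(\xi)$ against the wedges $s_{i_1}\wedge\dots\wedge s_{i_{n-1}}$, combined with generic generation, ``kills the image of $\rho(\xi)$ in the $(-\sD)$-twisted Ext sheaf'' is precisely the hard step, and the naive mechanism you gesture at would fail: the contraction pairing $H^1(X_b,T_{X_b}(D_b))\otimes H^0(X_b,\Omega^{n-1}_{X_b}(-D_b))\to H^1(X_b,\Omega^{n-2}_{X_b})$ is not perfect, so collective vanishing of pairings does not annihilate a class. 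The paper does not argue this way at all: it passes to the finite covering trivializing the monodromy, and then invokes \emph{fibrewise} the Adjoint Theorem \cite[Theorem A]{RZ1} (Massey triviality of $n$ generic sections implies $\xi_b$ is supported on the fixed divisor of the $n$ wedges $\eta_{i_1}\wedge\dots\wedge\widehat{\eta_{i_j}}\wedge\dots\wedge\eta_{i_n}$), together with \cite[Proposition 3.1.6]{PZ} to identify that divisor with $D_b$ under generic generation. Moreover your accounting for the shrinkage to $A'$ (liftings, base locus) misses the actual obstruction: $\ext^1_f(\Omega^1_{X/B}(-\sD),f^*\omega_B)$ may have torsion, so support of $\xi_b$ on $D_b$ at general $b$ only kills the image of $\rho(\xi)$ modulo a torsion section supported on finitely many points; one must delete that finite set, which is exactly why Theorem \ref{aggiuntarel} produces a dense open $A'$ and needs a separate torsion-freeness hypothesis to conclude on all of $A$ (the paper explicitly remarks that fibrewise support does not imply the global support condition).

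The converse is where your sketch is furthest from a proof. Your mechanism --- ``the trivialising data produced by the support condition lift back canonically to closed liftings of the elements of $L$; feeding these into Definition \ref{mtrivial}'' --- is not coherent: the support condition produces a local lifting of $1\in\sO_B$ through $f_*\sE^\vee\otimes\omega_B$, i.e.\ a section $\theta$ of $f_*T_X(\sD)\otimes\omega_B$ (a relative vector field with poles along $\sD$), not new liftings of the $\eta_i$; and since Massey triviality is independent of the choice of liftings (Remark \ref{remlif}), exhibiting preferred liftings is not how triviality is tested --- one must show directly that $\omega=s_1\wedge\dots\wedge s_n$ lies in $\sW=\langle\omega_i\rangle\otimes\omega_B$. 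The paper's proof of Theorem \ref{viceversa} does this by an explicit diagram chase: contraction with the fixed liftings defines $\alpha'(\theta)=\sum_i(-1)^i\,\theta(s_i)\otimes\omega_i$, a commutative square identifies $\omega$ with the resulting combination $\sum\pm\,\theta(s_i)\,\omega_i\otimes\sigma$, and the line bundle hypothesis enters only at the final step: it forces $h^0(X_b,\sO_{X_b}(D_b))=1$, so each coefficient $\theta(s_i)\in H^0(X_b,\sO_{X_b}(D_b))$ is a constant multiple of the canonical section and the combination stays inside $\langle\omega_i\rangle\otimes\omega_B$. Your ``unique up to a scalar'' remark is indeed equivalent to $h^0(X_b,\sO_{X_b}(D_b))=1$ on fibres, so you have located \emph{where} the hypothesis acts, but without the contraction construction there is nothing for that rigidity to act on; as written, both implications restate the desired equivalence rather than prove it.
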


See: Theorem \ref{aggiuntarel} and \ref{viceversa}. We point out that actually Theorem \ref{B} is a consequence of this result.

%Theorem \ref{A} and the Generalized Castelnuovo-de Franchis theorem, \cite[Theorem 5.6]{RZ4}, see also \cite[Theorem 1.14]{Ca2} and \cite[Prop II.1]{Ran}, give the following result with the further assumption that $L$ is strict, that is if the map 
%$$
%\bigwedge^{n-1}L\otimes \omega_{B_{|A}}\to f_*\omega_{X_{|A}}
%$$ is an injection of sheaves.

%
%This result, together with a generalisation of Castelnuovo de Franchis Theorem recalled in \ref{cas2}, gives the following corollary on a (finite) covering  $\widetilde{X}$ of $X$

In Section \ref{sez5} we construct the theory, parallel to the above one, in the case of volume forms on the fibers $X_b$, that is we consider $n-1$-forms instead of 1-forms. We give an analogue of the divisor $\sD$, that here we denote $\sD^{n-1}$, and a notion of supported deformation in the case of $n-1$-forms. The next Theorem relies on the generalization of Castelnuovo-de Franchis for volume forms proved in \cite[Theorem 7.2]{RZ5} and the suitable notion of strictness used therein and recalled in Section \ref{sez5}.

%\begin{thml}
%	\label{C}
%	Assume that the Global Kodaira-Spencer class of $f$ is $n-1$-supported on $\sD^{n-1}$ and that strictness condition holds. If 
%	%\begin{enumerate}
%	%\item $f_*\Omega^{n-2}_{X/B}(\sD^{A,(n-1)}_{Hor})$ is a line bundle \textcolor{red}{SERVE?}
%	$f_*\hhom( \Omega^{n-1}_{X/B}(-\sD^{n-1}),\Omega^{n-2}_{X/B})$ is zero
%	%\end{enumerate}
%	then there exists a smooth $n-1$-dimensional variety $Y$ of general type and generically finite rational maps $h_b\colon X_b\dashrightarrow Y$, for $b\in B$, such that, up to a finite base change $\widetilde X$ over $\widetilde B$, glue together to a rational map $h\colon \widetilde{X}\dashrightarrow Y$.
%\end{thml} 
\begin{thml}
	\label{C}
	Assume that the Global Kodaira-Spencer class of $f$ is $n-1$-supported on $\sD^{n-1}$ and that strictness condition holds. If $f_*\hhom( \Omega^{n-1}_{X/B}(-\sD^{n-1}),\Omega^{n-2}_{X/B})$ is zero then there exists a smooth $n-1$-dimensional variety $Y$ of general type and a generically finite dominant map $f^{-1}(A)\dashrightarrow Y\times B$.
\end{thml}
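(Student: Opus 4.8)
The plan is to run the argument of Theorem~\ref{B} with the relative one-form input replaced by relative top (volume) forms, using the Castelnuovo--de Franchis theorem for volume forms of \cite[Theorem 7.2]{RZ5} as the engine, and carrying out the construction first on the general fiber and then globalizing over $B$. First I would unwind the hypothesis that the Global Kodaira--Spencer class is $n-1$-supported on $\sD^{n-1}$. Exactly as in the correspondence behind Theorem~\ref{A}, the vanishing of the image of $\rho(\xi)$ in the $\sD^{n-1}$-twisted relative extension sheaf means that, on the general fiber $X_b$, the relative $(n-1)$-forms cut out by $\sD^{n-1}$ --- that is, the sections of $\Omega^{n-1}_{X/B}(-\sD^{n-1})_{|X_b}$, which are top forms on the $n-1$-dimensional $X_b$ --- satisfy the decomposability relations (the volume-form analogue of Massey triviality) required as input in \cite[Theorem 7.2]{RZ5}. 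Here the strictness condition plays the role that the injectivity of $\bigwedge^{n-1}L\otimes\omega_{B}\to f_*\omega_X$ plays in Theorem~\ref{B}: it guarantees that these forms are genuinely independent and span a system of the expected dimension, so that the induced map does not degenerate.

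With the hypotheses of the volume-form Castelnuovo--de Franchis theorem verified, I would apply \cite[Theorem 7.2]{RZ5} to the general fiber and obtain a dominant, generically finite rational map $g_b\colon X_b\dashrightarrow Y_b$ onto an $n-1$-dimensional variety of general type, under which the relevant $(n-1)$-forms are pulled back from (pluri)canonical forms on $Y_b$. The strictness notion of \cite{RZ5} is precisely what upgrades $g_b$ to a morphism and certifies that $\omega_{Y_b}$ is big, so that $Y_b$ is of general type; this is the fiberwise counterpart of the conclusion of Theorem~\ref{D}.

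The crucial and most delicate step is the globalization, namely to assemble the family $\{g_b\}_{b\in A}$ into a single rational map $g\colon f^{-1}(A)\dashrightarrow Y$ with one fixed target $Y$, so that $(g,f)$ yields the asserted generically finite dominant map $f^{-1}(A)\dashrightarrow Y\times B$. This is exactly where the hypothesis $f_*\hhom(\Omega^{n-1}_{X/B}(-\sD^{n-1}),\Omega^{n-2}_{X/B})=0$ enters. The point is that a nonzero relative homomorphism from $\Omega^{n-1}_{X/B}(-\sD^{n-1})$ to the next lower exterior power $\Omega^{n-2}_{X/B}$ is exactly what would allow the descended top forms to factor further, or to rotate into lower-degree forms as $b$ varies; its absence both prevents the fiberwise image from dropping dimension (so each $g_b$ stays generically finite onto an $n-1$-dimensional target) and rigidifies the construction, forcing $Y_b\cong Y$ to be independent of $b$ and the generating forms to patch into a subsheaf of $\Omega^{n-1}_{X/B}(-\sD^{n-1})$ pulled back from $Y$. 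Identifying this obstruction precisely with the stated $\hhom$ sheaf, and deducing from its vanishing an honest product target rather than a merely isotrivial family, is the main obstacle of the argument and the reason this theorem is harder than Theorem~\ref{B}, where the analogous gluing was governed by the much simpler requirement that $f_*\sO_X(\sD)$ be a line bundle.

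Finally I would record that the globalized map has all the stated properties. Since over the open set $A$ the rigidified family of targets trivializes, no étale base change of the kind needed in Theorem~\ref{B} is required here and one directly obtains the product $Y\times B$; generic finiteness of $(g,f)$ follows from the generic finiteness of each $g_b$ together with the fibration $f$ --- source and target both having dimension $n$ --- while dominance and the general type of $Y$ are inherited from the fiberwise statement. Removing the indeterminacy locus and shrinking $f^{-1}(A)$ to a suitable dense open subset, as in the first part of Theorem~\ref{A}, then completes the proof.
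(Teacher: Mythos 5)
There is a genuine gap: the step you yourself flag as ``the crucial and most delicate step'' --- assembling the fiberwise maps $g_b\colon X_b\dashrightarrow Y_b$ into a single map to a fixed $Y$ --- is never actually carried out, and the mechanism you propose for it misidentifies the role of the vanishing hypothesis. In the paper's proof the hypothesis $f_*\hhom(\Omega^{n-1}_{X/B}(-\sD^{n-1}),\Omega^{n-2}_{X/B})=0$ has nothing to do with rigidifying a family of targets or preventing the fiberwise image from dropping dimension. One applies $f_*\hhom(\,\cdot\,,f^*\omega_B\otimes\Omega^{n-2}_{X/B})$ to the twisted wedge sequences; the $n-1$-supportedness of $\rho(\xi)$ says exactly that the canonical injection $i\in f_*\hhom(\Omega^{n-2}_{X/B}(-\sD^{n-1}),\Omega^{n-2}_{X/B})$ lifts \emph{locally} through the map $\alpha$ from $f_*\hhom(\Omega^{n-1}_X(-\sD^{n-1}),\Omega^{n-2}_{X/B})\otimes\omega_B$, and the stated vanishing is precisely $\Ker\alpha=0$, which upgrades that local lifting to a \emph{global and unique} homomorphism $h$ on $A$. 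The uniqueness of $h$ is then used to correct the liftings, $\tilde{s}_i=s_i-h(s_i)$, and a local computation (namely $s_1\wedge s_2+h(s_1)\wedge h(s_2)=s_1\wedge h(s_2)-s_2\wedge h(s_1)$) gives $\tilde{s}_i\wedge\tilde{s}_j=0$ for all pairs, as sections of $\bigwedge^2\Omega^{n-1}_X$ on the \emph{total space}.

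This points to the second structural problem with your plan: the volume-form Castelnuovo--de Franchis theorem is applied in the paper exactly once, to the $n$-dimensional open variety $f^{-1}(A)$ with the closed $(n-1)$-forms $\tilde{s}_i$ (closedness substitutes for compactness, as in Corollary \ref{finito1}), producing $f^{-1}(A)\dashrightarrow Y$ directly; the map to $Y\times B$ is then just the product with $f$. Applying it fiberwise, as you propose, gains essentially nothing: on a smooth fiber $X_b$ the sheaf $\Omega^{n-1}_{X_b}$ is invertible, so the pairwise wedge condition in $\bigwedge^2\Omega^{n-1}_{X_b}$ is automatically satisfied and carries no information --- the nontrivial decomposability statement lives on $X$, not on $X_b$, and obtaining it is exactly what the operator $h$ (hence the supportedness plus the vanishing hypothesis) is for. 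Without that construction your outline reduces to: fiberwise maps exist, plus an unproven rigidity claim to glue them; the latter is the theorem's actual content and is missing.
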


See Theorem \ref{viceversap}. Regarding the vanishing hypothesis,
note that on the smooth fibers, this vanishing is equivalent to the vanishing of the cohomology group $H^0(X_b, T_{X_b}(D_b^{n-1}))$, where $D_b^{n-1}$ is the restriction of $\sD^{n-1}$ on $X_b$; this can essentially be seen as a hypothesis on the normal sheaf of $D_b^{n-1}$. In particular it can often be applied if $X_b$ is of general type and $D_b^{n-1}$ has negative self-intersection.
A global version of Theorem \ref{C} on a finite base change of $X$ can be found in Corollary \ref{globale}

%Our point of view we can be applied is some cases to the albanese map. For example we can show:
%
%\begin{thml}
%	\label{D}
%Let $X$ be a smooth $n$-dimensional variety and $\alpha\colon X\to A:=Alb(X)$ its Albanese morphism. Assume that $\sL:=\Ima(\alpha^*\Omega^{n-1}_A\to \Omega^{n-1}_X)$ is a line bundle on $X$, then the global sections of $\sL$ define a rational map $h\colon X\dashrightarrow Y$ to a variety $Y$ of general type. Furthermore if the sections of $H^0(X,\sL)$ satisfy the strictness condition, we can take $h$ to be a morphism and $Y$ is the Stein factorization of $X\to Z$ where $Z:=\alpha(X)$. Finally if the restrictions of the Albanese map to the fibers $X_b$ have degree $1$, then these fibers are birational \textcolor{red}{to $Y$}. 
%\end{thml}
%
%See Theorem \ref{abel} and following Corollary.

We give some bound on the geometric genus of $Y$ in the case of a relatively minimal fibered threefold $f\colon X\to B$ under some hypotheses on the canonical map $\phi_{|K_{X_b}|}$ of the general fiber $X_b$ following  \cite{Ri}.
\subsection{Other results}
Finally we point out that in Section \ref{sez2} we revise the theory of Massey products according a new perspective and this lead us  to show, in Section \ref{sez3}, a relative version of our old theorem on adjoint quadrics \cite[Theorem B]{RZ1}. Indeed we think that Theorem \ref{reladjquadthm} has its own interest as a criterion for Massey triviality and as a tool to show finiteness  results on certain monodromy groups, see Corollary \ref{cornag}.

\begin{ackn}
The first author has been supported by the IBS Center for Complex Geometry and by European Union funds, NextGenerationEU. The second author has been supported by the grant DIMA Geometry PRIDZUCC and by PRIN 2017 Prot. 2017JTLHJR \lq\lq Geometric, algebraic and analytic methods in arithmetics\rq\rq.
\end{ackn}

\section{Massey products and local systems}
\label{sez2}
In this section we briefly recall and discuss the main constructions of \cite{RZ4}, in particular we give the rigorous definition of the vector bundle $K_\partial$, the local system $\mD^1$ and the notion of Massey triviality mentioned in the Introduction.
\subsection{Local systems of certain liftable holomorphic forms}

Let $X$ be a smooth complex compact $n$-dimensional variety and $B$ a smooth complex curve.
From the Introduction we recall that we consider semistable fibrations $f\colon X\to B$ where  $X_b=f^{-1}(b)$ denotes the fiber over a point $b\in B$. All the fibers $X_b$ are either smooth or reduced and normal crossing divisors.
Let $B_0$ be the locus of singular values of $f$ and $B^0 = B \setminus B_0$ the open set of regular values. Consider the exact sequence 
\begin{equation}
	\label{omegarel}
	0\to f^*\omega_B\to \Omega^1_{X}\to \Omega^1_{X/B}\to0
\end{equation} defining the sheaf of relative differentials $\Omega^1_{X/B}$. It is not difficult to see that, under our hypothesis on $f$, the sheaf $\Omega^1_{X/B}$ is torsion free but not locally free in general. 

The $p$-wedge of Sequence (\ref{omegarel}) is 
\begin{equation}
	\label{omegarelp}
	0\to f^*\omega_B\otimes \Omega^{p-1}_{X/B}\to \Omega^p_{X}\to \Omega^p_{X/B}\to0
\end{equation} which is also exact for $f$ semistable. Note that for $p=n-1$, the sheaf $\Omega^{n-1}_{X/B}$ injects into the relative dualizing sheaf $\omega_{X/B}:=\omega_X\otimes f^*T_B=\omega_X\otimes f^*\omega_B^\vee$; their direct images via $f$ are isomorphic when restricted to $B^0$.

Taking the pushforward of Sequence (\ref{omegarel}) we obtain the long exact sequence on $B$
\begin{equation}
0\to \omega_B\to f_*\Omega^1_X\to f_*\Omega^1_{X/B}\to R^1f_*\sO_X\otimes \omega_B\to \dots
\end{equation} and we call $K_\partial$ the cokernel in the exact sequence
\begin{equation}
	\label{kdelta}
	0\to \omega_B\to f_*\Omega^1_X\to K_\partial\to 0.
\end{equation} 
 Intuitively we can think of $K_\partial$ as the vector bundle of holomorphic 1-forms on the fibers of $f$ which are locally liftable to the variety $X$.
A key property of $K_\partial$ is given in the following Lemma, see  \cite[Lemma 3.5]{PT} or \cite[Lemma 2.2]{RZ4}.
\begin{lem}
	\label{splitlemma}
	If $f\colon X\to B$ is a semistable fibration, the exact sequence
	\begin{equation}
		0\to \omega_B\to f_*\Omega^1_X\to K_\partial\to 0
	\end{equation} splits.
\end{lem}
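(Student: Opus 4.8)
The plan is to realize the splitting as the vanishing of an extension class and to produce that vanishing from the Hodge theory of the total space $X$. First I would record that all three terms are vector bundles on the smooth curve $B$: the sheaf $f_*\Omega^1_X$ is torsion free (as the direct image of a torsion free sheaf under a dominant morphism), hence locally free; over $B^0$ the cokernel $K_\partial$ injects into $f_*\Omega^1_{X/B}$ as the kernel of the connecting map to $R^1f_*\sO_X\otimes\omega_B$, so it too is torsion free, and therefore the inclusion $\omega_B\hookrightarrow f_*\Omega^1_X$ is a subbundle. With this in place, the splitting of (\ref{kdelta}) is equivalent to the vanishing of the extension class
$$
\epsilon\in\mathrm{Ext}^1_{\sO_B}(K_\partial,\omega_B)=H^1(B,\hhom(K_\partial,\omega_B)),
$$
so my goal becomes to exhibit a sheaf-theoretic section $K_\partial\to f_*\Omega^1_X$ of the surjection in (\ref{kdelta}).

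The heart of the argument is Hodge-theoretic, and I would first establish the splitting over the regular locus $B^0$, where $f$ is smooth and projective. There the pushforward sequence (\ref{kdelta}) is the degree-$(1,0)$ part of the two-step Leray filtration on the Gauss--Manin bundle $R^1f_*\mathbb{C}\otimes\sO_{B^0}$: the subbundle $\omega_B$ records the contribution of the base, $f_*\Omega^1_{X/B}$ is the Hodge subbundle $F^1$ of the fibres, and $K_\partial$ is cut out inside $F^1$ as the kernel $\ker(\bar\nabla|_{F^1})$ of the Gauss--Manin connection composed with $F^1\hookrightarrow\mathcal{H}\to\mathcal{H}/F^1$. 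The decisive input is the degeneration of the Hodge--de Rham spectral sequence of the compact Kähler manifold $X$ --- equivalently, the classical fact that holomorphic $1$-forms on $X$ are $d$-closed --- together with Deligne's $E_2$-degeneration of the Leray spectral sequence of $f$. These force the relevant differentials to vanish, so that the Leray filtration splits compatibly with the Hodge filtration; restricting that splitting to the $(1,0)$-part yields a holomorphic section $K_\partial\to f_*\Omega^1_X$ over $B^0$. Concretely, a local section of $K_\partial$ is the fibrewise restriction of a closed holomorphic $1$-form on $f^{-1}(U)$ whose relative de Rham class is flat for the Gauss--Manin connection, and the splitting selects the distinguished such lift.

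The main obstacle, and the place where semistability is essential, is to propagate this splitting across the discriminant $B_0$, where $\Omega^1_{X/B}$ fails to be locally free and the naive Gauss--Manin picture degenerates. Here I would invoke the good behaviour of the canonical (Deligne) extension of the variation of Hodge structure under semistable reduction, together with Fujita-type results identifying $f_*\Omega^1_X$ and $f_*\Omega^1_{X/B}$ with the Hodge bundles of the extended variation; these guarantee that the section built over $B^0$ extends to a sheaf morphism over all of $B$ still splitting (\ref{kdelta}). Since $B$ is a curve, it suffices to check that $\epsilon$ acquires no residue at the finitely many points of $B_0$, which the unipotent local monodromy forced by semistability provides. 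As a cross-check I would compare with the explicit splittings of \cite[Lemma 3.5]{PT} and \cite[Lemma 2.2]{RZ4}, whose constructions follow exactly this pattern.
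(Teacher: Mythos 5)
Your reduction of the splitting to the vanishing of the extension class $\epsilon\in\mathrm{Ext}^1(K_\partial,\omega_B)\cong H^1(B,\hhom(K_\partial,\omega_B))$ is the right first move, but neither stage of your Hodge-theoretic plan actually kills this class, and the two-stage architecture itself misplaces the obstruction. First, degeneration statements (Hodge--de Rham for $X$, Deligne's $E_2$-degeneration of the Leray spectral sequence) concern global cohomology groups; they do not produce holomorphic splittings of filtrations of vector bundles over $B^0$. Vanishing of differentials never splits a filtration: the basic counterexample is the Hodge subbundle $F^1\subset R^1f_*\mC\otimes\sO_{B^0}$ itself, which is holomorphically split essentially only for isotrivial families, even though both degenerations always hold. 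Second, if $B_0\neq\emptyset$ then $B^0$ is a Stein curve, so $H^1(B^0,\hhom(K_\partial,\omega_B))=0$ and a splitting over $B^0$ exists for trivial reasons; it carries no information about $\epsilon$, which is a global class on the compact curve $B$ and is not computed by local data (``residues'') near $B_0$ --- there is nothing for unipotent monodromy or the Deligne canonical extension to kill, and a splitting over $B^0$ in general simply fails to extend. If instead $B_0=\emptyset$, your second stage is vacuous and your first stage is exactly the unproven assertion. In short, the vanishing of $\epsilon$ is never established. (Two smaller points: you verify torsion-freeness of $K_\partial$ only over $B^0$, whereas you need it on all of $B$; it holds because $K_\partial\hookrightarrow f_*\Omega^1_{X/B}$ globally and $\Omega^1_{X/B}$ is torsion free by semistability, which is where the hypothesis genuinely enters. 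Also, describing sections of $K_\partial$ as restrictions of closed forms with flat relative class conflates $K_\partial$ with the generally smaller subsheaf $\mD^1\otimes\sO_B$.)

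The proof the paper appeals to (see \cite[Lemma 3.5]{PT} and \cite[Lemma 2.2]{RZ4}) works at the opposite end: it attacks the global class directly and is essentially elementary. Since $K_\partial$ is locally free on the curve, Serre duality gives a perfect pairing between $H^1(B,\hhom(K_\partial,\omega_B))$ and $H^0(B,K_\partial)$, under which pairing with $\epsilon$ is the connecting homomorphism $\delta\colon H^0(B,K_\partial)\to H^1(B,\omega_B)\cong\mC$; hence the sequence splits if and only if every global section of $K_\partial$ lifts to a global holomorphic $1$-form on $X$. Given $\eta\in H^0(B,K_\partial)$, choose local liftings $\eta_i\in H^0(f^{-1}(U_i),\Omega^1_X)$; then $\eta_i-\eta_j=f^*\sigma_{ij}$ with $\{\sigma_{ij}\}$ a \v{C}ech representative of $\delta(\eta)$. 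The pullback $f^*\colon H^1(B,\omega_B)\to H^1(X,\Omega^1_X)$ is injective (the generator maps to the class of a fiber, which pairs nontrivially with a Kähler class), while $\{f^*\sigma_{ij}\}=\{\eta_i-\eta_j\}$ is by construction a coboundary for the sheaf $\Omega^1_X$, so $\delta(\eta)=0$. No canonical extensions or Leray degeneration are needed; if you want to salvage your outline, this duality step is the missing idea that converts your local considerations into the required global vanishing.
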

This means that the above intuitive idea is not only true locally around the fibers, but the liftability holds on every open subset of $B$. For a more complete study of $K_\partial$ see \cite{PT}, \cite{GST}, \cite{GT} for the case $n=2$, \cite{RZ4} for the general case.

If in Sequence (\ref{kdelta}) we consider, instead of $\Omega^1_X$, the sheaf $\Omega^1_{X,d}$ of de Rham closed differential forms, we obtain the exact sequence 
	\begin{equation}
		\label{seqd1}
	0\to \omega_B\to f_*\Omega^1_{X,d}\to \mD^1\to 0.
\end{equation} It turns out that $\mD^1$ is a local system on the curve $B$ as shown in \cite{PT} for 1-dimensional fibers and in \cite{RZ4} for any dimension. Note that $\mD^1$ is a subsheaf of $K_\partial$ and we can interpret $\mD^1$ as the local system of holomorphic 1-forms on the fibers of $f$ which are liftable
to \textit{closed} holomorphic forms of the variety $X$. Finally note that, by Lemma \ref{splitlemma},  also the exact Sequence (\ref{seqd1}) splits.

One can define the vector bundles $K_\partial^p$ and the local systems $\mD^p$ in a similar way. 
Take the direct image via $f_*$ of the exact sequence (\ref{omegarelp}) and obtain the long exact sequence 
 \begin{equation}
 	0\to \omega_B\otimes f_*\Omega^{p-1}_{X/B}\to f_*\Omega^p_X\to f_*\Omega^p_{X/B}\to R^1f_*\Omega^{p-1}_{X/B}\otimes \omega_B\to \dots
 \end{equation} We define $K_\partial^p$ and $\mD^p$ respectively by
 \begin{equation}
	\label{seqkp}
	0\to \omega_B\otimes f_*\Omega^{p-1}_{X/B}\to f_*\Omega^p_{X}\to K_\partial^p\to0
\end{equation} and
 \begin{equation}
 	\label{seqdp}
	0\to \omega_B\otimes f_*\Omega^{p-1}_{X/B}\to f_*\Omega^p_{X,d}\to \mD^p\to0
\end{equation} where once again $\Omega^p_{X,d}$ denotes the sheaf of de Rham closed $p$-forms.
As in the case of $p=1$, the sheaves $\mD^p$ are local systems  of holomorphic $p$-forms on the fibers of $f$ which are locally liftable
to \textit{closed} holomorphic $p$-forms on $X$. 
\begin{rmk}
	\label{solle}
	A difference between the case $p=1$ and $p>1$ is that while Sequences (\ref{kdelta}) and (\ref{seqd1})  split, Sequences (\ref{seqkp}) and (\ref{seqdp}) in general do not.
\end{rmk}

By \cite{RZ4}, the local system $\mD^{n-1}$ is of particular interest because it turns out to be the local system of the second Fujita decomposition of $f_*\omega_{X/B}$. In fact, by a famous result of Fujita, see \cite{Fu} and \cite{Fu2}, \cite{CD1}, \cite{CD2}, the direct image $f_*\omega_{X/B}$ is a sum
\begin{equation}
\label{seconda}
	f_*\omega_{X/B}\cong \sU\oplus \sA
\end{equation} where $\sU$ is a unitary flat vector bundle and $\sA$ is ample. It turns out that $\sU=\mD^{n-1}\otimes\sO_B$, see \cite[Theorem 3.7]{RZ4} for the proof. Some analysis on the rank of $\sU$ is in \cite{GT}, \cite{GST} if $X$ is a surface or in \cite{Ri} if $X$ is a threefold, see also Section \ref{sez5} for some applications.

\subsection{Massey products}

Massey products, originally called adjoint forms, have been introduced in \cite{CP} and \cite{PZ}. They have been useful for the study of infinitesimal deformations and also for the study of the monodromy of the above mentioned local systems.

We now recall their construction. This presentation is slightly different but equivalent to the one in \cite{RZ4}, and it will be more convenient for the applications contained in this paper.

According to Lemma \ref{splitlemma}, Sequence (\ref{kdelta}) splits; from now on for simplicity we choose and fix one of these splittings. The following wedge sequence also splits
\begin{equation}
	\label{split2}
	\xymatrix{
		0\ar[r]& \bigwedge^{n-1} K_\partial\otimes \omega_B\ar[r]&\bigwedge^n f_*\Omega^1_X\ar[r]& \bigwedge^n K_{\partial}\ar[r]\ar @/_1.6pc/ [l]_{} & 0
	}
\end{equation}
and we take the composition of this splitting
%$$
%\bigwedge^n K_\partial\to \bigwedge^nf_*\Omega^1_X
%$$ 
with the natural wedge map and obtain the morphism
\begin{equation}
	\label{agg}
	\lambda\colon \bigwedge^n K_{\partial}\to \bigwedge^n f_*\Omega^1_X\to f_*\bigwedge^n\Omega^1_X=f_*\omega_X. 
\end{equation}

Now consider  $n$ sections $\eta_1,\ldots,\eta_n\in \Gamma(A,K_{\partial})$ on an open subset $A\subseteq B$; call $s_1,\dots,s_n\in \Gamma(A,f_*\Omega^1_{X})$ liftings of $\eta_1,\dots,\eta_n$ according to the above chosen splitting.

\begin{defn}
	\label{omegai}
	We call $\omega_i$, $i=1,\ldots,n$, the wedge $s_1\wedge\dots\wedge\widehat{s_i}\wedge \dots\wedge s_n\in \Gamma(A,f_*\Omega^{n-1}_X)$ and $\sW$ the submodule of $f_*\omega_X$ generated by $\langle\omega_i\rangle\otimes \omega_B$.
\end{defn}
\begin{defn}
	\label{mtrivial}
	The \emph{Massey product or adjoint image} of $\eta_1,\ldots,\eta_n$ is the section of $f_*\omega_{X}$ computed by $\lambda(\eta_1\wedge...\wedge\eta_n)$. We say that the sections $\eta_1,\ldots,\eta_n$  are \emph{Massey trivial} if their Massey product is contained in the submodule $\sW$.
\end{defn}

\begin{rmk}
	\label{remlif}
	The Massey product is given explicitly by $s_1\wedge\dots\wedge s_n$ and being Massey trivial means that locally
	$$
	s_1\wedge\dots\wedge s_n=\sum_i \omega_i\otimes \sigma_i
	$$ where the $\omega_i$ are as in Definition \ref{omegai} and $\sigma_i$ are local sections of $\omega_B$.
	
	As a section of $f_*\omega_{X}$, the Massey product certainly depends on the choice of the splitting mentioned above. On the other hand, the condition of being Massey trivial does not; see \cite{RZ4}. In Proposition \ref{localglobal} we will show that if the sections $\eta_1,\ldots,\eta_n$  are Massey trivial, there is a very convenient choice for this splitting.
\end{rmk}

In the literature mentioned at the beginning, the construction of Massey products is done pointwise, that is for a fixed regular value $b\in B$ and working on the fiber $X_b$ and on an infinitesimal neighbourhood of this fiber.
It is not difficult to see that all the pointwise defined Massey products can be glued together and this agrees exactly with Definition \ref{mtrivial} on suitable open subsets $A\subset B$.

%, but the first is more general and is more convenient to take care of the critical values of the map $f$.

Of course since $\mD^1$ is a subsheaf of $K_{\partial}$, it makes sense to construct Massey products starting from sections of $\mD^1$, i.e. consider sections $\eta_i\in \Gamma(A,\mD^1)$. One of the key points in \cite{PT} and \cite{RZ4} is exactly to consider this setting.

%On the other hand note that the map (\ref{wedge}) restricted on $\mD$ has image in $\mD^{n-1}$, that is
%\begin{equation}
%	\bigwedge^{n-1}\mD\to \mD^{n-1}
%\end{equation} because the wedge product of forms which can be lifted to closed holomorphic forms can also be lifted to a closed holomorphic form.

To conclude this section we recall the notion of strictness and its relation with Massey triviality.
Let $A\subseteq B$ be a contractible open subset and $W\leq \Gamma(A, K_{\partial})$ a vector subspace of dimension at least $n$.

We give the following definition
\begin{defn}
	\label{mastriv}
	We say that $W$ is Massey trivial if any $n$-uple of sections in $W$ is Massey trivial (according to  Definition \ref{mtrivial}).
\end{defn}

\begin{defn}
	\label{strict}
	We say that $W$ is strict if the map 
	$$
	\bigwedge^{n-1}W\otimes \omega_{B_{|A}}\to f_*\omega_{X_{|A}}
	$$ is an injection of sheaves.
\end{defn}

The following proposition shows how Massey triviality and strictness give a preferred choice of liftings as we anticipated in Remark \ref{remlif}.

\begin{prop}
	\label{localglobal}
	Let $W\leq \Gamma(B, K_{\partial})$ be a strict subspace of global sections of $K_{\partial}$ and let $A\subseteq B$ be an open contractible subset. If the sections of $W$ are Massey trivial when restricted to $A$ then there exist a unique lifting $\widetilde{W}\leq \Gamma(B, f_*\Omega^1_X)$ such that 
	$$
	\bigwedge^{n}\widetilde{W}\to \Gamma(B,f_*\omega_{X})
	$$ is zero. If furthermore $W\leq \Gamma(B, \mD^1)$ then $\widetilde{W}\leq \Gamma(B, f_*\Omega^1_{X,d})$.
\end{prop}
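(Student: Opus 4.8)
The plan is to realize the desired $\widetilde{W}$ as a correction of the fixed standard lifting and to collapse the construction to a single linear-algebra (Koszul) equation that strictness makes uniquely solvable. Fix a basis $\eta_1,\dots,\eta_r$ of $W$ (so $r=\dim W\ge n$) and let $s_1,\dots,s_r\in\Gamma(B,f_*\Omega^1_X)$ be the liftings determined by the splitting of Lemma \ref{splitlemma}. Every lifting of $W$ is of the form $\tilde s_i=s_i+\phi(\eta_i)$ for a unique linear $\phi\colon W\to\Gamma(A,\omega_B)$, where $\omega_B\hookrightarrow f_*\Omega^1_X$ via Sequence (\ref{kdelta}). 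Since two local sections of $\omega_B$ wedge to zero, expanding a corrected $n$-wedge kills every term with at least two corrections and leaves, for each $n$-subset $I=\{i_1<\cdots<i_n\}$,
\[
\tilde s_{i_1}\wedge\cdots\wedge\tilde s_{i_n}
= s_{i_1}\wedge\cdots\wedge s_{i_n}
+\sum_{k=1}^{n}(-1)^{k-1}\,\phi(\eta_{i_k})\wedge\omega^I_k,
\]
where $\omega^I_k=s_{i_1}\wedge\cdots\widehat{s_{i_k}}\cdots\wedge s_{i_n}$. Writing $m\colon\bigwedge^{n-1}W\otimes\omega_B\to f_*\omega_X$ for the strictness map, which is injective by Definition \ref{strict}, both summands lie in $\Ima m$: the first because $W$ is Massey trivial on $A$ (Remark \ref{remlif}), the second by construction. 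Injectivity of $m$ lets me transport everything into $\bigwedge^{\bullet}W\otimes\omega_B$: Massey triviality defines a linear map $\bar\lambda:=m^{-1}\circ\lambda\colon\bigwedge^nW\to\bigwedge^{n-1}W\otimes\omega_B$ over $A$, and the vanishing of all corrected $n$-wedges becomes the single equation $\iota_\phi=-\bar\lambda$, where $\iota_\phi(e_I)=\sum_k(-1)^{k-1}\phi(\eta_{i_k})\,e_{I\setminus i_k}$ is Koszul contraction and $e_I=\eta_{i_1}\wedge\cdots\wedge\eta_{i_n}$. Uniqueness of $\widetilde{W}$ is then immediate, since $\iota_\phi=0$ forces each $\phi(\eta_i)=0$ (read off the component along $e_{I\setminus i}$), using $r\ge n$.

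It remains to solve $\iota_\phi=-\bar\lambda$, i.e. to show that $-\bar\lambda$ is a contraction, and I would argue by induction on $r$. For $r=n$ there is a single $e_I$ and one sets $\phi(\eta_{i_k})$ equal to the signed component of $\bar\lambda(e_I)$ along $e_{I\setminus i_k}$, which is forced and well defined. For $r>n$ every $n$-subset omits some index $j$, hence lies in the coordinate subspace $W_j:=\langle\eta_i:i\neq j\rangle$; by induction there is a unique correction $\phi_j$ on each $W_j$, and it suffices to glue the $\phi_j$ into one $\phi$ on $W=\sum_jW_j$. When $r\ge n+2$ the overlaps $W_j\cap W_{j'}$ still have dimension $\ge n$, so the induction uniqueness makes $\phi_j$ and $\phi_{j'}$ agree there and they glue automatically.

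The whole difficulty is therefore concentrated in the case $r=n+1$, which I expect to be the main obstacle: here the overlaps $W_j\cap W_{j'}$ have dimension $n-1$ and carry no $n$-wedge to force agreement, so the two candidate values of each $\phi(\eta_i)$ must be reconciled by hand. I would exploit the relation $s_1\wedge\cdots\wedge s_{n+1}=0$, valid because $\dim X=n$ forces $\Omega^{n+1}_X=0$: wedging the Massey decomposition of $s_{I_j}$ by the missing form $s_j$ and re-applying injectivity of $m$ produces exactly the compatibility of these candidate values, yielding a single $\phi$ that annihilates every $n$-wedge, hence the sought $\widetilde{W}$ over $A$.

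Two points then globalize the result. First, $f_*\omega_X$ is torsion free (it is the pushforward of the line bundle $\omega_X$ along the dominant $f$), hence locally free on the smooth curve $B$; so a global section vanishing on the nonempty open $A$ vanishes on all of $B$ by the identity theorem, and once the $\tilde s_i$ are global the map $\bigwedge^n\widetilde{W}\to\Gamma(B,f_*\omega_X)$ is forced to be zero everywhere. For the corrections themselves, $\phi$ is the unique solution of a linear system whose coefficients $s_i$ and $m$ are defined on all of $B$; strictness keeps $m^{-1}$ holomorphic, so the uniquely determined $\phi(\eta_i)$ extend across $B\setminus A$ to sections of $\omega_B$ over $B$ (this extension across the non-Massey-trivial locus is the secondary technical point). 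Finally, for the refinement $W\le\Gamma(B,\mD^1)$ I take the $s_i$ in $\Gamma(B,f_*\Omega^1_{X,d})$ using the splitting of Sequence (\ref{seqd1}); each $\phi(\eta_i)\in H^0(B,\omega_B)$ is a holomorphic $1$-form on the curve, hence closed, so its image in $f_*\Omega^1_X$ is closed and the $\tilde s_i$ are closed. By the uniqueness already established this closed lifting is precisely $\widetilde{W}$, which therefore lies in $\Gamma(B,f_*\Omega^1_{X,d})$.
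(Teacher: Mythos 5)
Your reduction of the problem to the Koszul equation $\iota_\phi=-\bar\lambda$, the uniqueness argument, and the gluing for $\dim W\geq n+2$ are all sound, and you correctly isolate where the difficulty sits; but the one case you defer, $r=n+1$, is precisely the heart of the proposition, and the mechanism you propose for it is vacuous. The Massey decomposition $s_{i_1}\wedge\cdots\wedge s_{i_n}=\sum_k\pm\,\omega^{I_j}_k\wedge\sigma^j_k$ is an identity in $f_*\Omega^n_X=f_*\omega_X$; wedging it with the missing form $s_j$ lands in $f_*\Omega^{n+1}_X$, which is identically zero because $\dim X=n$. So both sides vanish term by term for \emph{any} choice of the $\sigma^j_k$: the relation $s_1\wedge\cdots\wedge s_{n+1}=0$ is automatic rather than informative, and ``re-applying injectivity of $m$'' happens in a degree where $m$ says nothing. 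No compatibility among the candidate corrections $\phi_j,\phi_{j'}$ is produced. (Nor can you wedge inside the exterior algebra of the module $\Gamma(A,f_*\Omega^1_X)$, where $s_1\wedge\cdots\wedge s_{n+1}$ need not vanish, because the Massey relation only holds after applying the wedge map to $f_*\omega_X$.) This reconciliation is exactly the point where strictness with $\dim W>n$ must do genuine work --- compare the remark following the proposition, which stresses that strictness is essential precisely when $\dim W>n$ --- and note that the paper itself does not reprove the statement but cites \cite[Proposition 4.10]{RZ4}, where this compatibility is the substance of the argument; your proposal leaves it unproved.

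The globalization step also has a gap. Torsion-freeness of $f_*\omega_X$ does give that the vanishing of the corrected $n$-wedges propagates from $A$ to all of $B$ \emph{once the liftings are global}; but producing global liftings requires extending the corrections $\phi(\eta_i)\in\Gamma(A,\omega_B)$ across $B\setminus A$, and injectivity of the strictness map $m$ does not make its image a saturated subsheaf of $f_*\omega_X$: the quotient $f_*\omega_X/\Ima m$ may have torsion supported on $B\setminus A$, in which case a global Massey product lying in $\Ima m$ over $A$ need not lie in it over $B$, and the ``uniquely determined'' $\phi(\eta_i)$ would a priori extend only meromorphically. The phrase ``strictness keeps $m^{-1}$ holomorphic'' is an assertion, not an argument. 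As it stands, the proposal establishes uniqueness and the cases it claims to reduce to, but not the proposition.
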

For the proof see \cite[Proposition 4.10]{RZ4}.
If the sections $\eta_i\in W$ are Massey trivial, for any choice of liftings $s_i$ we have a relation of the form 
$$
s_1\wedge\dots\wedge s_n=\sum_i \omega_i\otimes \sigma_i
$$ as seen in Remark \ref{remlif}. This proposition tells us that actually there is a preferred choice of liftings $\tilde{s}_i$ such that $$\tilde{s}_1\wedge\dots\wedge\tilde{s}_n=0.$$ It also tells us that local Massey triviality implies global Massey triviality.

\begin{rmk}
	We stress that the strictness condition is essential to prove Proposition \ref{localglobal} if $\dim W>n$.
	We also recall that given an $n$-dimensional variety $Y$, a subspace $W\leq H^0(Y,\Omega^1_Y)$ is usually called strict if the natural map  $\bigwedge^n W\to H^0(Y,\omega_Y)$ is non zero on decomposable elements. See \cite[Definition 2.1 and 2.2]{Ca2}. If the map $\bigwedge^n W\to H^0(Y,\omega_Y)$ is an isomorphism on the image, we have the notion of strongly strict, see \cite[Definition 2.2.1]{RZ1}. 
	%	If $W$ as above is a subspace of sections of $\mD$, $W\subset\Gamma(A,\mD)\subset\Gamma(A,K_\partial)$, then we can see $W$ as a subspace of $H^0(F_b,\Omega^1_{F_b})$ since $\mD$ is a local system. If $W$ is strict in the usual sense then it is strict according to Definition (\ref{strict}). In fact if $W$ is strict in the usual sense we have the injection $\bigwedge^{n-1}W\hookrightarrow \mD^{n-1}$ and taking the tensor product by $\sO_A$ we immediately get the desired injection $$\bigwedge^{n-1}W\otimes \sO_A\hookrightarrow \mD^{n-1}\otimes \sO_A\hookrightarrow f_*\omega_{X/B_{|A}}.$$
\end{rmk}

\section{Relative Adjoint quadrics}
\label{sez3}
As a natural continuation of \cite{RZ4}, in this section we study the generalization of the notion of adjoint quadrics, introduced in \cite{RZ1}. As we will see, the presence or absence of certain quadratic relations is strictly related to the notion of Massey triviality.

In the following, consider as before an open subset $A\subseteq B$ and $\eta_1,\dots,\eta_n$ a basis of an $n$-dimensional vector space $W\leq\Gamma(A,\mD^1)$. Choosing a splitting of 
\begin{equation}
	\label{split}
	\xymatrix{
		0\ar[r]&  \omega_B\ar[r]&f_*\Omega^1_{X,d}\ar[r]& \mD^1\ar[r]\ar @/_1.2pc/ [l]_{} & 0
	}
\end{equation}
%\begin{equation}
%	\label{split}
%	0\to \omega_B\to f_*\Omega^1_{X,d}\to \mD^1\to0
%\end{equation}
and $s_1,\dots,s_n\in \Gamma(A,f_*\Omega^1_{X,d})$ liftings of $\eta_1,\dots,\eta_n$ accordingly, we denote by $\omega$ the Massey product of the $\eta_i$. With our choice of liftings, $\omega$ is explicitly given by $s_1\wedge\dots\wedge s_n$. Also recall that by definition $\omega_i:=s_1\wedge\dots\wedge\widehat{s_i}\wedge\dots\wedge s_n.$
We have the following definition
\begin{defn}
	\label{qa}
	A \textit{relative adjoint quadric} is a local quadratic relation of sections of $f_*\omega_{X}\otimes f_*\omega_X$ of the form 
	$$
	\omega^2=\sum (\omega_i\wedge\sigma_i)\cdot \rho_i
	$$ where $\sigma_i$ are local sections of $\omega_B$, $\rho_i$ of  $f_*\omega_{X}$ and $\omega$, $\omega_i$ are as above.
\end{defn}

To study the role of these relations we construct a commutative diagram as follows.
Take the exact sequence 
$$
0\to f^*\omega_B\to \Omega^1_X\to \Omega^1_{X/B}\to 0
$$ and recall that its top wedge 
$$
0\to \Omega^{n-2}_{X/B}\otimes f^*\omega_B\to \Omega^{n-1}_X\to \Omega^{n-1}_{X/B}\to 0
$$
is exact. The tensor product with $f^*\omega_B$ followed by the direct image $f_*$ gives the long exact sequence of sheaves on $B$
\begin{equation}
	0\to f_*\Omega^{n-2}_{X/B}\otimes\omega_B^{\otimes 2} \to  f_*\Omega^{n-1}_X\otimes\omega_B\to  f_*\Omega^{n-1}_{X/B}\otimes\omega_B\to \dots
\end{equation} By the inclusion $f_*\Omega^{n-1}_{X/B}\hookrightarrow f_*\omega_{X/B}$ (which is an isomorphism on $B^0$) we actually obtain an inclusion $ f_*\Omega^{n-1}_{X/B}\otimes\omega_B\hookrightarrow  f_*\omega_{X/B}\otimes\omega_B=f_*\omega_X$ hence we can add the diagonal morphism 
\begin{equation}
	\xymatrix{
0\ar[r]  &  f_*\Omega^{n-2}_{X/B}\otimes\omega_B^{\otimes 2}\ar[r]&  f_*\Omega^{n-1}_X\otimes\omega_B\ar[r]&  f_*\Omega^{n-1}_{X/B}\otimes\omega_B\ar[r]\ar@{^{(}->}[rd]& \dots\\
&&&&f_*\omega_X
}
\end{equation}

We complete the diagram on $A$ with the following second row and appropriate morphisms
\begin{equation}
	\label{diagrammaquadriche}
	\xymatrix{
		0\ar[r] &  f_*\Omega^{n-2}_{X/B}\otimes\omega_B^{\otimes 2}\ar[r]\ar^{\psi}[dd] &  f_*\Omega^{n-1}_X\otimes\omega_B\ar[r]\ar^{\phi_\omega}[dd] &  f_*\Omega^{n-1}_{X/B}\otimes\omega_B\ar[dd] \ar[r]\ar@{^{(}->}[rd]& \dots\\
		&&&&f_*\omega_X\ar^{\cdot\omega}[dl]\\
		0\ar[r]  & K\ar[r]& \bigwedge^{n-1} \mD^1\otimes \omega_B\otimes f_*\omega_X\ar^-{\nu\otimes id}[r]& f_*\omega_X\otimes f_*\omega_X\ar[r]& \dots\\
	}
\end{equation} The maps above are defined as follows.

Firstly $\cdot\omega$ is just the multiplication by the Massey product $\omega$, sending a section $\tau$ of $f_*\omega_X$ to $\tau\cdot\omega$.

The map $\nu\colon \bigwedge^{n-1} \mD^1\otimes \omega_B\to f_*\omega_X$ is given by taking $n-1$ sections of $\mD^1$, call them $\mu_1,\dots,\mu_{n-1}$, liftings of these sections, $t_1,\dots,t_{n-1}$, according to our fixed splitting of Sequence (\ref{split}) and defining $\nu(\mu_1\wedge\dots\wedge\mu_{n-1}\otimes \sigma)=t_1\wedge\dots\wedge t_{n-1}\wedge\sigma$ for $\sigma$ in $\omega_B$. In particular note that
\begin{equation}
\nu(\eta_1\wedge\dots\wedge\widehat{\eta_i}\wedge\dots\wedge\eta_{n}\otimes \sigma)=\omega_i\wedge\sigma.
\label{nu}
\end{equation} 

Finally $\phi_\omega$ is given by the aforementioned liftings $s_1,\dots,s_n$ as follows. Locally, given a section $s$ of $f_*\Omega^{n-1}_X$, the image $\phi_\omega(s\otimes \sigma)$ is
\begin{equation}
\phi_\omega(s\otimes \sigma)=\sum_i (-1)^i \eta_1\wedge\dots\wedge\widehat{\eta_i}\wedge\dots\wedge\eta_{n}\otimes\sigma\otimes  s\wedge s_i.
\label{phi}
\end{equation}

We define $K$ as the kernel of $\nu\otimes id$ and it is easy to see that $\phi_\omega$ restricts to a map $\psi\colon f_*\Omega^{n-2}_{X/B}\otimes\omega_B^{\otimes 2}\to K$ and that the above diagram is commutative.

\begin{defn}
We say that the Massey product $\omega\in\Gamma(A,f_*\omega_X)$ is locally liftable if it is in the image of the sheaf morphism
$$
f_*\Omega^{n-1}_X\otimes \omega_B\to f_*\omega_X
$$ of Diagram \ref{diagrammaquadriche}.
% there exists an open covering of $A=\bigcup A_\alpha$ and liftings of $\omega_{|A_\alpha}$ in $\Gamma(A_\alpha,f_*\Omega^{n-1}_X\otimes \omega_B)$ via the map 
%$$
%f_*\Omega^{n-1}_X\otimes \omega_B\to f_*\omega_X
%$$ of Diagram \ref{diagrammaquadriche}.
\end{defn}

\begin{rmk}
	Note that in particular $\omega$ is locally liftable if it is an element of $\mD^{n-1}\otimes \omega_B\subset f_*\omega_X$ since the sections of $\mD^{n-1}$ are locally liftable to $f_*\Omega_{X,d}^{n-1}$.  This means that this theory is well suited to approach the natural question of what happens when the Massey product of sections $\eta_i\in \Gamma(A,\mD^1)$ ends up in $\Gamma(A,\mD^{n-1}\otimes \omega_B)$, that is in the part of $f_*\omega_X$ given by the local system of the Fujita decomposition.
\end{rmk}

The generalization of  \cite[Theorem 2.1.2]{RZ1} is:
\begin{thm}
	\label{reladjquadthm}
	Let $f\colon X\to B$ be a semistable fibration. If the Massey product $\omega$ is locally liftable and there are no relative adjoint quadrics, then the sections $\eta_i$ are Massey trivial.
\end{thm}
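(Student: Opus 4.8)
The plan is to run a relative Castelnuovo--de Franchis argument by chasing Diagram \ref{diagrammaquadriche}. What must be produced is a local relation $\omega=\sum_i\omega_i\wedge\sigma_i$ with the $\sigma_i$ sections of $\omega_B$, since by Definition \ref{mtrivial} and Remark \ref{remlif} this is exactly the statement that the $\eta_i$ are Massey trivial. The first step is to exploit local liftability: on a sufficiently small open set $U\subseteq A$ there is a section $\theta\in\Gamma(U,f_*\Omega^{n-1}_X\otimes\omega_B)$ mapping to $\omega$ under the composition of the top row of the diagram with the diagonal inclusion $f_*\Omega^{n-1}_{X/B}\otimes\omega_B\hookrightarrow f_*\omega_X$.

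Next I would push $\theta$ down the left-hand side of the diagram. Applying $\phi_\omega$ and using commutativity of the large square gives $(\nu\otimes\mathrm{id})\big(\phi_\omega(\theta)\big)=\omega\cdot\omega=\omega^2$ in $f_*\omega_X\otimes f_*\omega_X$. Unwinding the explicit formulas (\ref{phi}) for $\phi_\omega$ and (\ref{nu}) for $\nu$, the left-hand side is literally of the shape $\sum_i(\omega_i\wedge\sigma_i)\otimes\rho_i$, with $\sigma_i$ a section of $\omega_B$ coming from $\theta$ and $\rho_i$ of the form $\pm\,s\wedge s_i$. Thus liftability automatically realises $\omega^2$ as the right-hand side of a relative adjoint quadric in the sense of Definition \ref{qa}.

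Then I would invoke the hypothesis that there are no relative adjoint quadrics to force this realisation to be the trivial one. Concretely, the absence of quadrics should mean that the only way $\omega^2$ can be written as $\sum_i(\omega_i\wedge\sigma_i)\otimes\rho_i$ is with the $\rho_i$ reducible, over $\mathcal{O}_B$, to multiples of $\omega$, so that the relation becomes $\omega^2=\big(\sum_i\omega_i\wedge\sigma_i'\big)\otimes\omega$. Since on the smooth curve $B$ the sheaf $f_*\omega_X$ is locally free, multiplication $\cdot\omega\colon f_*\omega_X\to f_*\omega_X\otimes f_*\omega_X$ is injective whenever $\omega\neq0$ (the case $\omega=0$ being trivially Massey trivial); cancelling the common factor $\omega$ then yields the desired linear relation $\omega=\sum_i\omega_i\wedge\sigma_i'$ with $\sigma_i'\in\omega_B$, that is $\omega\in\sW$.

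The step I expect to be the main obstacle is precisely this passage from the quadratic datum to a genuine $\mathcal{O}_B$-linear relation. The difficulty is that the $\rho_i=\pm\,s\wedge s_i$ are, generically, only $\mathcal{O}_X$-multiples of $\omega$: writing $s$ in the pointwise frame $\{\omega_j\}$ where the $s_j$ are independent produces coefficients that are functions on the fibres rather than sections pulled back from $B$, and these are a priori merely meromorphic, with poles concentrated along the degeneracy divisor $\sD$ and along the singular fibres, exactly where $\Omega^1_{X/B}$ and $f_*\omega_X$ cease to be well behaved. It is here that the no-quadrics hypothesis must do its real work: it has to kill the torsion in the relevant cokernel, so that $\overline{\omega}\otimes\omega=0$ forces $\overline{\omega}=0$ in $(f_*\omega_X/\sW)\otimes f_*\omega_X$, and it must ensure that the coefficients descend to $\omega_B$ and extend holomorphically over all of $U$. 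Once the relation is established locally, one finally checks, using left-exactness of the two rows and the factorisation of $\phi_\omega|_{\,f_*\Omega^{n-2}_{X/B}\otimes\omega_B^{\otimes2}}=\psi$ through $K$, that it is independent of the auxiliary choices and glues to the required statement on $A$.
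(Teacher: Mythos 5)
Your diagram chase is exactly the paper's: local liftability produces the liftings $\tilde{\omega}_\alpha$ on a covering $A=\bigcup A_\alpha$; the difference of two liftings lies in $f_*\Omega^{n-2}_{X/B}\otimes\omega_B^{\otimes 2}$, so by the factorisation of $\phi_\omega$ through $\psi$ and $K=\Ker(\nu\otimes\mathrm{id})$ the sections $(\nu\otimes\mathrm{id})(\phi_\omega(\tilde{\omega}_\alpha))$ glue; commutativity of the square gives $\omega^2=(\nu\otimes\mathrm{id})(\phi_\omega(\tilde{\omega}))$; and unwinding (\ref{phi}) and (\ref{nu}) exhibits the right-hand side locally as $\sum_i(\omega_i\wedge\sigma_i)\cdot\rho_i$. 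Up to this point you match the paper step for step.

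The gap is in your final step, and it is genuine in the sense that you never close it: you only describe what the no-quadrics hypothesis ``must'' accomplish. The obstacle you identify there --- forcing the $\rho_i$ to be $\sO_B$-multiples of $\omega$, killing torsion, controlling poles along $\sD$ and descent of fibrewise meromorphic coefficients --- is an artifact of reading the hypothesis as a structural statement constraining all representations of $\omega^2$. The paper uses it in the contrapositive direction, after which the step is immediate: by the convention made explicit in the proof, a relative adjoint quadric is a \emph{true} quadratic relation, i.e.\ one not induced by squaring a linear relation $\omega=\sum_i\omega_i\wedge\sigma_i$. Hence, if $\omega$ were not Massey trivial, the relation $\omega^2=(\nu\otimes\mathrm{id})(\phi_\omega(\tilde{\omega}))$ you have already constructed would itself be a relative adjoint quadric, contradicting the hypothesis outright; therefore $\omega\in\sW$. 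No cancellation of $\omega$, no injectivity of $\cdot\,\omega$, and no analysis of the shape of the $\rho_i$ is required --- your cancellation route (which would in fact work, since $f_*\omega_X$ is locally free on the curve $B$, so $\bigl(\omega-\sum_i\omega_i\wedge\sigma_i'\bigr)\otimes\omega=0$ forces the first factor to vanish) is only invoked after assuming the very reduction you could not prove. Replacing your third paragraph by this one-line contradiction completes the proof; as written, the proposal stops one essential step short.
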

\begin{proof}

Since the Massey product $\omega$ is locally liftable, we call $\tilde{\omega}_\alpha$ the local lifting of $\omega_{|A_\alpha}$ in $f_*\Omega^{n-1}_X\otimes \omega_B$, $A=\bigcup A_\alpha$ an open covering. The difference between two such liftings is in $f_*\Omega^{n-2}_{X/B}\otimes\omega_B^{\otimes 2}$, hence by the commutativity of the first square of Diagram (\ref{diagrammaquadriche}), we have that $(\nu\otimes id)(\phi_\omega(\tilde{\omega}_\alpha))$ glue together to a section of $\Gamma(A, f_*\omega_X\otimes f_*\omega_X)$ which we will denote,  by abuse of notation, $(\nu\otimes id)(\phi_\omega(\tilde{\omega}))$. 

Consider now the commutative square 
\begin{equation}
	\xymatrix{
		f_*\Omega^{n-1}_X\otimes\omega_B\ar[r]\ar^{\phi_\omega}[d] &  f_*\omega_X \ar^{\cdot\omega}[d]\\
		\bigwedge^{n-1} \mD^1\otimes \omega_B\otimes f_*\omega_X\ar^-{\nu\otimes id}[r]& f_*\omega_X\otimes f_*\omega_X\\
	}
\end{equation} coming from Diagram (\ref{diagrammaquadriche}).
We have that $\omega^2=(\nu\otimes id)(\phi_\omega(\tilde{\omega}))$. Now note that by definition $\phi_\omega(\tilde{\omega}_\alpha)$ is a sum containing the wedges $\eta_1\wedge\dots\wedge\widehat{\eta_i}\wedge\dots\wedge\eta_{n}$ as we have seen in (\ref{phi}). Now applying $\nu$  all these wedges $\eta_1\wedge\dots\wedge\widehat{\eta_i}\wedge\dots\wedge\eta_{n}$ produces the sections $\omega_i$ as seen in (\ref{nu}).

We deduce that $\nu\otimes id(\phi_\omega(\tilde{\omega}))$ is locally of the form $\sum \omega_i\wedge\sigma_i\cdot \rho_i
$ where $\sigma_i$ are local sections of $\omega_B$ and $\rho_i$ of  $f_*\omega_{X}$.
Now assume by contradiction that $\omega$ is not Massey trivial, then the relation $\omega^2=\nu\otimes id(\phi_\omega(\tilde{\omega}))$ is a true quadratic relation (and not just the square of a linear relation) and gives a relative adjoint quadric. By our hypothesis these do not exist hence the contradiction and $\omega$ is Massey trivial.
\end{proof}

The first application of Theorem \ref{reladjquadthm} comes from \cite[Theorem B]{RZ4} and gives information on the monodromy associated to local systems generated by Massey trivial vector spaces. Given a vector subspace $L\leq \Gamma(A,\mD^1)$, $L$  naturally generates a local system in $\mD^1$ by taking the closure under the monodromy action.
More precisely, if we denote by $G$ the monodromy group acting non-trivially on $\mD^1$, the local system generated by $L$ is by definition the local system with stalk $\widehat{L}=\sum_{g\in G}g\cdot L$. We will denote it by $\mL$. 
\begin{defn}
	\label{mastrivgen}
	If $L$ is Massey trivial, we will say that $\mL$ is Massey trivial generated.
	
%	If $\widehat{L}$ is Massey trivial, we will say that $\mL$ is Massey trivial. \textcolor{red}{$\leftarrow$ Vedere se questa seconda ci serve}
\end{defn} See \cite[Definition 5.5]{PT}.
Consider the action of the fundamental group $\pi_1(B, b)$ on the stalk of $\mL$ and call  $H_\mL$ the subgroup of $\pi_1(B, b)$ acting trivially on $\mL$ and $G_\mL=\pi_1(B, b)/H_\mL$ the associated monodromy group.
\begin{cor}
	\label{cornag}
Let $L$ be a  strict vector space such that every Massey product of sections of $L$ is locally liftable and there are no relative adjoint quadrics. Then $L$ is Massey trivial and the local system $\mL$ generated under the monodromy action is Massey trivial generated. In particular $\mL$ has finite monodromy.
\end{cor}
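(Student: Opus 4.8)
The plan is to derive Corollary \ref{cornag} from Theorem \ref{reladjquadthm} and Corollary \ref{cornag}'s stated input, namely \cite[Theorem B]{RZ4}, in three linked steps. First I would apply Theorem \ref{reladjquadthm} directly: the hypotheses of the Corollary are exactly that $L$ is strict, that every Massey product of sections of $L$ is locally liftable, and that there are no relative adjoint quadrics. The latter two are precisely the hypotheses of Theorem \ref{reladjquadthm} (applied to each $n$-tuple of sections of $L$), so that theorem yields that every such $n$-tuple is Massey trivial, i.e. that $L$ is Massey trivial in the sense of Definition \ref{mastriv}. This is the immediate conclusion ``$L$ is Massey trivial'' and requires no new argument beyond invoking the theorem tuple-by-tuple.

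Second I would upgrade ``$L$ Massey trivial'' to ``$\mL$ Massey trivial generated.'' By Definition \ref{mastrivgen} this is a matter of verifying that the whole monodromy-generated local system $\mL$, with stalk $\widehat{L}=\sum_{g\in G} g\cdot L$, remains Massey trivial. Here I expect to need that Massey triviality is preserved under the monodromy action, so that each translate $g\cdot L$ is Massey trivial and hence so is the span $\widehat{L}$. The natural way to see this is that the Massey product construction of Definition \ref{mtrivial} is intrinsic to $\mD^1$ as a local system (the condition of being Massey trivial is independent of the splitting, as recalled in Remark \ref{remlif}), and the monodromy acts by automorphisms of $\mD^1$ compatible with the wedge map $\lambda$ of (\ref{agg}); thus it carries Massey-trivial tuples to Massey-trivial tuples. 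One must also check that strictness passes to $\widehat{L}$, or else that the relevant application only needs Massey triviality of the generating $L$; I would rely on the formulation in \cite[Definition 5.5]{PT} and the monodromy-invariance just described to conclude $\mL$ is Massey trivial generated.

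Third, for the finiteness statement I would cite \cite[Theorem B]{RZ4}, which the text flags as the source giving ``information on the monodromy associated to local systems generated by Massey trivial vector spaces.'' The implication to extract is that a Massey trivial generated local system $\mL$ has finite monodromy group $G_\mL=\pi_1(B,b)/H_\mL$. So the final sentence follows by quoting that theorem applied to $\mL$.

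The main obstacle I anticipate is the second step: making precise the claim that Massey triviality is preserved under the monodromy action and hence that the generated local system $\mL$ (not just the originally chosen $L$) is Massey trivial generated. The subtlety is that strictness and the choice of liftings in Proposition \ref{localglobal} are tied to the finite-dimensional space one starts with, whereas $\widehat{L}$ may have larger dimension, so I would need either that the monodromy-invariance of the Massey product is genuinely functorial in $\mD^1$, or that \cite[Definition 5.5]{PT} and \cite[Theorem B]{RZ4} are set up to require only Massey triviality of a generating subspace. Confirming that the hypotheses transfer correctly to $\widehat{L}$ is where the real content lies; the first and third steps are essentially citations.
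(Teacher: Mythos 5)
Your proposal follows the paper's proof exactly: Theorem \ref{reladjquadthm} applied to each $n$-tuple of sections gives that $L$ is Massey trivial, and the finiteness of the monodromy is precisely \cite[Theorem B]{RZ4} applied to the local system generated by a strict and Massey trivial vector space, both hypotheses being available here. The obstacle you single out in your second step is vacuous, and your second hedge is the correct resolution: by Definition \ref{mastrivgen} (following \cite[Definition 5.5]{PT}), ``Massey trivial generated'' is by definition a condition on the generating subspace $L$ alone, not on the full stalk $\widehat{L}=\sum_{g\in G}g\cdot L$, so no monodromy-invariance of Massey triviality and no transfer of strictness to $\widehat{L}$ is needed --- the middle assertion of the corollary holds by definition, which is exactly how the paper dispatches it in one line.
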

\begin{proof}
Take $n$ linearly independent sections of $L$ and consider the associated Massey product. The Massey triviality follows from the previous theorem, hence $L$ is a Massey trivial vector space. The local system $\mL$ generated under the monodromy action is then Massey trivial generated by definition. Local systems generated by a strict and Massey trivial vector space have finite monodromy by \cite[Theorem B]{RZ4}.
\end{proof} For applications of this result see \cite{RZ4}.

\section{Global supported deformations}
\label{sez4}
We recall that originally, see  \cite{RZ2}, \cite{RZ3}, \cite{RZ1},  Massey products have been used as a tool for the study of infinitesimal deformations. Here we generalize this setting  in the case of semistable families $f\colon X\to B$, see also \cite{tesi}, before giving another consequence of Theorem \ref{reladjquadthm}.
\subsection{The Global Kodaira-Spencer map}
Consider again the exact sequence 
\begin{equation}
	\label{relatdiff}
	0\to f^*\omega_B\to \Omega^1_{X}\to\Omega^1_{X/B}\to0.
\end{equation}
The restriction of Sequence (\ref{relatdiff}) on a smooth fiber $X_b$ is the  sequence
\begin{equation}
	\label{seqfibristretta}
	0\to\sO_{X_b}\otimes T_{B,b}^\vee\to \Omega^1_{X|X_b}\to \Omega^1_{X_b}\to0
\end{equation} 
which is associated to an element 
\begin{equation}\label{xib}
	\xi_b\in H^1(X_b,T_{X_b})\otimes T_{B,b}^\vee=\text{Ext}^1(\Omega^1_{X_b},\sO_{X_b})\otimes T_{B,b}^\vee.
\end{equation} Since $ H^1(X_b,T_{X_b})$ is the space of first order deformations of $X_b$, the class $\xi_b$ naturally corresponds to the deformation of the fiber  $X_b$ induced by the family $f\colon X\to B$. The key to encode all the extensions $\xi_b$ in a unique object is the notion of relative extension sheaf. We have learned this tool from \cite{S}.
\begin{defn}
	Given a morphism of schemes $f\colon X\to Y$, the relative extension sheaf $\ext^p_f$ is  the $p$-th derived functor of $f_*\hhom$.
\end{defn} 
For all the properties of the relative extension sheaves we refer to \cite[Chapter 1]{B}. Here we only recall the following:
\begin{thm}
	\label{proprieta}
	The sheaves $\ext^p_f$ satisfy
	\begin{enumerate}
		\item If $f$ is projective and $\sF,\sG$ are coherent $\sO_X$-modules, then $\ext^p_f(\sF,\sG)$ is a coherent $\sO_X$-module
		\item $\ext^p_f(\sF,\sG)$ is the sheaf associated to the presheaf $U\mapsto \textnormal{Ext}^p(\sF|_{f^{-1}(U)},\sG|_{f^{-1}(U)})$. In particular it holds that 
		\begin{equation*}
			\ext^p_f(\sF,\sG)_{|U}\cong\ext^p_f(\sF_{|f^{-1}(U)},\sG_{|f^{-1}(U)})
		\end{equation*} 
		\item $\ext^p_f(\sO_X,\sG)=R^pf_*\sG$
		\item If $\sL$ and $\sN$ are locally free sheaves of finite rank on $X$ and $Y$, respectively, then
		\begin{equation*}
			\begin{split}
				\ext^p_f(\sF\otimes\sL,-\otimes f^*\sN)\cong\ext^p_f(\sF,-\otimes\sL^\vee\otimes f^*\sN)\cong\\\cong\ext^p_f(\sF,-\otimes\sL^\vee)\otimes\sN
			\end{split}
		\end{equation*}
		\item For any $\sO_X$-modules $\sF,\sG$ there is a spectral sequence, called  \emph{local to global spectral sequence},
		\begin{equation*}
			E_2^{p,q}=R^pf_*\ext^q(\sF,\sG)\Longrightarrow\ext^{p+q}_f(\sF,\sG)
		\end{equation*}where $\ext^q$ is the usual extension sheaf on $X$, that is the derived functor of $\hhom$.
	\item Under the same hypotheses of (5), we also have the spectral sequence 
	\begin{equation*}
		E_2^{p,q}=H^p(B,\ext^{q}_f(\sF,\sG))\Longrightarrow\textnormal{Ext}^{p+q}(\sF,\sG).
	\end{equation*}
	\end{enumerate}
\end{thm}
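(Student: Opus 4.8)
The plan is to derive all six statements from one structural observation: for fixed $\sF$, the relative extension sheaf $\ext^p_f(\sF,-)$ is the $p$-th right derived functor of the left exact composite functor $f_*\hhom(\sF,-)$ from $\sO_X$-modules to $\sO_B$-modules. Fixing an injective resolution $\sG\to\sI^\bullet$, I would compute $\ext^p_f(\sF,\sG)$ as the $p$-th cohomology of the complex $f_*\hhom(\sF,\sI^\bullet)$. The single technical input that makes everything run is the standard fact that, for an injective $\sO_X$-module $\sI$, the sheaf $\hhom(\sF,\sI)$ is flasque; consequently it is $f_*$-acyclic, and its pushforward $f_*\hhom(\sF,\sI)$ is again flasque and hence $\Gamma(B,-)$-acyclic. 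This acyclicity is the engine behind both spectral sequences below.

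Items (3), (2) and (4) are then essentially formal. For (3) one notes that $\hhom(\sO_X,-)$ is the identity, so the composite functor is just $f_*$ and its derived functors are $R^pf_*$. For (2), restriction to $f^{-1}(U)$ is exact and preserves injectives (its left adjoint, extension by zero, is exact), so the restricted resolution computes $\ext^p_f(\sF,\sG)_{|U}$ and $\ext^p_f(\sF_{|f^{-1}(U)},\sG_{|f^{-1}(U)})$ simultaneously, giving the localization isomorphism; the sheafification description is exactly the general mechanism by which a derived functor built from $f_*$ arises as the sheafification of the naive presheaf of $\textnormal{Ext}$ groups. For (4) I would first record the underived natural isomorphisms $\hhom(\sF\otimes\sL,-)\cong\hhom(\sF,-\otimes\sL^\vee)$ for $\sL$ locally free of finite rank, together with the projection formula $f_*(-\otimes f^*\sN)\cong f_*(-)\otimes\sN$ for $\sN$ locally free; since tensoring with a locally free sheaf of finite rank is exact and preserves the relevant acyclicity, these isomorphisms propagate through the injective resolution to the derived level and compose to give the stated chain.

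The two spectral sequences (5) and (6) are instances of the Grothendieck spectral sequence of a composite of functors. For (5) I would compose $\hhom(\sF,-)$, whose derived functors are the absolute extension sheaves $\ext^q(\sF,-)$, with $f_*$, whose derived functors are $R^pf_*$; the hypothesis that the first functor sends injectives to $f_*$-acyclics is precisely the flasqueness fact recorded above, and the abutment $R^{p+q}\bigl(f_*\hhom(\sF,-)\bigr)(\sG)$ is $\ext^{p+q}_f(\sF,\sG)$ by definition. For (6) I would instead compose $f_*\hhom(\sF,-)$, with derived functors $\ext^q_f(\sF,-)$, with the global sections functor $\Gamma(B,-)$, with derived functors $H^p(B,-)$; the composite is $\Gamma(X,\hhom(\sF,-))$, whose derived functors are the absolute $\textnormal{Ext}^{p+q}(\sF,\sG)$, and the required acyclicity holds because pushforwards of flasque sheaves are flasque, hence $\Gamma(B,-)$-acyclic.

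Finally, item (1) is where I expect the real obstacle to lie, since this is the only statement using projectivity of $f$ in an essential way rather than as formal bookkeeping. The plan is to combine the spectral sequence (5) with Grothendieck's coherence theorem. On the Noetherian scheme $X$ the sheaves $\ext^q(\sF,\sG)$ are coherent for $\sF,\sG$ coherent, and for $f$ projective the higher direct images $R^pf_*$ of a coherent sheaf are coherent $\sO_B$-modules. Since the spectral sequence is concentrated in finitely many bidegrees with coherent $E_2^{p,q}$-terms, every differential is a morphism of coherent sheaves, so each $E_\infty^{p,q}$ is coherent; as coherence is stable under extensions and the abutment carries a finite filtration with these graded pieces, $\ext^{p+q}_f(\sF,\sG)$ is coherent. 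The delicate point I would flag is exactly the invocation of the coherence theorem for projective morphisms: this is the single genuinely non-formal ingredient, everything else being derived functor formalism.
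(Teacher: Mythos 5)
Your proposal is correct, and it follows essentially the same route the paper indicates: the paper does not prove the theorem itself but cites \cite[Chapter 1]{B} for the general properties and explicitly derives (5) and (6) from Grothendieck's composite-functor spectral sequence with exactly your factorizations ($F=f_*$, $G=\hhom(\sF,-)$ for (5); $F=\Gamma$, $G=f_*\hhom(\sF,-)$ for (6)), with the flasqueness of $\hhom(\sF,\sI)$ for $\sI$ injective supplying the acyclicity hypothesis just as in your argument. Your treatment of (1) via the local-to-global spectral sequence together with Grothendieck's coherence theorem for projective morphisms is the standard proof and fills in the details the paper leaves to the references.
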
 
The spectral sequences in (5) and (6) can both be seen as a consequence of a result of  Grothendieck that computes the derived functor of the composition of two functors $F$ and $G$ knowing the derived functors of $F$ and $G$ separately, cf. \cite[Theorem 12.10]{Mc}. In (5) we take $F=f_*$ and $G=\hhom$ and in (6) $F=\Gamma$ and $G=f_*\hhom$.

Now if we apply the functor $f_*\hhom(-,f^*\omega_B)$ to the exact Sequence (\ref{relatdiff}) we obtain, from the resulting long exact sequence, the morphism
$$
f_*\hhom(f^*\omega_B,f^*\omega_B)\to \ext^1_f(\Omega^1_{X/B},f^*\omega_B)
$$ which translates, by the properties mentioned in Theorem \ref{proprieta}, into
$$
\sO_B\to \ext^1_f(\Omega^1_{X/B},\sO_X)\otimes\omega_B.
$$ 
\begin{defn}
	\label{kodspen}
	The image of $1\in H^0(B,\sO_B)$ is a morphism
	\begin{equation}
		\label{ksf}
		T_B\to  \ext^1_f(\Omega^1_{X/B},\sO_X)
	\end{equation} which is called the Global Kodaira-Spencer map.
\end{defn}

In this paper, we will mainly consider the extension sheaf
\begin{equation*}
	\ext^1_f(\Omega^1_{X/B},f^*\omega_B)=\ext^1_f(\Omega^1_{X/B},\sO_X)\otimes\omega_B.
\end{equation*}
The following Lemma shows how this sheaf behaves on a suitable Zariski open set $B'\subset B$ and justifies the name Kodaira-Spencer for the morphism in Definition (\ref{kodspen}).

\begin{lem}
	\label{generalg}
	There is an injection
	\begin{equation*}
		R^1f_*\hhom(\Omega^1_{X/B},f^*\omega_B)\hookrightarrow \ext^1_f(\Omega^1_{X/B},f^*\omega_B)
	\end{equation*}
	which is an isomorphism over an open dense subset of $B$. In particular, for general $b\in B$ we have the isomorphism
	\begin{equation*}
		\ext^1_f(\Omega^1_{X/B},f^*\omega_B)\otimes \mC(b)\cong H^1(X_b,T_{X_b})\otimes T_{B,b}^\vee\cong \textnormal{Ext}^1(\Omega^1_{X_b},\sO_{X_b})\otimes T_{B,b}^\vee.
	\end{equation*}
\end{lem}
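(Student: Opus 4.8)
The plan is to deduce everything from the local-to-global spectral sequence for relative Ext recorded in Theorem \ref{proprieta}(5), applied to $\sF=\Omega^1_{X/B}$ and $\sG=f^*\omega_B$. Writing
\[
E_2^{p,q}=R^pf_*\ext^q(\Omega^1_{X/B},f^*\omega_B)\Longrightarrow \ext^{p+q}_f(\Omega^1_{X/B},f^*\omega_B)
\]
and recalling that $\ext^0=\hhom$, the associated five-term exact sequence of low degree reads
\[
0\to R^1f_*\hhom(\Omega^1_{X/B},f^*\omega_B)\to \ext^1_f(\Omega^1_{X/B},f^*\omega_B)\to f_*\ext^1(\Omega^1_{X/B},f^*\omega_B)\to\cdots
\]
The leftmost map is exactly the claimed injection, so the first assertion is immediate; it remains to control the cokernel, which by exactness embeds into $f_*\ext^1(\Omega^1_{X/B},f^*\omega_B)$.

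First I would show that this obstruction sheaf is supported on the finite set $B_0$ of singular values. Over $f^{-1}(B^0)$ the fibration $f$ is smooth, hence $\Omega^1_{X/B}$ is locally free there; for a locally free sheaf $\sF$ one has $\ext^q(\sF,-)=0$ for $q>0$, so $\ext^1(\Omega^1_{X/B},f^*\omega_B)$ vanishes on $f^{-1}(B^0)$ and therefore $f_*\ext^1(\Omega^1_{X/B},f^*\omega_B)$ is supported on $B_0$. Since $B$ is a curve, $B^0=B\setminus B_0$ is open and dense, and over $B^0$ the five-term sequence forces the injection to be an isomorphism.

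For the fiberwise statement I would work over $B^0$, where $\hhom(\Omega^1_{X/B},f^*\omega_B)\cong T_{X/B}\otimes f^*\omega_B$ because $\Omega^1_{X/B}$ is locally free. Applying the projection formula (with $\omega_B$ a line bundle on the base) gives $R^1f_*\hhom(\Omega^1_{X/B},f^*\omega_B)|_{B^0}\cong (R^1f_*T_{X/B})\otimes\omega_B$. By cohomology and base change on the smooth locus there is a dense open subset over which $R^1f_*T_{X/B}$ is locally free and $(R^1f_*T_{X/B})\otimes\mC(b)\cong H^1(X_b,T_{X_b})$, while $\omega_B\otimes\mC(b)\cong T_{B,b}^\vee$. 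Combining with the isomorphism of the previous paragraph yields
\[
\ext^1_f(\Omega^1_{X/B},f^*\omega_B)\otimes\mC(b)\cong H^1(X_b,T_{X_b})\otimes T_{B,b}^\vee,
\]
and the final identification with $\textnormal{Ext}^1(\Omega^1_{X_b},\sO_{X_b})\otimes T_{B,b}^\vee$ is the standard degeneration of the local-to-global Ext spectral sequence on the smooth fiber $X_b$, where $\Omega^1_{X_b}$ is locally free so that $\textnormal{Ext}^1(\Omega^1_{X_b},\sO_{X_b})=H^1(X_b,\hhom(\Omega^1_{X_b},\sO_{X_b}))=H^1(X_b,T_{X_b})$.

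The main obstacle is the base-change step: cohomology and base change is only clean where the relevant higher direct image is locally free, i.e. where the dimensions $h^1(X_b,T_{X_b})$ are locally constant. This holds on a dense open subset of $B^0$ by semicontinuity, which is precisely why the fiber identification is asserted only for general $b$; the remaining steps are formal consequences of the local freeness of $\Omega^1_{X/B}$ on $f^{-1}(B^0)$ and of $\Omega^1_{X_b}$ on the smooth fiber.
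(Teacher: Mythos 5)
Your proposal is correct and follows essentially the same route as the paper: the five-term exact sequence of the local-to-global spectral sequence from Theorem \ref{proprieta}(5) gives the injection, local freeness of $\Omega^1_{X/B}$ over $f^{-1}(B^0)$ kills the obstruction term $f_*\ext^1(\Omega^1_{X/B},f^*\omega_B)$, and the fiberwise identification is the paper's appeal to cohomology and base change (\cite[Theorem 12.11]{H1}), which you merely spell out in more detail via semicontinuity. No gaps to report.
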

\begin{proof}
	The five term exact sequence associated to the local to global spectral sequence recalled in Theorem \ref{proprieta} Point (5)
	\begin{equation*}
		\begin{split}
			0\to R^1f_*\hhom(\Omega^1_{X/B},f^*\omega_B)\to \ext^1_f(\Omega^1_{X/B},f^*\omega_B)\to f_*\ext^1(\Omega^1_{X/B},f^*\omega_B)\to\\\to R^2f_*\hhom(\Omega^1_{X/B},f^*\omega_B)\to \ext^2_f(\Omega^1_{X/B},f^*\omega_B)
		\end{split}		
	\end{equation*} gives the desired injection. Note that on $X^0=f^{-1}(B^0)$, $\Omega^1_{X/B}$ is locally free, hence $\ext^1(\Omega^1_{X/B},f^*\omega_B)$ is zero and this injection is an isomorphism on $B^0$:
	\begin{equation}
		\begin{split}
		 \ext^1_f(\Omega^1_{X/B},f^*\omega_B)_{|B^0}\cong R^1f_*\hhom(\Omega^1_{X/B},f^*\omega_B)\cong R^1f_*(T_{X/B})\otimes \omega_B.
		\end{split}
	\end{equation} The last statement is the Proper base change theorem \cite[Theorem 12.11]{H1}.
\end{proof}

We note that specializing the Global Kodaira-Spencer
\begin{equation}
	T_B\to\ext^1_f(\Omega^1_{X/B},\sO_X)
\end{equation}  in $b\in B'$ we  get the well known Kodaira-Spencer map at the point $b$
\begin{equation}
	\label{ks}
	T_{B,b}\to H^1(X_b,T_{X_b})\cong \textnormal{Ext}^1(\Omega^1_{X_b},\sO_{X_b}).
\end{equation}

By a famous general result, the Global Kodaira-Spencer morphism is zero on an open subset of $B$ if and only if the family is locally trivial on this set. In particular all the fibers are isomorphic and  the map (\ref{ks}) is zero in every point. Conversely it is not true that if (\ref{ks}) is zero in every point, then the Global Kodaira-Spencer is also zero. This holds however when the family is regular, i.e. the dimension of the complex vector space $H^1(X_b, T_{X_b})$ is the same for all points in the set. See for example \cite[Section 4]{K}.

\begin{rmk}
	To our knowledge $B^0= B'$ if the fibration is regular. In general the relation between  $B'$ and $B^0$ seems to be not fully clarified,
\end{rmk}

%The local to global spectral sequence of Theorem  is a consequence of a result of  Grothendieck, see \cite[Theorem 12.10]{Mc} and there is actually another important spectral sequence involving the sheaves of relative extensions:

\begin{lem}
	\label{ss2}
	We have a surjective morphism 
	\begin{equation}
		\rho\colon \textnormal{Ext}^{1}(\Omega^1_{X/B},f^*\omega_B)\to H^0(B,\ext^1_f(\Omega^1_{X/B},f^*\omega_B))
	\end{equation} which is also an isomorphism if the general fiber of $f\colon X\to B$ is of general type.
Calling $\xi\in \textnormal{Ext}^{1}(\Omega^1_{X/B},f^*\omega_B)$ the element corresponding to Sequence (\ref{relatdiff}), $\rho$ maps $\xi$ to the Global Kodaira-Spencer map $\rho(\xi)$ which associates to  $b\in B'$ the element $\xi_b\in H^1(X_b,T_{X_b})\otimes T_{B,b}^\vee$ as defined in (\ref{xib}).
\end{lem}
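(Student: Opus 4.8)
The plan is to read off both the existence of $\rho$ and the description of its kernel from the low-degree exact sequence of the Grothendieck spectral sequence in Theorem \ref{proprieta}(6), and then to identify the image of $\xi$ by a naturality argument comparing the absolute and relative connecting homomorphisms.

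First I would apply the spectral sequence $E_2^{p,q}=H^p(B,\ext^q_f(\Omega^1_{X/B},f^*\omega_B))\Rightarrow\textnormal{Ext}^{p+q}(\Omega^1_{X/B},f^*\omega_B)$ of Theorem \ref{proprieta}(6) and write out its five-term exact sequence
\begin{multline*}
0\to H^1(B,f_*\hhom(\Omega^1_{X/B},f^*\omega_B))\to \textnormal{Ext}^1(\Omega^1_{X/B},f^*\omega_B)\xrightarrow{\ \rho\ }\\
H^0(B,\ext^1_f(\Omega^1_{X/B},f^*\omega_B))\to H^2(B,f_*\hhom(\Omega^1_{X/B},f^*\omega_B)),
\end{multline*}
in which $\rho$ is precisely the edge morphism. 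Since $B$ is a curve, $H^2(B,\mathcal{F})=0$ for every coherent sheaf $\mathcal{F}$, so the rightmost term vanishes and $\rho$ is surjective; this gives the first assertion. Exactness at the $\textnormal{Ext}^1$-term moreover identifies $\ker\rho$ with $H^1(B,f_*\hhom(\Omega^1_{X/B},f^*\omega_B))$.

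For the isomorphism statement it therefore suffices to prove that $\mathcal{G}:=f_*\hhom(\Omega^1_{X/B},f^*\omega_B)$ has vanishing $H^1$ when the general fiber is of general type. I would argue that $\mathcal{G}$ is a torsion sheaf on $B$. On $B^0$ the relative cotangent sheaf $\Omega^1_{X/B}$ is locally free, so $\mathcal{G}|_{B^0}\cong f_*T_{X/B}\otimes\omega_B|_{B^0}$ by the projection formula; by generic base change its rank at the generic point equals $h^0(X_b,T_{X_b})$ for general $b$, which is $0$ because a variety of general type carries no nonzero global vector field. Hence $\mathcal{G}$ is zero at the generic point of $B$, so it is torsion, supported on finitely many closed points, and a coherent sheaf with zero-dimensional support on a curve is acyclic. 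Thus $\ker\rho=H^1(B,\mathcal{G})=0$ and $\rho$ is an isomorphism. Note that the identification with $f_*T_{X/B}\otimes\omega_B$ is only valid over $B^0$, which is why I pass to the generic point rather than work globally.

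Finally, for the identification of $\rho(\xi)$ I would invoke the naturality of the edge morphism as a natural transformation of $\delta$-functors. Applying the relative functor $f_*\hhom(-,f^*\omega_B)$ to Sequence (\ref{relatdiff}) produces a long exact sequence whose connecting map sends $1\in H^0(B,\sO_B)=H^0(B,f_*\hhom(f^*\omega_B,f^*\omega_B))$ to the Global Kodaira--Spencer section of Definition \ref{kodspen}; applying instead the absolute functor $\textnormal{Hom}(-,f^*\omega_B)=\Gamma\circ f_*\hhom(-,f^*\omega_B)$ sends $1=\textnormal{id}_{f^*\omega_B}$ to $\xi$. These two connecting homomorphisms fit into a commutative square with the degree-zero edge isomorphism (the identity on $\textnormal{Hom}(f^*\omega_B,f^*\omega_B)$) and the edge map $\rho$, so $\rho(\xi)$ coincides with the Global Kodaira--Spencer section; restricting to a general $b\in B'$ then recovers $\xi_b\in H^1(X_b,T_{X_b})\otimes T_{B,b}^\vee$ by Lemma \ref{generalg} and proper base change. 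I expect the main obstacle to be the vanishing $H^1(B,\mathcal{G})=0$: one must argue cleanly that $\mathcal{G}$ has generic rank zero despite the failure of local freeness of $\Omega^1_{X/B}$ along the singular fibers, while the compatibility of the two connecting maps with $\rho$ is conceptually routine but needs to be spelled out to make the last claim rigorous.
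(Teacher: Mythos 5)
Your proposal is correct and matches the paper's proof essentially step for step: the same five-term exact sequence of the Grothendieck spectral sequence of Theorem \ref{proprieta}(6) gives surjectivity of the edge map $\rho$ (the obstruction term $H^2(B,f_*\hhom(\Omega^1_{X/B},f^*\omega_B))$ vanishing because $B$ is a curve), and the isomorphism statement reduces in both arguments to the generic vanishing of $f_*T_{X/B}$ coming from $h^0(X_b,T_{X_b})=0$ for fibers of general type. The only cosmetic differences are that the paper identifies $f_*\hhom(\Omega^1_{X/B},f^*\omega_B)\cong f_*T_{X/B}\otimes\omega_B$ globally and uses torsion-freeness of $f_*T_{X/B}$ on the curve to conclude the sheaf is zero outright, whereas you identify it only over $B^0$ and conclude it is torsion, hence acyclic, which suffices equally well; and for the fiberwise identification $\rho(\xi)\mapsto\xi_b$ the paper simply cites Lange's lemma on universal families of extensions, which is precisely the base-change compatibility of extension classes that your naturality-of-edge-maps argument sketches.
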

\begin{proof}
	From the spectral sequence in Theorem \ref{proprieta} Point (6), we get the associated five terms exact sequence:
	\begin{equation*}
		\begin{split}
			0\to H^1(B,f_*\hhom(\Omega^1_{X/B},f^*\omega_B))\to\text{Ext}^{1}(\Omega^1_{X/B},f^*\omega_B)\stackrel{\rho}{\rightarrow} H^0(B,\ext^1_f(\Omega^1_{X/B},f^*\omega_B))\to\\\to H^2(B,f_*\hhom(\Omega^1_{X/B},f^*\omega_B))\to \text{Ext}^{2}(\Omega^1_{X/B},f^*\omega_B).
		\end{split}
	\end{equation*} The fourth term is zero because $B$ is a curve, hence $\rho$ is surjective.

Now note in the first term of this sequence that $$f_*\hhom(\Omega^1_{X/B},f^*\omega_B)=f_*\hhom(\Omega^1_{X/B},\sO_X)\otimes \omega_B=f_*T_{X/B}\otimes \omega_B.$$
Since $f_*T_{X/B}$ is torsion free, it is a line bundle on $B$ and if the general fiber of $f$ is of general type then $f_*T_{X/B}=0$ and we get the desired isomorphism.

 For the last statement, $\rho$ maps $\xi$ to a global section of $\ext^1_f(\Omega^1_{X/B},f^*\omega_B)$ which
  associates to the general $b\in B'$ the element $\xi_b\in H^1(X_b,T_{X_b})\otimes T_{B,b}^\vee$ as defined in (\ref{xib}); see \cite[Lemma 2.1]{lange}.
\end{proof} 
\begin{rmk}
	If the general fiber is of general type, the map $\rho$ is actually surjective even if $\dim B>1$.
\end{rmk}

\subsection{Global Kodaira-Spencer supported on a horizontal divisor}
Let $L\leq\Gamma(A,\mD^1)$ be a vector space of sections of the local system $\mD^1$ and choose $\eta_i$, $i=1,\dots, l$, forming a basis for $L$. Denote by $s_i$ the liftings of these sections via the splitting of (\ref{split}) fixed above.
We define the following divisors in $f^{-1}(A)$.

\begin{defn} 
Let $\sD^A$ be the divisor in $f^{-1}(A)$ given by the common zeroes of the sections $s_{i_1}\wedge\dots\wedge s_{i_{n-1}}\wedge \sigma$ where the $s_i$ run among the liftings above and $\sigma$ over the local sections of $\omega_B$ on $A$.

Denote by $\sD_{Hor}^A$ the divisor obtained by the horizontal components of $\sD^A$ and with  $D_b$ the restriction $\sD_{Hor}^A$ to the general fiber $X_b$. 

\end{defn}Note that $D_b$ is the fixed part of the sections $\eta_{i_1}\wedge\dots\wedge \eta_{i_{n-1}}$ where the $\eta_i$ run among the elements of the basis of $L$.
\begin{rmk}
\label{remark}
First note that $\sD^A$ and $\sD^A_{Hor}$ do not depend on the choice of the splitting of (\ref{split}) fixed above. In fact a different choice gives new liftings $\tilde{s_i}$, with $s_i-\tilde{s_i}\in \Gamma(A,\omega_B)$.

Furthermore consider  a Massey product $\omega=s_{i_1}\wedge\dots\wedge s_{i_{n}}$ of sections of $L$. By local computation it is clear that $\omega$ vanishes on $\sD^A_{Hor}$.
\end{rmk}

 We  can define the following sheaf on $A$: $$\ext^{1}_f(\Omega^1_{X/B}(-\sD^A_{Hor}),f^*\omega_B):=\ext^{1}_f({\Omega^1_{X/B}}_{|f^{-1}(A)}(-\sD^A_{Hor}),{f^*\omega_{B}}_{|A}).$$

%The subspace $L< \Gamma(A,\mD^1)$  naturally generates a local system in $\mD^1$ by taking the closure under the monodromy action.
%More precisely, if we denote by $G$ the monodromy group acting non-trivially on $\mD^1$, the local system generated by $L$ is by definition the local system with stalk $\widehat{L}=\sum_{g\in G}g\cdot L$. We will denote it by $\mL$.
%\begin{defn}
%	\label{mastrivgen}
%	If $L$ is Massey trivial, we will say that $\mL$ is Massey trivial generated.
%\end{defn} See \cite[Definition 5.5]{PT}.
Alternatively recall that by $\mL$ we denote the local system generated by $L$, $H_\mL$ the subgroup of $\pi_1(B, b)$ acting trivially on $\mL$ and $G_\mL=\pi_1(B, b)/H_\mL$ the monodromy group.

Let $\widetilde{B}\to B$ the covering classified by the subgroup $H_\mL$ and $\tilde{f}\colon\widetilde{X}\to \widetilde{B}$ the associated pullback fibration. The inverse image of the local system $\mL$ on $\widetilde{B}$ is trivial, in particular the sections $\eta_i$ are global and their liftings $s_i$ are global closed 1-forms on $\widetilde{X}$. This means that $\sD^A$ and $\sD^A_{Hor}$ define global divisors $\widetilde{\sD}$ and $\widetilde{\sD}_{Hor}$ on $\widetilde{X}$.
 Hence  $\ext^{1}_f(\Omega^1_{\widetilde{X}/\widetilde{B}}(-\widetilde{\sD}_{Hor}),f^*\omega_{\widetilde{B}})$ is defined on the whole base $\widetilde{B}$. 
 
\begin{rmk}\label{mono} When $\mL$ is Massey trivial generated and strict, by \cite[Theorem B]{RZ4} the monodromy of $\mL$ is finite hence the covering $\widetilde{B}\to B$ is also finite and  $\tilde{f}\colon\widetilde{X}\to \widetilde{B}$ is a fibration of projective varieties. So, under these hypotheses of Massey triviality, it is not restrictive to assume that everything is globally defined, since this is true up to a finite covering which does not impact the local deformation data of the fibers. Note that $\rho(\xi)$ is a global section of $\ext^1_f(\Omega^1_{X/B},f^*\omega_B)$ which defines a global section $\widetilde{\rho(\xi)}$ of  $\ext^1_{\tilde{f}}(\Omega^1_{{\widetilde{X}}/{\widetilde{B}}},\tilde{f}^*\omega_{\widetilde{B}})$; for example by Theorem \ref{proprieta} Point (2).
\end{rmk}
  
Finally we note that the relative $\ext$ functors are contravariant in the first component and we obtain a morphism (on $A$)
$$
\ext^1_f(\Omega^1_{X/B},f^*\omega_B)\to\ext^{1}_f(\Omega^1_{X/B}(-\sD^A_{Hor}),f^*\omega_B).
$$
\begin{rmk}
	By the same arguments seen in Lemma \ref{generalg}, we have that $$\ext^{1}_f(\Omega^1_{X/B}(-\sD^A_{Hor}),f^*\omega_B)\otimes \mC(b)\cong\textnormal{Ext}^1(\Omega^1_{X_b}(-D_b), \sO_{X_b})\otimes T_{B,b}^\vee$$ for general $b\in A$.
\end{rmk}

	We recall that $\xi_b\in H^1(X_b,T_{X_b})$ is \textit{supported} on a divisor $E_b$ in $X_b$ if 
		\begin{equation}
		\xi_b\in \Ker H^1(X_b,T_{X_b})\to H^1(X_b,T_{X_b}(E_b)).
	\end{equation}  See \cite{RZ1}.
The new concept of global supported deformation is Definition \ref{classe}, that we recall:
\begin{defn}	\label{supportato3}
	We say that $\rho(\xi)$ is supported on a horizontal divisor  $\sE$ in $f^{-1}(A)$ if 
%		\begin{equation}
%		\xi\in \Ker  \textnormal{Ext}^{1}(\Omega^1_{X/B},f^*\omega_B)\to  \textnormal{Ext}^{1}(\Omega^1_{X/B}(-\sD_{Hor}),f^*\omega_B).
%	\end{equation} ATTENZIONE ALLE POSSIBILI DEFINIZIONI, SONO EQUIVALENTI IN QUALCHE CASO?
	\begin{equation}
			\rho(\xi)_{|A}\in \Ker H^0(A,\ext^1_f(\Omega^1_{X/B},f^*\omega_B))\to H^0(A,\ext^{1}_f(\Omega^1_{X/B}(-\sE),f^*\omega_B)).	
\end{equation}
\end{defn}

By what we have seen so far, if $\rho(\xi)$ is supported on $\sD^A_{Hor}$ then  $\xi_b$ is supported on $D_b$ for the general $b\in B$. The viceversa is not true, since $\ext^{1}_f(\Omega^1_{X/B}(-\sD^A_{Hor}),f^*\omega_B)$ in general has a torsion part. 

Note also that if $\rho(\xi)$ is supported on $\sD^A_{Hor}$, we have that in the following diagram of torsion free sheaves on $f^{-1}(A)$
\begin{equation}
	\label{qui}
	\xymatrix{0\ar[r]& f^*\omega_B\ar[r]\ar@{=}[d]&{\sE}\ar[r]\ar[d]&\Omega^1_{X/B}(-\sD^A_{Hor})\ar[r]\ar[d]&0\\
		0\ar[r]& \ar[r]f^*\omega_B&\Omega^1_X \ar[r]&\Omega^1_{X/B}\ar[r]&0
	}
\end{equation} the top row splits when restricted to the general fiber. Of course this does not mean that the top row itself splits.

\subsection{Global supported deformations and Massey triviality}
In light of the Adjoint theorem \cite[Theorem A]{RZ1} we have the following result

\begin{thm}
	\label{aggiuntarel}
%Let $L\leq\Gamma(A,\mD^1)$ be a Massey trivial strict vector space, so that  $\widetilde{\sD}_{Hor}$ is globally defined on $\widetilde{X}$. Assume that $L$  generically generates  $\Omega^1_{X_b}$ on the general fiber. Then we have that $\widetilde{\rho(\xi)}$ is supported on ${\widetilde{\sD}{_{Hor}}}_{|\widetilde{B}'}$. Furthermore if  $\ext^{1}_{\tilde{f}}(\Omega^1_{{\widetilde{X}}/{\widetilde{B}}}(-\widetilde{\sD}_{Hor}),\tilde{f}^*\omega_{\widetilde{B}})$ is torsion free, then $\rho(\xi)$ is supported on ${\widetilde{\sD}{_{Hor}}}$. 
Let $L\leq\Gamma(A,\mD^1)$ be a Massey trivial vector space. Assume that $L$  generically generates $\Omega^1_{X_b}$ on the general fiber. Then $\rho(\xi)$ is supported on $\sD^{A'}_{Hor}$, where $A'\subset A$ is a suitable open dense subset. Furthermore if  $\ext^{1}_f(\Omega^1_{{{X}}/{{B}}}(-\sD^{A}_{Hor}),{f}^*\omega_{{B}})$ is torsion free, then $\rho(\xi)$ is supported on $\sD^{A}_{Hor}$. 
\end{thm}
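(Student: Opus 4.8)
The plan is to deduce this sheaf-level statement from the pointwise Adjoint Theorem \cite[Theorem A]{RZ1} applied on the general fibre, and then to promote the resulting fibrewise vanishing to the vanishing of a global section by a torsion argument on the curve. Throughout I identify $\rho(\xi)$ with the global section of $\ext^1_f(\Omega^1_{X/B},f^*\omega_B)$ produced by Lemma \ref{ss2}, whose value at a general $b\in B'$ is $\xi_b\in H^1(X_b,T_{X_b})\otimes T^\vee_{B,b}$ (Lemma \ref{generalg}).

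First I would record the compatibility between the relative and the pointwise Massey products. By construction the fixed splitting of Sequence (\ref{split}) restricts, on a general fibre $X_b$, to the splitting used to define the adjoint image of $\eta_1|_{X_b},\dots,\eta_n|_{X_b}$ against the extension class $\xi_b$ of Sequence (\ref{seqfibristretta}); thus the hypothesis that $L$ is Massey trivial (Definition \ref{mastriv}) says precisely that, for general $b$, every $n$-tuple of sections of $L|_{X_b}$ has Massey-trivial adjoint image in the sense of \cite{RZ1}. The assumption that $L$ generically generates $\Omega^1_{X_b}$ guarantees that $D_b$ is a genuine (proper) base divisor of the top wedges $\omega_i|_{X_b}$ and that the hypotheses of the Adjoint Theorem are met. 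Hence \cite[Theorem A]{RZ1} yields that $\xi_b$ is supported on $D_b$, i.e.
\begin{equation*}
	\xi_b\in\Ker\bigl(H^1(X_b,T_{X_b})\to H^1(X_b,T_{X_b}(D_b))\bigr)
\end{equation*}
for every $b$ in a dense subset of $A$.

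Next I would translate this into the vanishing of a section. Write $\sF:=\ext^1_f(\Omega^1_{X/B}(-\sD^A_{Hor}),f^*\omega_B)$ and let $t$ be the image of $\rho(\xi)_{|A}$ under the natural contravariant map $\ext^1_f(\Omega^1_{X/B},f^*\omega_B)\to\sF$; proving support on $\sD^{A'}_{Hor}$ amounts, by Definition \ref{supportato3}, to showing $t_{|A'}=0$. Using the base-change identification $\sF\otimes\mC(b)\cong\text{Ext}^1(\Omega^1_{X_b}(-D_b),\sO_{X_b})\otimes T^\vee_{B,b}$ valid for general $b$ (the Remark after Definition \ref{supportato3}) together with naturality of the restriction maps, the value $t\otimes\mC(b)$ is exactly the image of $\xi_b$ under the restriction map above, hence $t\otimes\mC(b)=0$ for general $b$. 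Now, since $A$ is a smooth curve, the torsion subsheaf of $\sF$ is supported on a finite set; taking $A'\subseteq A$ to be its dense open complement makes $\sF_{|A'}$ torsion free, hence locally free (torsion free equals locally free on a smooth curve). A nonzero section of a locally free sheaf on a smooth curve vanishes at only finitely many points, while $t_{|A'}$ has $t\otimes\mC(b)=0$ on the dense general set; therefore $t_{|A'}=0$ and $\rho(\xi)$ is supported on $\sD^{A'}_{Hor}$. (On $A'$ one has moreover $\sF\cong R^1f_*\bigl(T_{X/B}(\sD^A_{Hor})\bigr)\otimes\omega_B$ by the argument of Lemma \ref{generalg}, which makes the local freeness transparent.)

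Finally, for the second assertion, if $\sF$ is torsion free on all of $A$ then it is already locally free there, so no shrinking is needed and the same finiteness-of-zeros argument gives $t=0$ on $A$, i.e. $\rho(\xi)$ is supported on $\sD^A_{Hor}$. I expect the main obstacle to be precisely this globalisation step: a priori $t$ could be a nonzero torsion element of $\sF$ concentrated over the finitely many points where $\Omega^1_{X/B}(-\sD^A_{Hor})$ fails to be locally free or where the fibre cohomology jumps, so fibrewise vanishing can only be upgraded to vanishing of the section after passing to $A'$ (or after imposing torsion-freeness); carefully matching $t\otimes\mC(b)$ with the fibrewise restriction of $\xi_b$ through base change is the technical heart of the argument.
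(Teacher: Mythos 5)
Your proposal is correct and takes essentially the same route as the paper: apply the Adjoint Theorem \cite[Theorem A]{RZ1} fibrewise to get $\xi_b$ supported on $D_b$ for general $b$, then upgrade this pointwise vanishing to vanishing of the section $t$ via the base-change identification and the torsion-freeness/local-freeness argument on a dense open $A'\subset A$ (which is precisely the step the paper compresses into ``the thesis follows easily''). The only point you gloss that the paper makes explicit is that for $\dim L>n$ the divisor $D_b^{i_1,\dots,i_n}$ produced by the Adjoint Theorem a priori depends on the chosen $n$-tuple, and it is \cite[Proposition 3.1.6]{PZ} together with the generic generation hypothesis that identifies it with $D_b$; the paper also first passes to the finite covering trivializing $\mL$ (Remark \ref{mono}), a step your purely local argument on $A$ avoids.
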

\begin{proof}
	From the hypotheses on $L$ we know that the monodromy is finite and we can work on the covering $\widetilde{X}$ which is projective. Here everything is defined globally, see Remark \ref{mono}. 
	
	Now choose generic $\eta_{i_1}, \dots, \eta_{i_n}$ linearly independent elements of $L$. They are Massey trivial by hypothesis hence by the Adjoint Theorem  \cite[Theorem A]{RZ1} we have that on a smooth fiber $X_b$ the infinitesimal deformation $\xi_b$ is supported on a divisor $D_b^{i_1,\dots,i_n}$, defined as the fixed part of the $n$ sections $\eta_{i_1}\wedge \dots\wedge\widehat{\eta_{i_j}}\wedge\dots\wedge \eta_{i_n}$. By \cite[Proposition 3.1.6]{PZ}, if $L$ generically generates $\Omega^1_{X_b}$, it turns out that actually  $D_b^{i_1,\dots,i_n}$ does not depend on the choice of the $\eta_i$ and it is exactly the divisor $D_b$.
	
	We have proved  that $\xi_b$ is supported on $D_b$ which of course is the restriction of ${\sD_{Hor}}$ on the fiber $X_b$. The thesis follows easily.
\end{proof}

\begin{rmk}
\label{remagg}
One could also add the strictness hypothesis and state the theorem globally over $\widetilde{B}$.
%	One could also drop the strictness hypothesis and state the theorem locally over $A\subset B$ instead of considering the covering, that is, under the same hypothesis, $\rho(\xi)$ is in the kernel 
%%	\begin{equation}
%%		\rho(\xi)\in \Ker H^0(A,\ext^1_f(\Omega^1_{f^{-1}(A)/A},f^*\omega_A))\to H^0(A,\ext^{1}_f(\Omega^1_{f^{-1}(A)/A}(-\sD_{Hor}),f^*\omega_A)).
%%	\end{equation}
%	\begin{equation}
%	\rho(\xi)\in \Ker H^0(A',\ext^1_f(\Omega^1_{X/B},f^*\omega_B))\to H^0(A',\ext^{1}_f(\Omega^1_{X/B}(-\sD^A_{Hor}),f^*\omega_B))
%\end{equation} for $A'\subset A$ a suitable open dense subset.  This is the first part of Theorem \ref{A}.
\end{rmk}
%DA QUI
%We recall that a smooth  irregular variety $X$ is said to be of {\it{maximal Albanese dimension}}
%if ${\rm{dim}}\, {\rm{alb}}(X) = {\rm{dim}}\, X$. If ${\rm{dim}}\, {\rm{alb}}(X) = {\rm{dim}}\, X$ and
% ${\rm{alb}}\colon X\to {\rm{Alb}}(X)$ is {\it not surjective}, that is $q(X)> {\rm{dim}}\, X$, $X$ is said to be of {\it{Albanese general type}}. 
%A fibration $f \colon X \to Z$  is called a {\it{higher  irrational pencil}} if $Z$ is of Albanese general type. 
%An irregular variety $X$ is said to be{\it{ primitive}} if it does not admit any higher irrational pencil
Recall that we say that  $L\leq H^0(X_b, \Omega^1_{X_b})$ is strongly strict if the map $\bigwedge^{n-1} L\to H^0(X_b,\omega_{X_b})$ is an isomorphism on the image.
\begin{cor}
Let $f\colon X\to B$ be a family such that the general fiber $X_b$ is a variety of general type with $p_g(X_b)=\rank L=n$. If $L$ is strongly strict and $\Omega^1_{X_b}$ is generated by the elements of $L$, then $f$ is isotrivial on an appropriate dense open set of the base.
\end{cor}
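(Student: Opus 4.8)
The plan is to leverage Theorem \ref{aggiuntarel} together with the special features of the \emph{strongly strict} hypothesis. First I would observe that since $L$ generically generates $\Omega^1_{X_b}$ and $\rank L = n = \dim X_b + 1$, the $n$ sections $\eta_1,\dots,\eta_n$ forming a basis of $L$ are in generic position, so I can apply Theorem \ref{aggiuntarel} once I verify that $L$ is Massey trivial. The key point is that strong strictness \emph{forces} Massey triviality here: since $p_g(X_b) = n = \rank L$ and the wedge map $\bigwedge^{n-1} L \to H^0(X_b,\omega_{X_b})$ is an isomorphism onto its image, the space of relations available to express the Massey product $\eta_1\wedge\dots\wedge\eta_n$ as a combination $\sum_i \omega_i \otimes \sigma_i$ is maximally constrained, and the top wedge $\bigwedge^n L \to H^0(X_b,\omega_{X_b})$ lands in a one-dimensional space generated by the $\omega_i$. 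I would spell out that under these dimension constraints the adjoint image must be Massey trivial, since otherwise it would produce an independent section contradicting $p_g(X_b)=n$.

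Once Massey triviality is established, Theorem \ref{aggiuntarel} gives that $\rho(\xi)$ is supported on $\sD^{A'}_{Hor}$ for a suitable dense open $A'\subset A$. The next step is to identify the divisor $\sD_{Hor}$ in this strongly strict situation. Because the wedge map is an isomorphism on its image, the sections $\eta_{i_1}\wedge\dots\wedge\eta_{i_{n-1}}$ that cut out $D_b$ have \emph{no common fixed part}: strong strictness precisely rules out a nontrivial base divisor, so $D_b = 0$ on the general fiber $X_b$. Consequently the support divisor $\sD_{Hor}$ restricts to the zero divisor on the general fiber, which means $\xi_b$ is supported on the trivial divisor, i.e. $\xi_b = 0$ in $H^1(X_b, T_{X_b})$ for general $b$.

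The final step is to pass from the pointwise vanishing of the Kodaira-Spencer class to isotriviality. Having $\xi_b = 0$ for all $b$ in a dense open subset of the base means the ordinary Kodaira-Spencer map (\ref{ks}) vanishes at every such point. By the general result recalled after Lemma \ref{generalg}, the vanishing of the Kodaira-Spencer map on an open set together with the regularity of the family (here guaranteed because all general fibers are varieties of general type with the same Hodge-theoretic invariants, so $h^1(X_b,T_{X_b})$ is constant) implies that the Global Kodaira-Spencer map $\rho(\xi)$ itself vanishes on that open set, and hence the family is locally trivial there. This is exactly the statement that $f$ is isotrivial on an appropriate dense open subset of $B$.

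I expect the main obstacle to be the first step: rigorously extracting Massey triviality from strong strictness under the sharp numerical equality $p_g(X_b) = \rank L = n$. One must argue carefully that the isomorphism $\bigwedge^{n-1} L \xrightarrow{\sim} \Ima \subseteq H^0(X_b,\omega_{X_b})$, combined with $\dim H^0(X_b,\omega_{X_b}) = n$, leaves no room for the Massey product to escape the submodule $\sW$ generated by the $\omega_i$; the subtlety is that the relevant multiplication and wedge maps interact at the level of the generated module rather than of a single linear space, so the counting argument must be performed in $f_*\omega_X$ and then restricted to the general fiber.
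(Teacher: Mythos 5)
Your overall route is the paper's: extract Massey triviality from strong strictness via the dimension count, feed it into Theorem \ref{aggiuntarel} with $D_b=0$, and conclude that $\rho(\xi)$ is supported on the empty divisor, hence vanishes on a dense open set, hence isotriviality. Your first step is correct in substance, though the phrasing ``the top wedge lands in a one-dimensional space generated by the $\omega_i$'' is garbled; the clean statement (and the paper's) is that strong strictness plus $\dim\bigwedge^{n-1}L=\binom{n}{n-1}=n=p_g(X_b)$ forces $\bigwedge^{n-1}L\cong H^0(X_b,\omega_{X_b})$, so the $\omega_i$ span \emph{all} of $H^0(X_b,\omega_{X_b})$ and the Massey product, being a canonical form on the fiber, has nowhere else to go. Your worry about module versus fiberwise counting resolves the way you suspect: since the $\omega_i|_{X_b}$ generate the fiber of $f_*\omega_X$ at general $b$, Nakayama gives $\sW=f_*\omega_X$ locally on a dense open set, so membership in $\sW$ in the sense of Definition \ref{mtrivial} follows.

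The genuine flaw is your derivation of $D_b=0$. You write that ``strong strictness precisely rules out a nontrivial base divisor,'' and that is false: injectivity (even isomorphism onto the image) of $\bigwedge^{n-1}L\to H^0(X_b,\omega_{X_b})$ is a linear-independence statement about the wedge sections and says nothing about their common zeros. Indeed, in your situation strong strictness gives $\bigwedge^{n-1}L\cong H^0(X_b,\omega_{X_b})$, and the wedges would still share a divisorial zero whenever the full canonical system $|K_{X_b}|$ has a fixed part. The correct source of $D_b=0$ is the \emph{other} hypothesis, that $\Omega^1_{X_b}$ is generated by the elements of $L$: then the $(n-1)$-fold wedges generate $\omega_{X_b}$ at every point and have no common fixed component. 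This is exactly the paper's one-line argument, and since the hypothesis is available to you the repair is immediate. A smaller remark on your last step: once $\sD_{Hor}$ restricts to zero on the general fiber it is the zero divisor, and by Definition \ref{classe} being supported on the empty divisor means $\rho(\xi)|_{A'}$ lies in the kernel of the identity map, i.e.\ $\rho(\xi)|_{A'}=0$ outright. Your detour through pointwise vanishing of $\xi_b$ followed by a regularity argument (constancy of $h^1(X_b,T_{X_b})$ on a dense open set by semicontinuity) is workable but unnecessary, and it needlessly exposes you to exactly the pitfall the paper warns about after Lemma \ref{generalg}.
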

\begin{proof}
For such an $X_b$ it is not difficult to see that $L$ is Massey trivial and  $D_b=0$, see for example \cite[Corollary 2.2.2]{RZ1}. The idea is that if we take a basis $\eta_1,\dots,\eta_n$ of $L$, $\bigwedge^{n-1} L\cong H^0(X_b,\omega_{X_b})$ and this implies that the $\eta_i$ are necessarily Massey trivial.
 $D_b=0$ since $L$ generates $\Omega^1_{X_b}$.

 Hence by Theorem \ref{aggiuntarel} we have that $\rho(\xi)$ is supported on an empty divisor, that is $\rho(\xi)$ is trivial.
 
 This means that the fibration is isotrivial on an appropriate open set of the base.
\end{proof}

Another straightforward application of Theorem \ref{reladjquadthm} gives the following result which finally relates the notion of relative adjoint quadric to the notion of being supported on ${\sD_{Hor}}$.
\begin{cor}
	Let $L\leq\Gamma(A,\mD^1)$ be a vector space. Assume that $L$ generically generates $\Omega^1_{X_b}$ on the general fiber and that there are no relative adjoint quadrics. 
	Then $\rho(\xi)$ is supported on $\sD^{A'}_{Hor}$. Furthermore if  $\ext^{1}_f(\Omega^1_{{{X}}/{{B}}}(-\sD^{A}_{Hor}),{f}^*\omega_{{B}})$ is torsion free, then $\rho(\xi)$ is supported on $\sD^{A}_{Hor}$. 
%	
%	Then $\rho(\xi)_{|B'}$ is supported on ${\widetilde{\sD}{_{Hor}}}_{|\widetilde{B}'}$. Furthermore assume that $\ext^{1}_f(\Omega^1_{{\widetilde{X}}/{\widetilde{B}}}(-\widetilde{\sD}_{Hor}),f^*\omega_{\widetilde{B}})$ is torsion free, then $\rho(\xi)$ is supported on ${\widetilde{\sD}{_{Hor}}}$.
\end{cor}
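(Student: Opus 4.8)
The plan is to prove the statement in two moves: first upgrade the hypothesis ``no relative adjoint quadrics'' into the Massey triviality of the whole space $L$, and then feed this into Theorem \ref{aggiuntarel}, which already packages the passage from Massey triviality to the support of $\rho(\xi)$ on the horizontal divisor. In this way the corollary is obtained by chaining Theorem \ref{reladjquadthm} with Theorem \ref{aggiuntarel}, in exactly the way Corollary \ref{cornag} chains Theorem \ref{reladjquadthm} with the finiteness statement of \cite{RZ4}.

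For the first move I would argue as in the proof of Corollary \ref{cornag}. Fix any $n$ linearly independent sections $\eta_{i_1},\dots,\eta_{i_n}$ of $L$ and form their Massey product $\omega=s_{i_1}\wedge\dots\wedge s_{i_n}$ with respect to the chosen splitting of Sequence (\ref{split}). By Theorem \ref{reladjquadthm}, provided $\omega$ is locally liftable, the absence of relative adjoint quadrics forces $\eta_{i_1},\dots,\eta_{i_n}$ to be Massey trivial; since the $n$-tuple was arbitrary, Definition \ref{mastriv} gives that $L$ itself is a Massey trivial vector space. The only delicate point here --- and the main obstacle of the argument --- is the local liftability of $\omega$, which is a genuine hypothesis of Theorem \ref{reladjquadthm} but does not appear explicitly in the statement of the corollary. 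On the general fiber this liftability is equivalent to the vanishing of the cup product of the adjoint (residue) form of $\omega$ with the Kodaira--Spencer class $\xi_b$ in $H^1(X_b,\Omega^{n-2}_{X_b})$, i.e. to the relative $(n-1)$-form underlying $\omega$ lifting to an \emph{absolute} form in $f_*\Omega^{n-1}_X$. I would therefore either carry local liftability as a standing assumption, exactly as in Corollary \ref{cornag}, or record that it is automatic in the cases of interest --- for instance when the Massey product lands in $\mD^{n-1}\otimes\omega_B$, where liftability to $f_*\Omega^{n-1}_{X,d}$ is guaranteed as in the remark following Diagram (\ref{diagrammaquadriche}).

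Once $L$ is known to be Massey trivial, the second move is immediate. Indeed $L\leq\Gamma(A,\mD^1)$ is Massey trivial and, by hypothesis, generically generates $\Omega^1_{X_b}$ on the general fiber, so Theorem \ref{aggiuntarel} applies verbatim and yields that $\rho(\xi)$ is supported on $\sD^{A'}_{Hor}$ for a suitable open dense $A'\subset A$. The refinement follows from the same theorem: when the relative extension sheaf $\ext^{1}_f(\Omega^1_{X/B}(-\sD^{A}_{Hor}),f^*\omega_B)$ is torsion free there is no room for the torsion phenomenon that obstructs the passage from the generic fiber to all of $A$, and the support then holds on $\sD^{A}_{Hor}$. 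No further input is needed, which is precisely why the corollary is ``straightforward'' once Theorem \ref{reladjquadthm} is available: the entire difficulty is concentrated in securing the local liftability hypothesis discussed above.
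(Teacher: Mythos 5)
Your proof is correct and follows essentially the same route as the paper, which offers no written proof at all but introduces the corollary as a ``straightforward application'' of Theorem \ref{reladjquadthm} --- i.e.\ precisely your two moves: absence of relative adjoint quadrics gives Massey triviality of every $n$-tuple in $L$, hence of $L$, and then Theorem \ref{aggiuntarel} applies verbatim, including its torsion-free refinement. Your flag on local liftability is a legitimate catch rather than a flaw in your argument: it is a genuine hypothesis of Theorem \ref{reladjquadthm} that appears explicitly in Corollary \ref{cornag} but is silently dropped from this corollary's statement, so carrying it as a standing assumption (or noting it holds when the Massey product lands in $\mD^{n-1}\otimes\omega_B$) is exactly the right way to make the statement airtight.
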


Finally we prove a viceversa of Theorem \ref{aggiuntarel}. These two results together are Theorem \ref{A}.
\begin{thm}
	\label{viceversa}
Assume that $L\leq\Gamma(A,\mD^1)$ is a vector space and $\rho(\xi)$ is supported on ${\sD^A_{Hor}}$. If $f_*\sO_X(\sD^A_{Hor})$ is a line bundle then the vector space $L$ is Massey trivial.
\end{thm}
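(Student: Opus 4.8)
The plan is to reduce the global support condition over $A$ to a fiberwise statement, to invert the Adjoint Theorem on each general fiber, and finally to glue the resulting fiberwise relations into a single identity over $A$; the last step is where the line bundle hypothesis enters.

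First I would invoke the remark following Definition \ref{supportato3}: since $\rho(\xi)$ is supported on $\sD^A_{Hor}$, for general $b\in A$ the class $\xi_b\in H^1(X_b,T_{X_b})$ is supported on $D_b=\sD^A_{Hor}|_{X_b}$, that is $\xi_b\in\Ker\big(H^1(X_b,T_{X_b})\to H^1(X_b,T_{X_b}(D_b))\big)$. Concretely, restricting Diagram (\ref{qui}) to $X_b$ shows that the extension $0\to\sO_{X_b}\otimes T_{B,b}^\vee\to\sE|_{X_b}\to\Omega^1_{X_b}(-D_b)\to 0$ splits. Applying the converse implication of the Adjoint Theorem \cite[Theorem A]{RZ1} on each such fiber, the support of $\xi_b$ on $D_b$ forces the restricted forms $\eta_i|_{X_b}$ to be Massey trivial on $X_b$: the fiberwise Massey product $(s_1\wedge\dots\wedge s_n)|_{X_b}$ lies in the submodule of $H^0(X_b,\omega_X|_{X_b})$ generated by the $\omega_i|_{X_b}\otimes T_{B,b}^\vee$. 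In the language of Definition \ref{mtrivial} this says exactly that the image of the global Massey product $\omega=s_1\wedge\dots\wedge s_n\in\Gamma(A,f_*\omega_X)$ in the quotient $f_*\omega_X/\sW$ vanishes at the generic point of $A$, hence is a torsion section.

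The main obstacle is the final globalization: I must upgrade the pointwise relations $\omega|_{X_b}=\sum_i\omega_i|_{X_b}\otimes\lambda_i(b)$ to an honest identity $\omega=\sum_i\omega_i\otimes\sigma_i$ with $\sigma_i\in\Gamma(A,\omega_B)$, equivalently to show that the torsion section above is zero, and this is exactly where the hypothesis that $f_*\sO_X(\sD^A_{Hor})$ is a line bundle is needed. By Remark \ref{remark} both $\omega$ and every generator $\omega_i\wedge\sigma$ of $\sW$ vanish along $\sD^A_{Hor}$, so all of them lie in $f_*\big(\omega_X(-\sD^A_{Hor})\big)$; since $f_*\sO_X(\sD^A_{Hor})$ has rank one, base change gives $h^0(X_b,\sO_{X_b}(D_b))=1$ for general $b$, so the canonical section $s_{\sD}$ of $\sO_X(\sD^A_{Hor})$ is fiberwise the unique effective representative of $D_b$. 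This rigidity lets me compare $\sW$ with the module obtained after removing the common fixed divisor $\sD^A_{Hor}$ and conclude that the relevant quotient of $f_*\omega_X$ is torsion free, whence the torsion section must vanish and $\omega\in\Gamma(A,\sW)$, i.e. $L$ is Massey trivial. The delicate points to verify are precisely that the rank-one condition makes the division by $s_{\sD}$ canonical and that the reduced quotient is genuinely torsion free; this is the analogue, for the converse direction, of the torsion freeness hypothesis appearing in Theorem \ref{aggiuntarel}.
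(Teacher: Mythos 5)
There is a genuine gap, and it sits exactly at the step you treat as routine. You invoke ``the converse implication of the Adjoint Theorem \cite[Theorem A]{RZ1}'' to pass from $\xi_b$ being supported on $D_b$ to fiberwise Massey triviality, with no hypothesis on $D_b$. But the Adjoint Theorem is one-directional (Massey triviality of the $\eta_i$ implies $\xi_b$ is supported on the fixed divisor); the reverse implication is false in general and requires a rigidity hypothesis of the type $h^0(X_b,\sO_{X_b}(D_b))=1$ --- this is the content of the fiberwise inverse statements in \cite[Theorem 1.5.1]{PZ} and their generalization in \cite{RZ2}, and it is the whole point of the rigidity phenomenon studied in \cite{victor}. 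In other words, the line bundle hypothesis $f_*\sO_X(\sD^A_{Hor})$ of rank one is not, as you place it, a tool for the final gluing: in the paper's proof it is consumed precisely at the fiber level, where it yields $h^0(X_b,\sO_{X_b}(D_b))=1$ and forces the contractions $\theta_b(s_i)\in H^0(X_b,\sO_{X_b}(D_b))$ to be constant multiples of the canonical section of $\sO_X(\sD^A_{Hor})$. Your fiberwise step, as written, has no hypothesis left to run on. The gap is repairable --- deduce $h^0(X_b,\sO_{X_b}(D_b))=1$ for general $b$ from the rank-one hypothesis and then cite the actual fiberwise converse --- but the proposal as stated asserts an implication that does not exist.

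The second weak point is your globalization. The claim that the rank-one condition makes $f_*\omega_X/\sW$ (or your ``reduced quotient'') torsion free is asserted, not proved, and it does not follow: torsion in that quotient appears at special points $b_0$ where the restrictions $\omega_i|_{X_{b_0}}$ become linearly dependent, and the rank of $f_*\sO_X(\sD^A_{Hor})$ is a condition at the generic point which says nothing there. Note also that this machinery is heavier than needed: once you have the fiberwise relations at general $b$ with the rigidity hypothesis in place, holomorphicity of the coefficients on a dense open subset follows from base change and uniqueness of the coefficients on an open set where the span of the $\omega_i|_{X_b}$ has constant rank, which is all the paper's own conclusion amounts to. For comparison, the paper's proof avoids the fiberwise-plus-gluing strategy altogether: it applies $f_*\hhom(\cdot,f^*\omega_B)$ to the three-row diagram built from $\Omega^1_X$ and its twist by $-\sD^A_{Hor}$, uses the support hypothesis to lift the identity section of $\sO_B$ locally to $f_*\sE^\vee\otimes\omega_B$, pushes it to $\theta_b\in f_*T_X(\sD^A_{Hor})\otimes\omega_B$, and then a single commutative square involving $\alpha'(\theta)=\sum_i(-1)^i\theta(s_i)\otimes\omega_i$ and multiplication by $\omega$ produces the holomorphic relation $\omega=\sum_i\omega_i\wedge\sigma_i$ directly, with $h^0(X_b,\sO_{X_b}(D_b))=1$ guaranteeing that the coefficients $\theta_b(s_i)$ have poles exactly along $D_b$ and hence land in $\langle\omega_i\rangle\otimes\omega_B$.
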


%Since here the objective is to prove the condition of Massey triviality, we do not know in principle if the monodromy of $\mL$ and the covering $\widetilde{B}\to B$ are finite. 

\begin{proof}
We want to prove that $L$ is Massey trivial, that is every choice of $n$ linearly independent sections in $L$ is Massey trivial. We fix such a choice $\eta_1,\dots,\eta_n$ and start by considering the exact sequence 
\begin{equation}
\label{rel2}
	0\to f^*\omega_B\to \Omega^1_{X}\to\Omega^1_{X/B}\to0
\end{equation} and applying the functor $f_*\hhom(\cdot,f^*\omega_B)$ to obtain the long exact sequence 
\begin{equation}
	0\to f_*T_{X/B}\otimes \omega_B\to f_*T_{X}\otimes \omega_B\to \sO_B\to \ext^1_f(\Omega^1_{X/B},f^*\omega_B)\to \dots	
\end{equation} Recall that the image of $1\in H^0(B,\sO_B)$ is $\rho(\xi)\in H^0(B,\ext^1_f(\Omega^1_{X/B},f^*\omega_B))$.

We ask the reader to accept the following easier notation for the rest of this proof: $X:=f^{-1}(A)$ and $B:=A$, that is we restrict locally on $A$.

Sequence (\ref{rel2}) together with its tensor by $\sO_X(-\sD^A_{Hor})$ fits into the commutative diagram 
\begin{equation}
	\xymatrix{0\ar[r]&f^*\omega_B\ar[r]\ar@{=}[d]&\Omega^1_X\ar[r]&\Omega^1_{X/B}\ar[r]&0\\
		0\ar[r]&f^*\omega_B\ar[r]&\sE\ar[r]\ar[u]&\Omega^1_{X/B}(-\sD^A_{Hor})\ar@{=}[d]\ar[r]\ar[u]&0\\
0\ar[r]&f^*\omega_B(-\sD^A_{Hor})\ar[r]\ar[u]&\Omega^1_X(-\sD^A_{Hor})\ar[r]\ar[u]&\Omega^1_{X/B}(-\sD^A_{Hor})\ar[r]&0}
\end{equation}
Taking again the $f_*\hhom(\cdot,f^*\omega_B)$ we obtain 
\begin{equation}
	\xymatrix{0\ar[r]&f_*T_{X/B}\otimes \omega_B\ar[r]\ar[d]&f_*T_{X}\otimes \omega_B\ar[r]\ar[d]&\sO_B\ar@{=}[d]\ar[r]&\ext^1_f(\Omega^1_{X/B},f^*\omega_B)\ar[d]\\
		0\ar[r]&f_*T_{X/B}(\sD^A_{Hor})\otimes \omega_B\ar[r]\ar@{=}[d]&f_*\sE^\vee\otimes \omega_B\ar[r]\ar[d]&\sO_B\ar[r]\ar[d]&\ext^1_f(\Omega^1_{X/B}(-\sD^A_{Hor}),f^*\omega_B)\ar@{=}[d]\\
0\ar[r]&f_*T_{X/B}(\sD^A_{Hor})\otimes \omega_B\ar[r]&f_*T_{X}(\sD^A_{Hor})\otimes \omega_B\ar[r]&f_*\sO_X(\sD^A_{Hor})\ar[r]&\ext^1_f(\Omega^1_{X/B}(-\sD^A_{Hor}),f^*\omega_B)	}
\end{equation}
Thanks to this diagram we can interpret our hypothesis that $\rho(\xi)$ is supported on $\sD^A_{Hor}$ as follows.

As pointed out above, the identity element in the first row goes to $\rho(\xi)$ in $\ext^1_f(\Omega^1_{X/B},f^*\omega_B)$. By hypothesis $\rho(\xi)$ goes to zero in $\ext^1_f(\Omega^1_{X/B}(-\sD^A_{Hor}),f^*\omega_B)$, hence the identity element in the second row is in the image of the morphism $f_*\sE^\vee\otimes\omega_B\to \sO_B$. This means that if we take a point $b\in B$, we can, locally around $b$, find a lifting of the identity in $f_*\sE^\vee\otimes\omega_B$. We denote by $\theta_b$ the image of this local lifting in $f_*T_{X}(\sD^A_{Hor})\otimes \omega_B$.

Denote by $\bigwedge^{n-1}W$ the vector space with basis the sections $\omega_i=s_1\wedge\dots\wedge\widehat{s_i}\wedge \dots\wedge s_n$ as in Definition \ref{omegai} and consider the following commutative square
\begin{equation}
	\xymatrix{
		f_*T_{X}(\sD^A_{Hor})\otimes \omega_B\ar^{\alpha}[r]\ar^-{\alpha'}[d]&f_*\sO_X(\sD^A_{Hor})\ar^{\beta}[d]\\
	\bigwedge^{n-1}W\otimes f_*\sO_X(\sD^A_{Hor})\otimes\omega_B\ar^-{\beta'}[r]&f_*\omega_X	}
\end{equation}	The horizontal arrow $\alpha$ is the same as in the above diagram, and the  horizontal arrow $\beta'$ is given by the fact that the $\omega_i$ are elements of $f_*\Omega^{n-1}_X$ and furthermore $\sD^A_{Hor}$ is a divisor of common zeroes of $\omega_i\wedge \sigma$ for arbitrary $\sigma$ in $\omega_B$, that is we can see $\omega_i\wedge \sigma\in \bigwedge^{n-1}W\otimes\omega_B$ as an element of $f_*\omega_X(-\sD^A_{Hor})$.

The vertical arrow $\alpha'$ is given by taking a section $\theta$ of $f_*T_{X}(\sD^A_{Hor})\otimes \omega_B$ and sending it to
$$
\theta\mapsto \sum_i (-1)^i \theta(s_i)\otimes \omega_i
$$ where $\theta(s_i)$ indicates the contraction, since  $s_i$ is in $f_*\Omega^1_X$.

The vertical arrow $\beta$ is given by the Massey product $\omega$ since we recall  that $\omega$ vanishes on $\sD^A_{Hor}$, see Remark \ref{remark}.

With these definitions, it is not difficult to see that the square  commutes, hence $\beta\alpha(\theta_b)=\beta'\alpha'(\theta_b)$. On one side $\beta\alpha(\theta_b)=\beta(1)=\omega$ since $\theta_b$ is a lifting of the identity.

On the other side, note that we are working locally around the general point $b$. The germ of the section $\theta_b$ can be decomposed as a sum of elements of the form
$v_b\otimes \sigma_b$ with $v_b\in H^0(X_b,T_{X_b}(D_b))$ and $\sigma_b\in \omega_{B,b}$. Its image via $\alpha'$ is then a sum of sections $v_b(s_i)\otimes \sigma_b\otimes \omega_i$ where now $v_b(s_i)\in H^0(X_b,\sO_{X_b}(D_b))$. Since by hypothesis $f_*\sO_X(\sD^A_{Hor})$ is a line bundle, we have that $h^0(X_b,\sO_{X_b}(D_b))=1$. This implies that the poles of $v_b(s_i)$ are exactly the zeroes of $\omega_i\otimes \sigma_b$, hence the image via $\beta'$ is exactly an element of $\langle\omega_i\rangle\otimes \omega_B$.

Hence by the commutativity we conclude that the Massey product $\omega$ is in the submodule $\langle\omega_i\rangle\otimes \omega_B$, that is it is Massey trivial by Definition \ref{mtrivial}. 
\end{proof}

\subsection{Global supported deformations and morphisms to product varieties}
Now recall the Generalized Castelnuovo-de Franchis theorem, see \cite[Theorem 1.14]{Ca2} and \cite[Prop II.1]{Ran}. See also \cite[Theorem 5.6]{RZ4} for the following refined version.
\begin{thm}
	\label{cas2}
	Let $Z$ be an $n$-dimensional compact K\"ahler manifold and $w_1,\dots, w_l \in H^0 (Z,\Omega^1_Z)$ linearly independent 1-forms such that $w_{j_1}\wedge\dots\wedge w_{j_{k+1}}= 0$ for every $j_1,\dots,j_{k+1}$ and that no collection of $k$ linearly independent forms in the span of $w_1,\dots, w_{j_{k+1}}$ wedges to zero. Then there exists a holomorphic map $f\colon Z\to Y$ over a normal variety $Y$ of dimension $\dim Y=k$ and such that $w_i\in f^*H^0(Y,\Omega^1_Y)$. Furthermore $Y$ is of general type.
\end{thm}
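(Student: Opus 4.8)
The plan is to read the two wedge hypotheses as pointwise rank conditions and then to produce $f$ as the map attached to the foliation cut out by the forms. Set $W:=\langle w_1,\dots,w_l\rangle\subseteq H^0(Z,\Omega^1_Z)$. Since $Z$ is compact K\"ahler, every holomorphic $1$-form is $d$-closed, so all elements of $W$ are closed. The hypothesis $w_{j_1}\wedge\dots\wedge w_{j_{k+1}}=0$ says that the natural map $\bigwedge^{k+1}W\to H^0(Z,\Omega^{k+1}_Z)$ vanishes, whereas the hypothesis that no $k$ independent forms of $W$ wedge to zero says that $\bigwedge^{k}W\to H^0(Z,\Omega^{k}_Z)$ is injective on decomposable elements. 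Together these force the image $W(z)$ of $W$ in the cotangent space $\Omega^1_{Z,z}$ to have dimension exactly $k$ at a general point $z\in Z$; equivalently the $w_i$ generate a rank-$k$ coherent subsheaf $\mathcal F\subseteq\Omega^1_Z$ that is a subbundle on a dense open set $Z^\circ\subseteq Z$.

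Next I would build the fibration. On $Z^\circ$ the distribution $\ker\mathcal F\subseteq T_Z$ has corank $k$, and it is involutive: for $u,v$ local sections of $\ker\mathcal F$ and any $w_i$ one has $w_i([u,v])=-dw_i(u,v)=0$ because $dw_i=0$, so $[u,v]\in\ker\mathcal F$. Frobenius then yields a holomorphic foliation of dimension $n-k$, and closedness provides, locally, holomorphic first integrals $h_1,\dots,h_k$ with $dh_j$ spanning $\mathcal F$. The heart of the Castelnuovo--de Franchis method is to globalize these local first integrals: using the compactness of $Z$ and, as in \cite[Prop II.1]{Ran} and \cite[Theorem 1.14]{Ca2}, the Albanese map to organize the integration, one shows that the foliation is induced by a holomorphic map $f\colon Z\to Y$ onto a normal $k$-dimensional variety $Y$ with connected fibers equal to the closures of the leaves. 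Finally, since each $w_i$ is closed and vanishes along the fibers of $f$, the standard descent criterion for closed forms gives $w_i\in f^*H^0(Y,\Omega^1_Y)$, which is the first assertion.

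It remains to prove that $Y$ is of general type, which I expect to be the main obstacle. The descended closed $1$-forms define a generically finite morphism from a resolution $\tilde Y\to Y$ onto a $k$-dimensional subvariety $Y'$ of the complex torus $T$ that these forms generate. The hypothesis that no $k$ independent forms of $W$ wedge to zero transfers to the nonvanishing of $\bigwedge^k W\to H^0(\tilde Y,\omega_{\tilde Y})$, so $Y'$ carries a generically decomposable nonzero holomorphic $k$-form. I would then apply the Ueno--Kawamata structure theorem for subvarieties of tori: $Y'$ admits a fibration onto a base of general type whose fibers are translates of an abelian subvariety $S\subseteq T$, and $Y'$ is of general type exactly when $S=0$. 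If $S$ were positive-dimensional the descended forms would vanish along its translates, producing $k$ independent forms in $W$ with vanishing wedge and contradicting the nonvanishing above; hence $S=0$, $Y'$ is of general type, and so is $Y$.

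The two genuinely delicate points are the globalization in the second step -- upgrading the analytic foliation, with its singularities along $Z\setminus Z^\circ$ and the possible non-closure of its leaves, to an honest proper fibration over a normal variety -- and the general-type argument above, where the wedge nondegeneracy on $Z$ must be transported to the singular image through a resolution. These are exactly the contents packaged by \cite[Theorem 1.14]{Ca2}, \cite[Prop II.1]{Ran} and the refined version in \cite[Theorem 5.6]{RZ4}, which I would ultimately cite; the sketch above records the route I would take to reprove the statement from scratch.
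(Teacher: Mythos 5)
The paper offers no internal proof of Theorem \ref{cas2}: it is recalled verbatim from \cite[Theorem 1.14]{Ca2} and \cite[Prop II.1]{Ran}, with the refined ``general type'' conclusion taken from \cite[Theorem 5.6]{RZ4}. So your decision to sketch the standard route and ultimately cite those sources is consistent with what the paper does. Your first two steps are essentially correct: the two wedge hypotheses do force generic rank exactly $k$; the kernel distribution is involutive precisely because holomorphic $1$-forms on a compact K\"ahler manifold are closed (your computation $w_i([u,v])=-dw_i(u,v)=0$ is right, and this is where the K\"ahler hypothesis enters); and the genuinely hard point --- integrating the foliation to a proper holomorphic fibration onto a normal $Y$ --- is exactly what the cited results supply. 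For the record, the classical argument does this not via Frobenius but by writing $w_m=\sum_{j=1}^k g_{mj}w_j$ (with $w_1\wedge\dots\wedge w_k\neq 0$) for meromorphic $g_{mj}$, using $dw_m=0$ to show the $g_{mj}$ are first integrals of the foliation, and building the map from these meromorphic functions; your foliation-theoretic framing is equivalent in spirit and closer to the $p$-form variant in \cite{RZ5}.

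The one step that is wrong as written is the contradiction in your general-type argument. If $S\subseteq T$ is a positive-dimensional stabilizer of the image $Y'$, it is \emph{not} true that ``the descended forms would vanish along its translates'': the $\hat{w}_i$ span the invariant $1$-forms of $T$ restricted to $Y'$, and most of them are nonzero on $\mathrm{Lie}(S)$. The correct count is: the subspace of $\langle \hat{w}_1,\dots,\hat{w}_l\rangle$ annihilating $\mathrm{Lie}(S)$ has codimension at most $s:=\dim S$, hence dimension at least $l-s\geq k+1-s$ --- and here the implicit hypothesis $l\geq k+1$ is used, which your sketch never invokes, a symptom of the slip. These forms are pullbacks from $Y''\subset T/S$ with $\dim Y''=k-s$, so any $k-s+1$ linearly independent ones among them wedge to zero on $Y'$ (check this on a resolution of $Y''$, where $\Omega^{k-s+1}$ vanishes for dimension reasons); adjoining $s-1$ further independent forms from the span produces $k$ linearly independent forms whose $k$-wedge vanishes, and pulling back along the dominant map from $Z$ (injective on forms) contradicts the strictness hypothesis. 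With this repair --- which is exactly how the stronger hypothesis ``no $k$ independent forms wedge to zero'' upgrades Catanese's conclusion of Albanese general type to general type in \cite[Theorem 5.6]{RZ4} --- your outline is a faithful reconstruction.
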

We are now ready to prove Theorem \ref{B} which follows from the following corollary of Theorem \ref{viceversa}, and a nice interpretation of Theorem \ref{cas2}.
\begin{cor}
	\label{finito1}
	Consider $L\leq\Gamma(A, \mD^1)$ a strict vector subspace.
	Assume that $\rho(\xi)$ is supported on ${\sD^A_{Hor}}$ and that $f_*\sO_X(\sD^A_{Hor})$ is a line bundle. Then  for every $U\subseteq B$ open subset trivializing the local system $\mL$, there exists a fibration $h_U\colon f^{-1}(U)\to Y$ to a normal $n-1$ dimensional variety $Y$ of general type. Furthermore, up to a finite covering $\widetilde{X}\to X$, all the maps $h_U$ glue together to a map $h\colon \widetilde{X}\to Y$. 
\end{cor}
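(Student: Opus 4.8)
The plan is to reduce the whole statement to a single application of the Generalized Castelnuovo–de Franchis Theorem \ref{cas2} on a compact model, obtained after trivializing the local system $\mL$. First I would invoke Theorem \ref{viceversa}: since $\rho(\xi)$ is supported on $\sD^A_{Hor}$ and $f_*\sO_X(\sD^A_{Hor})$ is a line bundle, the space $L$ is Massey trivial. Combined with the strictness hypothesis, this says that $\mL$ is Massey trivial generated and strict, so by \cite[Theorem B]{RZ4} (see Remark \ref{mono}) its monodromy is finite. Hence the covering $\widetilde{B}\to B$ classified by $H_\mL$ is finite and $\tilde f\colon\widetilde{X}\to\widetilde{B}$ is a fibration of smooth projective, in particular compact K\"ahler, varieties. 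Over $\widetilde{B}$ the pulled-back local system is trivial, so the basis $\eta_1,\dots,\eta_l$ of $L$ (with $l\geq n$, since strictness requires $\dim L\geq n$) becomes a family of honest global sections.

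Next I would produce the forms to feed into Castelnuovo--de Franchis. Applying Proposition \ref{localglobal} to $L$, viewed as a strict Massey trivial subspace of $\Gamma(\widetilde{B},\mD^1)$, yields a distinguished global lifting $\widetilde{W}\leq\Gamma(\widetilde{B},\tilde f_*\Omega^1_{\widetilde{X},d})$, that is a space of \emph{closed} holomorphic $1$-forms $\tilde s_1,\dots,\tilde s_l$ on $\widetilde{X}$, characterized by the vanishing of $\bigwedge^n\widetilde{W}\to\Gamma(\widetilde{X},\omega_{\widetilde{X}})$. Thus every $n$-fold wedge of the $\tilde s_i$ vanishes. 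On the other hand strictness, being stable under the finite \'etale base change (cf. Remark \ref{remagg}), gives that $\bigwedge^{n-1}\widetilde{W}\otimes\omega_{\widetilde{B}}\to\tilde f_*\omega_{\widetilde{X}}$ is injective; since a nonzero decomposable element corresponds to $n-1$ linearly independent forms, this forces $\tilde s_{i_1}\wedge\dots\wedge\tilde s_{i_{n-1}}\wedge\tilde f^*\sigma\neq 0$, whence the $(n-1)$-fold wedge itself is nonzero, for \emph{every} collection of $n-1$ independent forms in $\widetilde{W}$.

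These are precisely the hypotheses of Theorem \ref{cas2} with $Z=\widetilde{X}$ (of dimension $n$) and $k=n-1$: all $(k+1)$-fold wedges vanish, while no $k$-fold wedge of independent forms does. I would therefore obtain a holomorphic map $h\colon\widetilde{X}\to Y$ onto a normal variety $Y$ of general type with $\dim Y=n-1$ and $\tilde s_i\in h^*H^0(Y,\Omega^1_Y)$; after replacing $h$ by its Stein factorization it becomes a fibration. This is the global map of the statement. Finally, for an open $U\subseteq B$ trivializing $\mL$ the covering $\widetilde{X}\to X$ splits over $f^{-1}(U)$, so that $f^{-1}(U)$ is identified with a connected component of $\tilde f^{-1}(\widetilde{U})$ inside $\widetilde{X}$; setting $h_U$ equal to the restriction of $h$ to this component produces the desired local fibrations, which by construction glue to $h$ on the finite cover.

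I expect the main obstacle to be the second paragraph: rigorously transferring the two wedge conditions from the relative, sheaf-level data over $A$ (Massey triviality and strictness of $L$, phrased through the maps $\nu$ and $\bigwedge^{n-1}L\otimes\omega_B\to f_*\omega_X$) to genuine statements about the global closed $1$-forms $\tilde s_i$ on the compact space $\widetilde{X}$, and in particular verifying that strictness survives the base change and delivers non-degeneracy for \emph{all} $(n-1)$-dimensional subspaces of $\widetilde{W}$, not merely for wedges of basis elements.
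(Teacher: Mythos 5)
Your proposal is correct and follows essentially the same route as the paper: Theorem \ref{viceversa} gives Massey triviality of $L$, Proposition \ref{localglobal} together with the finiteness of monodromy from \cite[Theorem B]{RZ4} produces the distinguished global \emph{closed} liftings on the finite cover $\widetilde{X}$ with vanishing $n$-fold wedges, and Theorem \ref{cas2} (with strictness supplying the non-degeneracy of the $(n-1)$-fold wedges) yields the map to the general type variety $Y$. The only organizational difference is that you apply Castelnuovo--de Franchis once, globally, on the compact $\widetilde{X}$ and then obtain the $h_U$ by restriction along the splitting of the covering over $U$ --- thereby sidestepping the non-compact version of Theorem \ref{cas2} that the paper must invoke (via \cite[Remark 5.7]{RZ4}, using closedness of the forms) to build $h_A$ on $f^{-1}(A)$ first and glue afterwards.
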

\begin{proof}
	By Theorem \ref{viceversa}, we know that $L$ is Massey trivial. In particular by Proposition \ref{localglobal}, there exists a unique lifting $\widetilde{L}$ such that the wedge 
	$$
	\bigwedge^n\widetilde{L}\to \Gamma(A,f_*\omega_X)
	$$ is zero. We recall that the sections in $\widetilde{L}$ can be seen as 1-forms in $\Omega^1_{X,d}$.
	
	Since their wedge is zero, we can then apply Theorem \ref{cas2} and this give a morphism with connected fibers $h_A\colon f^{-1}(A)\to Y$ onto a normal $n-1$ dimensional variety $Y$. Note that even if $f^{-1}(A)$ is not compact,  Theorem \ref{cas2} can still be applied because the sections in $\widetilde{L}$ are \textit{closed} by Proposition \ref{localglobal}. This is actually enough to ensure that the arguments of Theorem \ref{cas2} applies. In particular see \cite[Remark 5.7]{RZ4}.
	
	Given another open subset $U\subset B$ as in the statement, the proof is similar and relies on the fact that Proposition \ref{localglobal} basically allows to pass from a local to a global condition. More precisely, by \cite[Theorem B]{RZ4} the monodromy group $G_\mL$ is finite, hence the covering associated is also finite. Furthermore the sections of $L$ give global sections in $\widetilde{X}$. So by Proposition \ref{localglobal} applied on $\widetilde{B}$, the elements of $\widetilde{L}$ can be seen as global closed 1-forms on $\widetilde{X}$ such that 
		$$
	\bigwedge^n\widetilde{L}\to \Gamma(\widetilde{B},\tilde{f}_*\omega_{\widetilde{X}})
	$$ is zero. This is a global condition deriving from the local  condition of Massey triviality.
	
	Hence on $\widetilde{X}$ we can work globally and this gives the morphisms $h_U$ which glue together in a morphism $h$ on $\widetilde{X}$.
	
	Of course note that the $h_U$ in general are not unique due to monodromy, hence they glue together to give $h\colon \widetilde{X}\to Y$ but they may not glue to give a morphism $X\to Y$.
\end{proof}
This Corollary  is Theorem \ref{B}.
\begin{cor}
	\label{finito2}
	Let $L$ be a strict vector subspace as above. Assume that $\rho(\xi)$ is supported on ${\sD^A_{Hor}}$ and that $f_*\sO_X(\sD^A_{Hor})$ is a line bundle. Then there exists a generically finite surjective morphism $\widetilde{X}\to\widetilde{B}\times Y$. 
\end{cor}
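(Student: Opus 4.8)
The plan is to combine the two morphisms already at our disposal into a single product map and to verify generic finiteness by restricting to the fibres of $\tilde f$.

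First I would invoke Corollary \ref{finito1}: under the present hypotheses it produces a morphism $h\colon \widetilde X\to Y$ onto a normal $(n-1)$-dimensional variety $Y$ of general type, defined on the finite covering $\widetilde X\to X$ where everything is global (Remark \ref{mono}). Together with the base-changed fibration $\tilde f\colon \widetilde X\to\widetilde B$ this yields the product morphism
$$
\Phi:=(\tilde f,h)\colon \widetilde X\longrightarrow \widetilde B\times Y .
$$
Since $\dim\widetilde X=n=1+(n-1)=\dim(\widetilde B\times Y)$, it suffices to show that $\Phi$ is generically finite; surjectivity will then follow because $\widetilde X$ is projective, so $\Phi$ is proper and its image is a closed irreducible subvariety of $\widetilde B\times Y$ of dimension $n$, hence all of the (irreducible) target.

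The fibre of $\Phi$ over a point $(b,y)$ is exactly $(h|_{\widetilde X_b})^{-1}(y)$, so $\Phi$ is generically finite provided that for general $b$ the restriction $h|_{\widetilde X_b}\colon \widetilde X_b\to Y$ is generically finite; as $\widetilde X_b$ and $Y$ both have dimension $n-1$, this is in turn equivalent to $h|_{\widetilde X_b}$ being dominant. To establish dominance I would use that the global closed $1$-forms of $\widetilde L$ provided by Proposition \ref{localglobal} are pulled back from $Y$, i.e.\ $w_i=h^*\omega_i^Y$ for suitable $\omega_i^Y\in H^0(Y,\Omega^1_Y)$; restricting to a fibre gives $\eta_i=(h|_{\widetilde X_b})^*\omega_i^Y$, whence
$$
\eta_{i_1}\wedge\dots\wedge\eta_{i_{n-1}}=(h|_{\widetilde X_b})^*\bigl(\omega_{i_1}^Y\wedge\dots\wedge\omega_{i_{n-1}}^Y\bigr)
$$
is the pull-back of a section of $\omega_Y$. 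A morphism between $(n-1)$-dimensional varieties which pulls back a nonzero top form is necessarily dominant, so it is enough to exhibit one decomposable wedge that is a nonzero section of $\omega_{X_b}$ on the general fibre.

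This last point is where strictness enters and is the main obstacle: strictness (Definition \ref{strict}) only asserts that the sheaf map $\bigwedge^{n-1}L\otimes\omega_{B}\to f_*\omega_X$ is injective, which is a statement on $B$ and does not a priori restrict to injectivity on a fixed fibre. However, an injective sheaf map is in particular nonzero, and the image of a nonzero map of coherent sheaves on the curve $B$ is a nonzero subsheaf, hence nonzero at the generic point; therefore for general $b$ the induced linear map $\bigwedge^{n-1}L\to H^0(X_b,\omega_{X_b})$ is nonzero, so some decomposable $(n-1)$-fold wedge $\eta_{i_1}\wedge\dots\wedge\eta_{i_{n-1}}$ is a nonzero element of $H^0(X_b,\omega_{X_b})$ (this is the same non-vanishing that lets one read off $\dim Y=n-1$ from Theorem \ref{cas2}). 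Combining this with the previous paragraph shows $h|_{\widetilde X_b}$ is dominant, so $\Phi$ is generically finite, and by the dimension and properness argument above $\Phi$ is the desired generically finite surjective morphism $\widetilde X\to\widetilde B\times Y$ with $Y$ of general type.
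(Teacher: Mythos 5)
Your proposal is correct and follows essentially the same route as the paper: the paper's proof of Corollary \ref{finito2} is precisely that strictness forces $h$ from Corollary \ref{finito1} to be surjective on the general fiber $X_b$, whence $\tilde f\times h\colon \widetilde X\to\widetilde B\times Y$ is generically finite. Your additional details --- passing from injectivity of $\bigwedge^{n-1}L\otimes\omega_B\to f_*\omega_X$ at the generic point of the base to a nonzero decomposable wedge on a general fiber, and pulling back a nonzero top form to get dominance of $h|_{X_b}$ --- are exactly the implicit content of the paper's appeal to strictness.
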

\begin{proof}
	Note that the map $h$  from the previous corollary is surjective when restricted to the general fiber $X_b$ thanks to the strictness hypothesis. Hence the map $\tilde{f}\times h\colon \widetilde{X}\to \widetilde{B}\times Y$ is generically finite.
\end{proof}

\begin{rmk}
Consider the morphism $h\colon \widetilde{X}\to Y$ and $X_b$ a general fiber of $\tilde{f}$ over $b\in \widetilde{B}$. If the ramification of $h|_{X_b}$, denoted by $R_b$, is the restriction to $X_b$ of a divisor $\sR$ on $\widetilde{X}$ contained in the critical locus of $h$, then the deformation $\xi_b$ is trivial.

In fact in this case the pullback $h^*\omega_Y$ is not only contained in $\Omega^{n-1}_{\widetilde{X}}$ but also in $\Omega^{n-1}_{\widetilde{X}}(-\sR)$ and on the fiber $X_b$ we have the diagram
\begin{equation}\label{esattasplit}
	\xymatrix{
	0\ar[r]&\Omega^{n-2}_{X_b}(-R_b)\ar[r]&\Omega^{n-1}_{\widetilde{X}}{|_{X_b}}(-R_b)\ar[r]&\omega_{X_b}(-R_b)\ar[r]&0\\
&&(h^*\omega_Y{|_{X_b}})^{\vee\vee}\ar@{^{(}->}[u]\ar[ur]}
\end{equation}
The diagonal arrow is an isomorphism.
This gives the splitting of the exact sequence in (\ref{esattasplit}) which we note is associated to $\xi_b$ after tensoring by $R_b$.
\end{rmk}

\section{The case of volume forms on the fibers}
\label{sez5}

In this section we study the case of $n-1$ forms.
Note that there is a map given by taking the wedge exact sequence  
$$
\textnormal{Ext}^1(\Omega^1_{X/B},f^*\omega_B)\to \textnormal{Ext}^1(\Omega^{p}_{X/B},f^*\omega_B\otimes\Omega^{p-1}_{X/B} )
$$ 
but since our fibration $f\colon X\to B$ has in general singular fibers, this map is not necessarily an isomorphism.
By the same arguments of Lemma \ref{ss2}   we have a map
$$
\textnormal{Ext}^1(\Omega^{p}_{X/B},f^*\omega_B\otimes\Omega^{p-1}_{X/B} )\to H^0(B,\ext^1_f(\Omega^{p}_{X/B},f^*\omega_B\otimes\Omega^{p-1}_{X/B} ) ).
$$

It make sense to consider the image of $\rho(\xi)$ in $H^0(B,\ext^1_f(\Omega^{p}_{X/B},f^*\omega_B\otimes\Omega^{p-1}_{X/B} ) )$. We give the following definition
\begin{defn}
	We say that $\rho(\xi)$ is $p$-supported on a horizontal divisor $\sE$ in $f^{-1}(A)$  if 
	\begin{equation}
		\rho(\xi)\in \Ker H^0(A,\ext^1_f(\Omega^1_{X/B},f^*\omega_B))\to H^0(A,\ext^1_f(\Omega^{p-1}_{X/B}(-\sE),f^*\omega_B\otimes\Omega^{p-1}_{X/B} )).
	\end{equation}
\end{defn}

By Theorem \ref{proprieta} Point (2), it is also easy to see that we have a homomorphism
$$
H^0(A,\ext^1_f(\Omega^1_{X/B}(-\sE),f^*\omega_B))\to H^0(A,\ext^1_f((\Omega^{p}_{X/B}(-\sE),f^*\omega_B\otimes\Omega^{p-1}_{X/B} )) 
$$
hence being supported on $\sE$ according to Definition \ref{supportato3} implies being $p$-supported on $\sE$, but the converse may not be true.
Furthermore the previous results, as Theorem \ref{viceversa} and its corollaries, work only in the case where the general fiber is highly irregular, that is $h^0(X_b,\Omega^1_{X_b})\geq n$. In this section we show  similar results which consider the property of being $n-1$-supported and also work with top forms instead of $1$-forms. This result will use the analogue of the Castelnuovo-de Franchis theorem \ref{cas2} in the case of $n-1$-forms, that is \cite[Theorem 7.2]{RZ5}. For the reader benefit and for final use we recall the set up. 

Let $Z$ be a smooth variety $w_1,\dots, w_l \in H^0 (Z,\Omega^p_Z)$, $l\geq p+1$, be linearly independent $p$-forms such that $w_i\wedge w_j=0$ (as an element of $\bigwedge^2\Omega^p_Z$ and not of $\Omega_Z^{2p}$) for any choice of $i,j=1,\dots, l$. These forms generate a subsheaf of $\Omega^p_Z$ generically of rank $1$. Note that the quotients $w_i/w_j$ define a non-trivial global meromorphic function on $Z$ for every $i\neq j$, $i,j=1,\dots, l$. By taking the differential $d (w_i/w_j)$ we then get global meromorphic $1$-forms on $Z$. We assume that there exist $p$ of these meromorphic differential forms $d (w_i/w_j)$ that do not wedge to zero; if this is the case we call  the subset $\{w_1,\dots, w_l \}\subset H^0 (Z,\Omega^p_Z)$ $p$-strict. For this setting, this condition is  analogous to the strictness condition considered in Definition \ref{strict}.

We need Theorem 7.2 in \cite{RZ5}:
\begin{thm}
	\label{pcast}
	Let $Z$ be an $n$-dimensional smooth variety and let $\{w_1,\dots, w_l \}\subset H^0 (Z,\Omega^p_Z)$ be a $p$-strict set. Then there exists a rational map $f\colon Z\dashrightarrow Y$ over a $p$-dimensional smooth variety $Y$ of general type such that the $w_i$ are pullback of some holomorphic $p$-forms $\mu_i$ on $Y$, that is $w_i=f^*\mu_i$, where $i=1,\dots , l$.
\end{thm}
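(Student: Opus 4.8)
The plan is to reduce the statement to the classical Castelnuovo--de Franchis picture by differentiating the ratios of the $w_i$, the decisive input being that the holomorphic forms in play are $d$-closed (as on a compact Kähler $Z$, and as one arranges through closedness in the non-compact applications). First I would exploit the hypothesis $w_i\wedge w_j=0$ in $\bigwedge^2\Omega^p_Z$: it says the $w_i$ are pointwise proportional, so on the dense open set where $w_1\neq 0$ one writes $w_i=g_i\,w_1$ with $g_i:=w_i/w_1$ global meromorphic functions and $g_1=1$. Differentiating and using closedness $dw_i=0=dw_1$ gives
\begin{equation*}
0=dw_i=dg_i\wedge w_1+g_i\,dw_1=dg_i\wedge w_1,
\end{equation*}
so $dg_i\wedge w_1=0$, and hence $d(w_i/w_j)\wedge w_1=0$ as well. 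Thus at a general point $z$ all the meromorphic $1$-forms $dg_i$ lie in the associated subspace $S_z:=\{\beta:\beta\wedge w_1(z)=0\}$; by the standard linear-algebra fact on decomposable forms one has $\dim S_z\le p$, with equality precisely when $w_1(z)$ is decomposable, in which case $S_z$ is its support.

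Next I would bring in $p$-strictness. Since each $d(w_i/w_j)$ is a $\mathbb{C}(Z)$-linear combination of $dg_2,\dots,dg_l$, the hypothesis that some $p$ of the $d(w_i/w_j)$ do not wedge to zero forces at least $p$ of the $dg_i$ to be generically independent, so $\dim S_z\ge p$. Combined with the previous bound this yields $\dim S_z=p$: the form $w_1$ is generically decomposable and $S$ is the rank-$p$ subsheaf of $\Omega^1_Z$ spanned by the $dg_i$. In particular $\mathrm{trdeg}_{\mathbb{C}}\,\mathbb{C}(g_2,\dots,g_l)=p$, and $g_2,\dots,g_l$ define a dominant rational map $f\colon Z\dashrightarrow Y$ onto a $p$-dimensional target, which I would take to be a smooth projective model with $\mathbb{C}(Y)=\mathbb{C}(g_2,\dots,g_l)$.

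Then I would descend the forms and identify $Y$. Writing $g_i=f^*\bar g_i$ with $\bar g_i\in\mathbb{C}(Y)$, the functions $\bar g_i$ generate $\mathbb{C}(Y)$, so their differentials generate $\Omega^1_Y$ generically and $S=f^*\Omega^1_Y$ there; consequently $\det S=\bigwedge^p S=f^*\omega_Y$. Since $w_1$ generically generates $\bigwedge^p S$, it descends as $w_1=f^*\mu_1$, and then $w_i=g_i\,w_1=f^*(\bar g_i\mu_1)=:f^*\mu_i$. Finally, as $\mu_i/\mu_1=\bar g_i$ generate $\mathbb{C}(Y)$, the canonical map of $Y$ defined by $\langle\mu_1,\dots,\mu_l\rangle$ is birational onto its image, whence $\kappa(Y)=\dim Y$ and $Y$ is of general type.

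I expect the main obstacle to lie not in the algebraic heart above but in the birational bookkeeping: controlling the indeterminacy locus of $f$ and proving that the a priori meromorphic forms $\mu_i=\bar g_i\mu_1$ extend to genuine holomorphic $p$-forms on a suitable smooth model $Y$ (a pole and resolution analysis, using that $f^*\mu_i=w_i$ is holomorphic and $f$ dominant), together with checking that the generic decomposability of $w_1$ is compatible with this model so that the equalities $w_i=f^*\mu_i$ hold as identities of holomorphic forms.
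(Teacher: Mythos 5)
The paper contains no proof of this statement to compare against: Theorem \ref{pcast} is imported verbatim from [RZ5, Theorem 7.2]. That said, your sketch follows what is, judging from the paper's own gloss of [RZ5] (see the proof of Theorem \ref{abel}, where the kernel distribution, integrable by closedness, is invoked), essentially the same route: the generalized Castelnuovo--de Franchis mechanism, in which the meromorphic first integrals $w_i/w_1$ simultaneously define the map $f$ and make the corank-$p$ foliation algebraically integrable. Your chain of steps is sound: pointwise proportionality from $w_i\wedge w_j=0$ in $\bigwedge^2\Omega^p_Z$; $dg_i\wedge w_1=0$ from closedness; the linear-algebra bound $\dim S_z\le p$ with equality exactly when $w_1(z)$ is decomposable; $p$-strictness forcing equality and hence $\mathrm{trdeg}_{\mathbb{C}}\,\mathbb{C}(g_2,\dots,g_l)=p$; and the canonical-subsystem argument for general type is the standard one.

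One caution: the descent $w_1=f^*\mu_1$, which you dispatch in a single clause and partly defer to ``birational bookkeeping,'' is the one step whose mechanism is not bookkeeping. Writing $w_1=h\cdot f^*\bigl(d\bar g_{i_1}\wedge\cdots\wedge d\bar g_{i_p}\bigr)$ with $h$ meromorphic, you must apply closedness a \emph{second} time (using that $d\bar g_{i_1}\wedge\cdots\wedge d\bar g_{i_p}$ is closed) to get $dh\wedge f^*\bigl(d\bar g_{i_1}\wedge\cdots\wedge d\bar g_{i_p}\bigr)=0$, hence $dh\in S$, so $h$ is constant along the fibres of $f$; and to conclude $h=f^*\bar h$ you need the general fibre connected, i.e.\ you should take $\mathbb{C}(Y)$ to be the algebraic closure of $\mathbb{C}(g_2,\dots,g_l)$ in $\mathbb{C}(Z)$ (at the harmless cost that the map defined by the $\mu_i$ becomes generically finite onto its image rather than birational, which still yields $\kappa(Y)=\dim Y$). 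With that step made explicit, together with the routine resolution-of-indeterminacy argument you already flag for the holomorphicity of the $\mu_i$ on a smooth model, your proof is complete and consistent with the cited source's approach.
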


Now consider $L\leq\Gamma(A,\mD^{n-1})$ a vector space of $n-1$ forms and assume that the elements of $L$ are liftable to $f_*\Omega_{X,d}^{n-1}$ in the sequence
 \begin{equation}
	0\to \omega_B\otimes f_*\Omega^{n-2}_{X/B}\to f_*\Omega^{n-1}_{X,d}\to \mD^{n-1}\to0.
\end{equation} 
This liftability assumption is needed by Remark \ref{solle}. In analogy with the previous case, we will denote by $\eta_i$ the elements of a basis of $L$ and by $s_i$ a fixed choice of liftings.   We will also denote by $\sD^{A,(n-1)}_{Hor}$ the horizontal part of the divisor given by the common zeroes of $s_i\wedge \sigma$ where $\sigma$ is a section of $\omega_{B}.$ 

We can finally prove the following result, which immediately gives Theorem \ref{C}.
\begin{thm}
	\label{viceversap}
	Assume that $\rho(\xi)$ is $n-1$-supported on $\sD^{A,(n-1)}_{Hor}$ and that the $\eta_i$ are $n-1$-strict.
	If $f_*\hhom( \Omega^{n-1}_{X/B}(-\sD^{A,(n-1)}_{Hor}),\Omega^{n-2}_{X/B})$ is zero then there exists a meromorphic dominant map $f^{-1}(A)\dashrightarrow
	 Y$ to a smooth $n-1$ dimensional variety $Y$ of general type. 
\end{thm}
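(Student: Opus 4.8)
The plan is to reduce everything to the generalized Castelnuovo--de Franchis Theorem \ref{pcast}: I must produce, on $Z:=f^{-1}(A)$, a family of closed holomorphic $(n-1)$-forms that is $n-1$-strict and whose members wedge pairwise to zero inside $\bigwedge^2\Omega^{n-1}_Z$ (and not merely inside $\Omega^{2n-2}_Z$, which for $n\geq 3$ is automatic and says nothing). The natural candidates are the fixed closed lifts $s_i\in\Gamma(A,f_*\Omega^{n-1}_{X,d})$ of a basis $\eta_i$ of $L$; writing $\sD:=\sD^{A,(n-1)}_{Hor}$, the whole difficulty is to upgrade the \emph{relative} proportionality of the $\eta_i$ to an \emph{absolute} proportionality of the $s_i$, and this is exactly what the supportedness and the vanishing hypotheses are designed to provide.

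First I would set up the diagram parallel to the proof of Theorem \ref{viceversa}, but with the top wedge sequence $0\to f^*\omega_B\otimes\Omega^{n-2}_{X/B}\to\Omega^{n-1}_X\to\Omega^{n-1}_{X/B}\to 0$ in place of (\ref{rel2}). Let $\sE'\subseteq\Omega^{n-1}_X$ be the preimage of $\Omega^{n-1}_{X/B}(-\sD)$, so that $0\to f^*\omega_B\otimes\Omega^{n-2}_{X/B}\to\sE'\to\Omega^{n-1}_{X/B}(-\sD)\to 0$ has extension class equal to the image of $\rho(\xi)$ under wedge followed by the twist by $-\sD$. Applying $f_*\hhom(-,f^*\omega_B\otimes\Omega^{n-2}_{X/B})$ and using Theorem \ref{proprieta} (2), (4) produces the exact sequence whose connecting map $\partial$ sends $1$ to this image of $\rho(\xi)$. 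The point of the hypothesis $f_*\hhom(\Omega^{n-1}_{X/B}(-\sD),\Omega^{n-2}_{X/B})=0$ is that, by the projection formula, the first term $f_*\hhom(\Omega^{n-1}_{X/B}(-\sD),f^*\omega_B\otimes\Omega^{n-2}_{X/B})$ of this sequence vanishes, so the restriction map $f_*\hhom(\sE',f^*\omega_B\otimes\Omega^{n-2}_{X/B})\to f_*\hhom(f^*\omega_B\otimes\Omega^{n-2}_{X/B},f^*\omega_B\otimes\Omega^{n-2}_{X/B})$ is injective; the $n-1$-supportedness says $\partial(1)=0$, and hence $1$ lifts to a \emph{unique} retraction $\theta$ of the sub-object. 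Thus the twisted wedge sequence splits over $A$ and its kernel $\sL:=\Ker\theta\cong\Omega^{n-1}_{X/B}(-\sD)$ embeds as a subsheaf of $\Omega^{n-1}_X$ which is generically of rank one and restricts on the general fibre to $\omega_{X_b}(-D_b)\hookrightarrow\Omega^{n-1}_{X|X_b}$.

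Next I would extract the proportionality. Since $\Omega^{n-1}_{X/B}$ injects into the line bundle $\omega_{X/B}$ and is therefore generically of rank one, any two of the $\eta_i$ are proportional as relative forms and the ratios $\eta_i/\eta_j$ are global meromorphic functions on $Z$. The subsheaf $\sL$ is precisely an absolute lift of this generically rank-one relative sheaf: its local sections are pointwise proportional inside $\Omega^{n-1}_X$, so $\sigma\wedge\sigma'=0$ in $\bigwedge^2\Omega^{n-1}_X$ for all local sections $\sigma,\sigma'$ of $\sL$. It remains to see that the fixed \emph{closed} lifts $s_i$ lie in $\sL$, equivalently that $\theta(s_i)=0$. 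Here I would use that the $s_i$ are de Rham closed together with the uniqueness of $\theta$ coming from the vanishing: the difference $s_i-\widehat{s}_i$ between $s_i$ and its $\sL$-component lies in $f^*\omega_B\otimes\Omega^{n-2}_{X/B}$ and is closed, and the fibrewise rigidity $H^0(X_b,T_{X_b}(D_b))=0$ (the pointwise form of the vanishing) forces this difference to vanish, so that $s_i\in\Gamma(A,\sL)$ and $s_i\wedge s_j=0$ in $\bigwedge^2\Omega^{n-1}_X$. The ratios $s_i/s_j=\eta_i/\eta_j$ then coincide with the relative ones, so the $n-1$-strictness of the $\eta_i$ passes to the family $\{s_i\}$.

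Finally, with $\{s_1,\dots,s_l\}\subset H^0(Z,\Omega^{n-1}_Z)$ an $n-1$-strict family of closed forms wedging pairwise to zero in $\bigwedge^2\Omega^{n-1}_Z$, I would invoke Theorem \ref{pcast} to obtain a dominant rational map $Z=f^{-1}(A)\dashrightarrow Y$ onto a smooth $(n-1)$-dimensional variety $Y$ of general type with $s_i=h^*\mu_i$; as in Corollary \ref{finito1}, the non-compactness of $Z$ is harmless because the $s_i$ are closed (cf. \cite[Remark 5.7]{RZ5}). I expect the main obstacle to be the third paragraph, namely rigorously reconciling the unique algebraic retraction $\theta$ with the de Rham closedness of the $s_i$ so as to conclude the genuine vanishing $s_i\wedge s_j=0$ in $\bigwedge^2\Omega^{n-1}_X$ (rather than the automatic vanishing in $\Omega^{2n-2}_X$); the homological input localizes this to the fibrewise rigidity $H^0(X_b,T_{X_b}(D_b))=0$, which is exactly the content of the vanishing hypothesis, while checking that $n-1$-strictness and the applicability of Theorem \ref{pcast} survive the passage to the open total space is the remaining technical point.
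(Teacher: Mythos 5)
Your first two paragraphs track the paper's own proof: the same twisted wedge diagram, and the same mechanism whereby the $n-1$-supportedness of $\rho(\xi)$ lifts the natural injection $i\in f_*\hhom(\Omega^{n-2}_{X/B}(-\sD),\Omega^{n-2}_{X/B})$, while the vanishing $f_*\hhom(\Omega^{n-1}_{X/B}(-\sD),\Omega^{n-2}_{X/B})=\Ker\alpha=0$ makes that lift (your $\theta$, the paper's $h$) global on $A$ and unique. The gap is exactly where you flagged it, and it is genuine: you cannot conclude $\theta(s_i)=0$, i.e.\ that the fixed closed lifts already lie in $\sL=\Ker\theta$. The obstruction $\theta(s_i)$ is a section of $f^*\omega_B\otimes\Omega^{n-2}_{X/B}$, whose restriction to a smooth fibre lies in $\Omega^{n-2}_{X_b}\otimes T^\vee_{B,b}$; the fibrewise form of the vanishing hypothesis, $H^0(X_b,T_{X_b}(D_b))\cong H^0(X_b,\hhom(\omega_{X_b}(-D_b),\Omega^{n-2}_{X_b}))=0$, constrains homomorphisms out of $\omega_{X_b}(-D_b)$ --- which is precisely why $\theta$ is unique --- but says nothing about sections of $\Omega^{n-2}_{X_b}$. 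Already for $n=2$ one has $\Omega^{n-2}_{X/B}=\sO_X$ and $H^0(X_b,\sO_{X_b})=\mC\neq 0$, so nothing forces $\theta(s_i)$ to vanish; moreover the $\sL$-component $\widehat{s}_i$ has no reason to be closed, so the difference $s_i-\widehat{s}_i$ need not be closed either, and both halves of your rigidity argument collapse. (If $\theta(s_i)=0$ always held, the original lifts would wedge to zero and the whole correction machinery would be pointless.)

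The paper avoids this step entirely: rather than proving $s_i\in\sL$, it \emph{replaces} the liftings by $\tilde{s}_i:=s_i-h(s_i)$. These still lift $\eta_i$, since the correction lies in the sub-object $f^*\omega_B\otimes\Omega^{n-2}_{X/B}$; they lie in $\Ker h$, and the local identity $s_1\wedge s_2+h(s_1)\wedge h(s_2)=s_1\wedge h(s_2)-s_2\wedge h(s_1)$ gives $\tilde{s}_i\wedge\tilde{s}_j=0$ in $\bigwedge^2\Omega^{n-1}_X$, with the uniqueness of $h$ (this is where the vanishing hypothesis is actually spent, not on killing $h(s_i)$) guaranteeing that a single correction works simultaneously for all pairs. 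Since $\tilde{s}_i/\tilde{s}_j=\eta_i/\eta_j$, the $n-1$-strictness transfers to the corrected family and Theorem \ref{pcast} applies on the non-compact $f^{-1}(A)$ exactly as in your final paragraph. In other words, your own $\widehat{s}_i$ are already the right forms: the proposal becomes correct, and coincides with the paper's proof, once you delete the false claim $s_i=\widehat{s}_i$ and run the wedge computation with the corrected lifts instead.
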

%\begin{rmk}
%	\label{ipotesi}
%	Regarding the vanishing of $f_*\hhom( \Omega^{n-1}_{X/B}(-\sD^{A,(n-1)}_{Hor}),\Omega^{n-2}_{X/B})$, note that on the smooth fibers, this vanishing is equivalent to the vanishing of the cohomology group $H^0(X_b, T_{X_b}(D_b^{n-1}))$, where $D_b^{n-1}$ is the restriction of $\sD^{A,(n-1)}_{Hor}$ on $X_b$.  When $X_b$ is of general type this can essentially be seen as a hypothesis on the normal sheaf of $D_b^{n-1}$. 
%\end{rmk}
\begin{proof}
Again as in Theorem \ref{viceversa}, we work on $A$ and we ask the reader to accept the simplification $X=f^{-1}(A)$. We also denote $\sD^{A,(n-1)}_{Hor}$ by simply by $\sD^{A}_{Hor}$ for convenience.

	Consider the exact sequence
$$
0\to f^*\omega_{{B}}\otimes \Omega^{n-2}_{{X}/{B}}\to \Omega^{n-1}_{{X}}\to \Omega^{n-1}_{{X}/{B}}\to 0,
$$ the $n-1$-th wedge of Sequence \ref{rel2}. This sequence together with its tensor by $\sO_{{X}}(-\sD^A_{Hor})$ fits into the commutative diagram 
\begin{equation}
	\xymatrix{0\ar[r]&f^*\omega_B\otimes \Omega^{n-2}_{X/B}\ar[r]\ar@{=}[d]&\Omega^{n-1}_X\ar[r]&\Omega^{n-1}_{X/B}\ar[r]&0\\
		0\ar[r]&f^*\omega_B\otimes \Omega^{n-2}_{X/B}\ar[r]&\sE\ar[r]\ar[u]&\Omega^{n-1}_{X/B}(-\sD^A_{Hor})\ar@{=}[d]\ar[r]\ar[u]&0\\
		0\ar[r]&f^*\omega_B\otimes \Omega^{n-2}_{X/B}(-\sD^A_{Hor})\ar[r]\ar[u]&\Omega^{n-1}_X(-\sD^A_{Hor})\ar[r]\ar[u]&\Omega^{n-1}_{X/B}(-\sD^A_{Hor})\ar[r]&0}
\end{equation}
We apply the functor $f_*\hhom(\cdot,f^*\omega_B\otimes \Omega^{n-2}_{X/B})$ and obtain the diagram
\begin{scriptsize}
\begin{equation}
	\xymatrix{&f_*\hhom(\Omega^{n-1}_X,\Omega^{n-2}_{X/B})\otimes \omega_B\ar[r]\ar[d]&f_*\hhom( \Omega^{n-2}_{X/B}, \Omega^{n-2}_{X/B})\ar@{=}[d]\ar[r]&\ext^1_f(\Omega^{n-1}_{X/B},f^*\omega_B\otimes\Omega^{n-1}_{X/B} )\ar[d]\\
		&f_*\hhom(\sE,\Omega^{n-2}_{X/B})\otimes \omega_B\ar[r]\ar[d]&f_*\hhom( \Omega^{n-2}_{X/B}, \Omega^{n-2}_{X/B})\ar[r]\ar[d]&\ext^1_f(\Omega^{n-1}_{X/B}(-\sD^A_{Hor}),f^*\omega_B\otimes\Omega^{n-1}_{X/B} )\ar@{=}[d]\\
	0\ar[r]&	f_*\hhom(\Omega^{n-1}_X(-\sD^A_{Hor}),\Omega^{n-2}_{X/B})\otimes \omega_B\ar^-\alpha[r]&f_*\hhom( \Omega^{n-2}_{X/B}(-\sD^A_{Hor}), \Omega^{n-2}_{X/B})\ar[r]&\ext^1_f(\Omega^{n-1}_{X/B}(-\sD^A_{Hor}),f^*\omega_B\otimes\Omega^{n-1}_{X/B} )	}
\end{equation}
\end{scriptsize}

Note that in the middle sheaf of the top row, that is $f_*\hhom( \Omega^{n-2}_{X/B}, \Omega^{n-2}_{X/B})$, we have the identity element that we simply denote by $1$, its image in $f_*\hhom( \Omega^{n-2}_{X/B}(-\sD^A_{Hor}), \Omega^{n-2}_{X/B})$ is the natural injection and we will denote it by $i$. Exactly as in the proof of Theorem \ref{viceversa}, by the hypothesis on $\rho(\xi)$, the section $i$ is in the image of $\alpha$ and can be locally lifted to  $f_*\hhom(\Omega^{n-1}_X(-\sD^A_{Hor}),\Omega^{n-2}_{X/B})\otimes \omega_B$. Actually by our hypothesis on the vanishing of $f_*\hhom( \Omega^{n-1}_{X/B}(-\sD^{A}_{Hor}),\Omega^{n-2}_{X/B})=\ker\alpha$, this lifting is global (on $A$) and unique. We will denote it by $h$.

Now fix $\eta_1,\eta_2$ two sections of $L$ and $s_1,s_2$ the associated liftings. 
%\textcolor{red}{We have the following diagram
%\begin{equation}
%	\xymatrix{
%	h\cdot \sO_B\ar^-{\alpha}[r]\ar^-{\alpha'}[d]&i\cdot \sO_B\ar^{\beta}[d]\\
%		W\otimes f_*\Omega^{n-2}_{X/B}(\sD^A_{Hor})\otimes\omega_B\ar^-{\beta'}[r]&f_*\bigwedge^2\Omega^{n-1}_X	}
%\end{equation}
%The map $\beta$ is defined by $\beta(i)=s_1\wedge s_2$ and $\alpha'$ is defined by $\alpha'(h)=s_1\otimes h(s_2)-s_2\otimes h(s_1)$. Finally $\beta'$ is just the wedge product followed by the inclusion of $\Omega^{n-2}_{X/B}\otimes\omega_B$ in $f_*\Omega^{n-1}_X$.}
It is not difficult to see by a local computation that 
$$
s_1\wedge s_2+h(s_1)\wedge h(s_2)=s_1\wedge h(s_2)-s_2\wedge h(s_1).
$$
% \textcolor{red}{By hypotesis  $f_*\Omega^{n-2}_{X/B}(\sD_{Hor}^A)$ is a line bundle, hence $h(s_1)\wedge h(s_2)=0$ and we immediately get the desired commutativity.
%}
Hence by changing the liftings of the $\eta_i$ to $\tilde{s}_i=s_i-h(s_i)$ we have that $\tilde{s}_1\wedge\tilde{s}_2=0$. Since $h$ is unique and works for every pair of sections, repeating the same argument, we get that $\tilde{s}_i\wedge\tilde{s}_j=0$ for any pair of sections $\eta_i,\eta_j$ of $L$. Hence we can apply Theorem \ref{pcast} to get the map $f^{-1}(A)\dashrightarrow
	 Y$. As in Corollary \ref{finito1}, even if $f^{-1}(A)$ is not compact, the argument is the same since the forms $s_i$ are closed.
\end{proof}

In analogy with Corollary  \ref{finito2}, we want to study the situation on a suitable covering $\widetilde{X}$ of $X$. In particular, in analogy with Section \ref{sez4}, we note that up to appropriate covering the $\eta_i$ and $s_i$ are global and define a global horizontal divisor $\widetilde{D}_{Hor}$ on $\widetilde{X}$. The following corollary shows that this covering is finite.
\begin{cor}
	\label{globale}
	Under the same assumptions of the previous theorem, the covering  $\widetilde{X}\to X$ which trivializes the local system $\mL$ generated by $L$ is finite. On this covering assume that $\ext^1_{\tilde{f}}(\Omega^{n-1}_{\widetilde{X}/\widetilde{B}}(-\widetilde{D}_{Hor}),\tilde{f}^*\omega_{\widetilde{B}}\otimes\Omega^{n-1}_{\widetilde{X}/\widetilde{B}} )$ is torsion free or alternatively that $\widetilde{\rho(\xi)}$ is $n-1$-supported on $\widetilde{D}_{Hor}$. Then we  have a rational dominant map $h\colon \widetilde{X}\dashrightarrow Y$.
\end{cor}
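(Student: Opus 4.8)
The plan is to establish the two assertions in turn: first the finiteness of the covering $\widetilde{X}\to X$, and then the construction of the global rational map. Throughout I work under the hypotheses of Theorem \ref{viceversap}, so that on every contractible $A$ I already have, from that theorem, the meromorphic dominant map to a smooth $(n-1)$-dimensional variety $Y$ of general type, together with modified closed liftings $\tilde{s}_i$ of the $\eta_i$ satisfying $\tilde{s}_i\wedge\tilde{s}_j=0$ for all $i,j$, and exhibiting the $\eta_i$ as pullbacks of holomorphic $(n-1)$-forms $\mu_i$ on $Y$ via Theorem \ref{pcast} (the $(n-1)$-strictness hypothesis being exactly what is needed to apply it).

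For the finiteness I would argue by rigidity of the general-type target. Recall that, by the second Fujita decomposition (\ref{seconda}), the local system $\mD^{n-1}$ has unitary monodromy, hence so does the sub-local system $\mL$ generated by $L$; on the Galois covering $\widetilde{B}\to B$ with deck group $G_\mL$ that trivializes $\mL$ the stalk $\widehat{L}$ becomes a space of global closed $(n-1)$-forms on $\widetilde{X}$, on which $G_\mL$ acts faithfully through the monodromy representation by the very definition of $G_\mL$. The saturated subsheaf generated by $\widehat{L}$ is monodromy invariant, hence $G_\mL$-invariant, so for each deck transformation $\gamma$ the Castelnuovo--de Franchis fibration attached to $\gamma^{\ast}\widehat{L}=\widehat{L}$ coincides with the one attached to $\widehat{L}$; this produces a homomorphism $G_\mL\to\mathrm{Bir}(Y)$, $\gamma\mapsto\phi_\gamma$, with $\phi_\gamma\circ h=h\circ\gamma$. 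Its kernel fixes each $\tilde{s}_i=h^{\ast}\mu_i$, hence acts trivially on $\widehat{L}$ and is therefore trivial in $G_\mL$. Since $Y$ is smooth of general type its group of birational self-maps is finite, so $G_\mL$ embeds into a finite group and the covering is finite; this step parallels the role of \cite[Theorem B]{RZ4} in the $1$-form case. I expect this to be the main obstacle: one must verify that the fibration produced by Theorem \ref{pcast} is genuinely canonically attached to the $G_\mL$-invariant subsheaf generated by $\widehat{L}$ (so that the deck action really descends to a birational action on a single $Y$), which is the delicate point, since a priori the pairwise wedge vanishing is only guaranteed for $L$ and not automatically for all monodromy translates inside $\widehat{L}$.

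With $G_\mL$ finite, $\widetilde{X}$ is smooth projective, $\widetilde{D}_{Hor}$ is a genuine divisor, and the $\eta_i$ and their liftings are global on $\widetilde{X}$. It then remains to run the proof of Theorem \ref{viceversap} globally over $\widetilde{B}$. The local $(n-1)$-support of $\rho(\xi)$ on $A$ pulls back to a local $(n-1)$-support of $\widetilde{\rho(\xi)}$; under the torsion-free hypothesis a section of $\ext^1_{\tilde{f}}(\Omega^{n-1}_{\widetilde{X}/\widetilde{B}}(-\widetilde{D}_{Hor}),\tilde{f}^{\ast}\omega_{\widetilde{B}}\otimes\Omega^{n-1}_{\widetilde{X}/\widetilde{B}})$ vanishing on a dense open set vanishes identically, so local support upgrades to global $(n-1)$-support on $\widetilde{B}$, which is exactly the content of the stated alternative hypothesis. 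The vanishing $f_{\ast}\hhom(\Omega^{n-1}_{\widetilde{X}/\widetilde{B}}(-\widetilde{D}_{Hor}),\Omega^{n-2}_{\widetilde{X}/\widetilde{B}})=0$ likewise propagates from the dense open set to all of $\widetilde{B}$, this sheaf being torsion free. Hence the identity section lifts uniquely and globally, producing global closed $(n-1)$-forms $\tilde{s}_i$ on $\widetilde{X}$ with $\tilde{s}_i\wedge\tilde{s}_j=0$; applying Theorem \ref{pcast} on the now compact $\widetilde{X}$ yields the desired rational dominant map $h\colon\widetilde{X}\dashrightarrow Y$.
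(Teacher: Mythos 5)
Your second paragraph --- upgrading the local $(n-1)$-support to a global one via torsion-freeness, propagating the vanishing of $f_*\hhom(\Omega^{n-1}_{\widetilde{X}/\widetilde{B}}(-\widetilde{D}_{Hor}),\Omega^{n-2}_{\widetilde{X}/\widetilde{B}})$ from a dense open set to all of $\widetilde{B}$, then rerunning the proof of Theorem \ref{viceversap} globally to get a unique global $h$ and liftings with $\tilde{s}_i\wedge\tilde{s}_j=0$ on the now projective $\widetilde{X}$ --- is correct and is essentially the paper's own (much terser) argument for the second assertion; your explicit remark that these sheaves are torsion free, so that generic vanishing forces vanishing, usefully fills in what the paper leaves implicit in the phrase ``the two alternative hypotheses ensure the existence of $h$''.

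The genuine gap is exactly where you suspected, in the finiteness step. The paper does not argue via $\mathrm{Bir}(Y)$ at all: it simply invokes \cite[Theorem 5.10]{R3}, the $(n-1)$-form analogue of \cite[Theorem B]{RZ4}, whose content is precisely that the monodromy of the local system generated inside $\mD^{n-1}$ by such wedge-degenerate data is finite. Your substitute argument does not close, for two reasons. First, the homomorphism $G_\mL\to\mathrm{Bir}(Y)$ is not well defined: the fibration of Theorem \ref{pcast} is attached to the rank-one subsheaf generated by the liftings $\tilde{s}_i$ of a basis of $L$, and what you know after applying a deck transformation $\gamma$ is only $\gamma^*\tilde{s}_i\wedge\gamma^*\tilde{s}_j=0$; the mixed wedges $\tilde{s}_i\wedge\gamma^*\tilde{s}_j$ need not vanish, so each translate $g\cdot L$ generates an a priori different rank-one subsheaf and a different target $Y_g$, and $\gamma$ yields only a birational map $Y\dashrightarrow Y_\gamma$ rather than a self-map of a single $Y$. (This is the same phenomenon behind the paper's remark after Corollary \ref{finito1} that the maps $h_U$ glue on $\widetilde{X}$ but not on $X$.) Second, even granting a well-defined action, your injectivity argument only shows that the kernel fixes the $\tilde{s}_i$, hence fixes $L$ pointwise; since $G_\mL$ is defined by its faithful action on the full stalk $\widehat{L}=\sum_g g\cdot L$, fixing $L$ does not force triviality in $G_\mL$ (an element fixing $L$ may still permute the translates $g\cdot L$). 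Repairing both points amounts to showing that the wedge-degeneracy propagates from $L$ to the whole generated system $\widehat{L}$ --- which is exactly the heavy lifting done by the cited theorem --- so, as written, the finiteness assertion remains unproven in your proposal.
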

\begin{proof}
As in Theorem \ref{viceversap}, we can find  liftings of the $\eta_i$ with  $\tilde{s}_i\wedge\tilde{s}_j=0$. By \cite[Theorem 5.10]{R3}, the monodromy of the local system $\mL$ generated by $L$ is finite, hence as in Corollary \ref{finito1}, the sections $\eta_i$ and $s_i$ are global on a finite covering $\widetilde{X}$ of $X$ and here we can work globally. 

The second part of the statement follows in the same fashion of Theorem \ref{viceversap} once we note that the two alternative hypotheses ensure the existence of $h$ as in the proof of the previous theorem.
% L'IPOTESI DEL VANISHING DEL $F_*$ CHE NEL TEOREMA \ref{viceversap} è LOCALE VIENE IN AUTOMATICO GLOBALE SU X TILDE PERCHE IL FASCIO $f_*\hhom( \Omega^{n-1}_{X/B}(-\sD^{A,(n-1)}_{Hor}),\Omega^{n-2}_{X/B})$ E TORSION FREE, QUINDI LOCALLY FREE SU CURVA, QUINDI ZERO SU UN APERTO IMPLICA ZERO OVUNQUE.
\end{proof}
\begin{cor}
	Under the same assumptions of the previous Corollary, we have a generically finite rational map $\widetilde{X}\dashrightarrow \widetilde{B}\times Y$.
\end{cor}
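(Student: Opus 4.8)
The plan is to mirror the proof of Corollary \ref{finito2}, replacing $1$-forms by volume forms. From Corollary \ref{globale} we are given a dominant rational map $h\colon \widetilde{X}\dashrightarrow Y$ with $Y$ smooth of general type and $\dim Y=n-1$, produced by Theorem \ref{pcast} from the global closed $(n-1)$-forms $\tilde{s}_i$ on $\widetilde{X}$, so that $\tilde{s}_i=h^{*}\mu_i$ for holomorphic top forms $\mu_i$ on $Y$. I would then consider the product map
$$
\tilde{f}\times h\colon \widetilde{X}\dashrightarrow \widetilde{B}\times Y,
$$
and observe that, since $\dim\widetilde{X}=n=1+(n-1)=\dim(\widetilde{B}\times Y)$, it is generically finite exactly when it is dominant. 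As the fibre of $\tilde{f}\times h$ over a general $(b,y)$ equals $(h|_{X_b})^{-1}(y)$, it suffices to prove that the restriction $h|_{X_b}$ is dominant (equivalently generically finite, since $\dim X_b=n-1=\dim Y$) for the general fibre $X_b$ of $\tilde{f}$.

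The first step I would carry out is to check that restriction to a general fibre kills the correction term. The modification $\tilde{s}_i-s_i$ constructed in Theorem \ref{viceversap} lies in $\tilde{f}^{*}\omega_{\widetilde{B}}\otimes\Omega^{n-2}_{\widetilde{X}/\widetilde{B}}\subseteq\Omega^{n-1}_{\widetilde{X}}$, whose restriction to $X_b$ vanishes because its $\tilde{f}^{*}\omega_{\widetilde{B}}$-factor does. Hence $\tilde{s}_i|_{X_b}=\eta_i|_{X_b}$, the value at $b$ of the section $\eta_i$ of the local system $\mD^{n-1}$, that is a holomorphic $(n-1)$-form on the $(n-1)$-dimensional variety $X_b$, i.e.\ a volume form. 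By the $(n-1)$-strictness hypothesis these restricted forms are not all zero on the general fibre, since the $\eta_i$ generate a subsheaf of $\omega_{X_b}$ which is generically of rank one.

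The key step is the dominance of $h|_{X_b}$. For general $b$ the fibre $X_b$ is smooth and, $h$ being a dominant rational map, its indeterminacy locus has codimension at least $2$ in $\widetilde{X}$ and so does not contain $X_b$; thus $h|_{X_b}\colon X_b\dashrightarrow Y$ is defined on a dense open subset and satisfies $(h|_{X_b})^{*}\mu_i=\tilde{s}_i|_{X_b}=\eta_i|_{X_b}$. If $h|_{X_b}$ were not dominant, its image $Z_b\subsetneq Y$ would have $\dim Z_b\le n-2$; then every top form $\mu_i$ on $Y$ would restrict to zero on $Z_b$ and hence pull back to zero on $X_b$, forcing $\eta_i|_{X_b}=0$ for all $i$, contrary to the previous step. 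Therefore $h|_{X_b}$ is dominant and generically finite.

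Putting these together, the general fibre of $\tilde{f}\times h$ is finite, so $\tilde{f}\times h$ is generically finite, which is the assertion of the corollary. The hard part is the second step, the identification $\tilde{s}_i|_{X_b}=\eta_i|_{X_b}\neq 0$: once the correction term is seen to restrict to zero on fibres and $(n-1)$-strictness is used to ensure a nonzero volume form on the general fibre, the dimension count for pullbacks of top forms closes the argument exactly as in Corollary \ref{finito2}.
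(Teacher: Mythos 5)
Your proposal is correct and follows the paper's own route: the paper likewise takes the map to be $\tilde{f}\times h$ and (as in Corollary \ref{finito2}) deduces generic finiteness from the behaviour of $h$ on the general fibre. Your verification that $\tilde{s}_i|_{X_b}=\eta_i|_{X_b}\neq 0$ forces $h|_{X_b}$ to be dominant simply makes explicit what the paper leaves implicit.
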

\begin{proof}
	The rational map $\widetilde{X}\dashrightarrow \widetilde{B}\times Y$ is of course given by $\tilde{f}\times h$.
	
%	We note that, differently from to Corollary \ref{finito1} and \ref{finito2}, the covering is not necessarily finite because we are working with $n-1$-forms and we cannot directly appy \cite[Theorem B]{RZ4}
\end{proof}

\begin{rmk}
	Note that if $p_g(X_b)\geq a\cdot p_g(Y)-b,$ with $a>0$ and $b\in \mZ$, the rank $r$ of the local system $\mL$ is
	$$
	r\leq \frac{p_g(X_b)+b}{a}.
	$$
\end{rmk}

\subsection{Fibered threefolds}
We can obtain some bounds for the geometric genus $p_g(Y)$, where $Y$ is as in Theorem \ref{viceversap}, in  the case of a relatively minimal fibered threefold $f\colon X\to B$. These bounds are based on the work of \cite{Ba} and \cite{Ri}.

We start by recalling some standard definitions. Let $f_*\omega_{X/B}=\sU\oplus\sA$ the second Fujita decomposition of the direct image of the  relative dualizing sheaf and $u_f:=\rank \sU$ the rank of the unitary flat part, so that $\rank\sA=p_g(X_b)-u_f$.
Denote also by $g$ the genus of $B$ and by $K_f$ the divisor of $\omega_{X/B}$, we have the following invariants for our fibration
$$
K_f^3:=K_X^3-2(g-1)K^2_{X_b},
$$ 
$$
\Delta_f:=deg f_*\sO_X(K_f),
$$ $$ \chi_f:=\chi_{X_b}\chi_B-\chi_X.
$$
Hence, for fibered threefolds, it makes sense to define two slopes
$$
\lambda^1_f:=\frac{K_f^3}{\chi_f}, \quad \lambda^2_f:=\frac{K_f^3}{\Delta_f}.
$$
From now on we assume that $\chi_f> 0$  so that, following \cite[Lemma 5.6]{Ba}, we have that $\chi_f\leq\Delta_f$ and hence more importantly $\lambda^2_f\leq \lambda^1_f$. We refer to \cite[Theorem 5.7]{Ba} for examples where $\chi_f\geq 0$.

We also use the following definition from \cite{Ri}.
\begin{defn}
Let $|M|$ be a linear system on a surface $S$. We say that 
\begin{itemize}
	\item $|M|$ is g.f.d. if it induces a generically finite map $\phi_{|M|}\colon S\to \mP^k$ which is a double cover on the image which is a ruled surface
	\item $|M|$ is g.f.n.d. if it induces a generically finite map which is not a double cover on a ruled surface
	\item $|M|$ is a fibration of gonality $\gamma$ if $\phi_{|M|}\colon S\to \mP^k$ is a fibration with general fiber a smooth curve of gonality $\gamma$
\end{itemize} 
\end{defn}	

The key point of the estimates of this section is the assumption that the ample part of the Fujita decomposition $\sA$ is semistable. Under this assumption we have that 
\begin{equation}
	0\subsetneq \sA\subsetneq f_*\omega_{X/B}
\end{equation} is the Harder-Narasimhan filtration of $f_*\omega_{X/B}$ and we denote by $|M_\sA|$ the movable part of $\sA$ restricted to the fiber $X_b$.

We have the following diagram which puts together all the relevant pieces for our purposes.
\begin{equation}
	\xymatrix{&Y\ar@{-->}[rr]^{\phi_{|K_Y|}}&&\mP^{p_g(Y)-1}\\
		X_b\ar@{-->}[rr]^{\phi_{|K_{X_b}|}}\ar@{-->}[ru]^h\ar@{-->}[rrrd]_{\phi_{|M_\sA|}}&&\mP^{p_g(X_b)-1}\ar@{-->}[ru]\ar@{-->}[rd]&\\
		&&&\mP^k}
\end{equation}
Note that the diagonal maps between the projective spaces are just projections.

The result is the following
\begin{prop}
	Let $f\colon X\to B$ be a relatively minimal fibered threefold with $\chi_f >0$ and $g\leq1$. Assume that the ample part of the Fujita decomposition $\sA$ is semistable.  Under the hypotheses of Theorem \ref{viceversap} we have the following bound on $p_g(Y)$.
	
	If $\rank \sA\geq 2$
	\begin{itemize}
		\item If $|K_{X_b}|$ and $|M_{\sA}|$ are g.f.n.d.
		$$
		p_g(Y)\leq \frac{63p_g(X_b)+20}{66}
		$$
		\item If $|K_{X_b}|$ is g.f.n.d. and $|M_{\sA}|$ is g.f.d. 
		$$
		p_g(Y)\leq \frac{65p_g(X_b)-4q+14}{68}
		$$
		\item If $|K_{X_b}|$ is g.f.n.d. and $|M_{\sA}|$ defines a fibration of gonality $\gamma\geq 5$
		$$
		p_g(Y)\leq \frac{64p_g(X_b)+12}{67}
		$$
		\item If $|K_{X_b}|$ is g.f.n.d. and $|M_{\sA}|$ defines a fibration of gonality $\gamma\geq 4$
		$$
		p_g(Y)\leq \frac{65p_g(X_b)+11}{68}
		$$
		\item If $|K_{X_b}|$ is g.f.n.d. and $|M_{\sA}|$ defines a fibration of gonality $\gamma\geq 3$
		$$
		p_g(Y)\leq \frac{67p_g(X_b)+10}{70}
		$$
		\item If $|K_{X_b}|$ is g.f.n.d. and $|M_{\sA}|$ defines a fibration of gonality $\gamma\geq 2$
		$$
		p_g(Y)\leq \frac{67p_g(X_b)+9}{70}
		$$
		\item If $|K_{X_b}|$ and $|M_{\sA}|$ are g.f.d.
		$$
		p_g(Y)\leq \frac{66p_g(X_b)-6q+11}{68}
		$$
		\item If $|K_{X_b}|$ is g.f.d. and $|M_{\sA}|$ defines a fibration of gonality $\gamma\geq 5$
		$$
		p_g(Y)\leq \frac{65p_g(X_b)-2q+9}{67}
		$$
		\item If $|K_{X_b}|$ is g.f.d. and $|M_{\sA}|$ defines a fibration of gonality $\gamma\geq 4$
		$$
		p_g(Y)\leq \frac{66p_g(X_b)-2q+8}{68}
		$$
		\item If $|K_{X_b}|$ is g.f.d. and $|M_{\sA}|$ defines a fibration of gonality $\gamma\geq 3$
		$$
		p_g(Y)\leq \frac{67p_g(X_b)-2q+8}{69}
		$$
		\item If $|K_{X_b}|$ is g.f.d. and $|M_{\sA}|$ defines a fibration of gonality $\gamma\geq 2$
		$$
		p_g(Y)\leq \frac{68p_g(X_b)-2q+7}{70}
		$$
        \item If $|K_{X_b}|$ defines a fibration of gonality $\gamma\geq 5$
        $$
        p_g(Y)\leq \frac{62p_g(X_b)+10}{67}
        $$
        \item If $|K_{X_b}|$ defines a fibration of gonality $\gamma\geq 4$
        $$
        p_g(Y)\leq \frac{16p_g(X_b)+2}{17}
        $$
        \item If $|K_{X_b}|$ defines a fibration of gonality $\gamma\geq 3$
        $$
        p_g(Y)\leq \frac{22p_g(X_b)+2}{23}
        $$
        \item If $|K_{X_b}|$ defines a fibration of gonality $\gamma\geq 2$
        $$
        p_g(Y)\leq \frac{34p_g(X_b)+2}{35}
        $$
    \end{itemize} 
If $\rank \sA=1$
$$
p_g(Y)\leq p_g(X_b)-1
$$ 
\end{prop}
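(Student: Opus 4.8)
The plan is to reduce the statement to a lower bound on $\rank\sA$ and then to import the slope inequalities of \cite{Ba} and \cite{Ri}. First I would record the elementary but decisive inequality $p_g(Y)\le u_f$, where $u_f=\rank\sU=p_g(X_b)-\rank\sA$. By Theorem \ref{viceversap}, made global on the finite covering of Corollary \ref{globale}, the map $h$ is given by closed forms, and Theorem \ref{pcast} gives $\eta_i=h^*\mu_i$ for holomorphic top forms $\mu_i$ on the surface $Y$. For any $\mu\in H^0(Y,\omega_Y)$ the pullback $h^*\mu$ is a global closed $2$-form whose fibrewise restrictions are flat sections of $f_*\omega_{X/B}$, hence lie in its unitary summand $\sU=\mD^{n-1}\otimes\sO_B$; as $\sU$ is semistable of degree zero, $h^0(\sU)\le\rank\sU=u_f$, and injectivity of $h^*$ gives $p_g(Y)\le u_f$. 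Since $\rank\sA=p_g(X_b)-u_f$, it suffices to bound $\rank\sA$ from below in terms of $p_g(X_b)$.

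The case $\rank\sA=1$ then requires nothing more: $p_g(Y)\le u_f=p_g(X_b)-1$, the last displayed bound.

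For $\rank\sA\ge 2$ I would use the semistability hypothesis, which together with $\deg\sU=0$ makes $0\subsetneq\sA\subsetneq f_*\omega_{X/B}$ the Harder--Narasimhan filtration, so that $\Delta_f=\deg f_*\omega_{X/B}=\deg\sA$ and $\sA$ has slope $\Delta_f/\rank\sA$. The movable part $|M_\sA|$ of $\sA|_{X_b}$ is a sublinear system of $|K_{X_b}|$ of projective dimension $\rank\sA-1$, so the fibre carries the two maps $\phi_{|K_{X_b}|}$ and $\phi_{|M_\sA|}$ whose geometric type (g.f.d., g.f.n.d., or a fibration of prescribed gonality) is exactly the data indexing the cases. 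Under $\chi_f>0$ and $g\le 1$ one has $\chi_f\le\Delta_f$ and $\lambda^2_f\le\lambda^1_f$, and the Clifford--Severi/Xiao-type estimates of \cite{Ba} and \cite{Ri} attach to each pair of types a lower bound for $\lambda^2_f=K_f^3/\Delta_f$. The next step is to convert such a slope bound, through the identities $\Delta_f=\deg\sA$, $p_g(X_b)=u_f+\rank\sA$ and the positivity of the two graded pieces, into an inequality of the form $u_f\le\kappa\cdot\rank\sA$ (up to an additive term in the irregularity $q$), with $\kappa$ governed by the Clifford index or gonality of the two fibre maps. Combined with $u_f=p_g(X_b)-\rank\sA$ this forces $\rank\sA\ge p_g(X_b)/(\kappa+1)$ up to lower order terms, and substituting into $p_g(Y)\le u_f=p_g(X_b)-\rank\sA$ reproduces the displayed inequalities, the denominators $17,23,35,66,\dots$ being the relevant values of $\kappa+1$.

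The hard part will be the penultimate step: producing, case by case, the sharp slope inequality attached to the types of $\phi_{|K_{X_b}|}$ and $\phi_{|M_\sA|}$, and in the mixed cases correctly adding the contribution of the full canonical image to that of the $|M_\sA|$-image. This is exactly where the machinery of \cite{Ba} and \cite{Ri} is indispensable, since the constants $\kappa$ and the additive terms in $q$ are read off from the Clifford index and gonality of the general fibre; the hypotheses $\chi_f>0$ and $g\le 1$ are what guarantee that these estimates hold in the clean numerical form stated above.
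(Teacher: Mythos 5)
Your proposal is correct and takes essentially the same route as the paper: the decisive inequality $p_g(Y)\le u_f$ (pullbacks of the closed top forms on $Y$ restrict on the general fiber to sections of the local system $\mD^{n-1}$ underlying $\sU$), followed by importing the case-by-case upper bounds on $u_f$ from \cite[Proposition 4.3.2]{Ri}, which are derived exactly as you outline from the Harder--Narasimhan filtration $0\subsetneq\sA\subsetneq f_*\omega_{X/B}$ with $\mu_2=0$, the Xiao--Ohno--Konno slope inequality together with the estimates of \cite{Ba}, Ohno's bound $K_f^3\le 72\chi_f$ for $g\le 1$, and $\lambda^2_f\le\lambda^1_f$ coming from $\chi_f>0$. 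Your reformulation of the last step as a lower bound on $\rank\sA$ is algebraically equivalent to the paper's direct isolation of $u_f$ from the slope inequality (e.g.\ $9+\frac{3u_f-20}{p_g(X_b)-u_f}\le 72$ in the first case), so nothing essential differs or is missing.
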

\begin{proof}
	Proposition 4.3.2 in \cite{Ri} computes a list of all the upper bounds for the rank $u_f$. Our result follows immediately by noticing that $p_g(Y)\leq u_f$ since all the top forms on $Y$ are de Rham closed and hence their pullback restricted on the fiber is in the local system $\mD^{n-1}$ which we recall is the local system associated to the unitary flat vector bundle $\sU$, that is $\sU=\mD^{n-1}\otimes \sO_B$; see (\ref{seconda}).
	
	For the reader's convenience we briefly give an idea on how these bounds are obtained in \cite{Ri}. Consider the first case of this list, that is $|K_{X_b}|$ and $|M_{\sA}|$ are both g.f.n.d. The Harder-Narasimhan filtration of $f_*\omega_{X/B}$ is 
	$$
	0\subsetneq \sA\subsetneq f_*\omega_{X/B}
	$$ with $\mu_1=\deg f_*\omega_{X/B}/\rank \sA=\deg f_*\omega_{X/B}/(p_g(X_b)-u_f)$ and $\mu_2=0$ since $\sU$ is flat.
	
	The Xiao-Ohno-Konno formula then gives the inequality
	\begin{equation}
		\label{xok}
	K^3_f\geq \mu_1(M_{\sA}^2+M_{\sA}{K_{X_b}}+K_{X_b}^2).
	\end{equation} Thanks to \cite[Lemma 5.9]{Ba}, we get the necessary estimates  for the quantities appearing in (\ref{xok}) and we get 
	$$
	K^3_f\geq \frac{\deg f_*\omega_{X/B}}{p_g(X_b)-u_f}(3(p_g(X_b)-u_f)-7+3(p_g(X_b)-u_f)-6+3p_g(X_b)-7)
	$$ that easily gives the lower bound for the slope $\lambda^2_f$
	\begin{equation}
		\label{lambda}
			\lambda^2_f\geq 9+\frac{3u_f-20}{p_g(X_b)-u_f}
	\end{equation} see \cite[Theorem 4.2]{Ri}.
	
	The last step consists in using the inequality 
\begin{equation}
	K^3_f-2(g-1)K^2_{X_b}\leq 72\chi_f,
\end{equation} see \cite{O}.
This inequality for $g\leq 1$ gives 
$$
\lambda^1_f\leq 42
$$ 
hence remembering that under our hypothesis $\lambda^2_f\leq \lambda^1_f$ and putting together with (\ref{lambda}) we get 
\begin{equation}
	9+\frac{3u_f-20}{p_g(X_b)-u_f}\leq 72.
\end{equation} Isolating $u_f$ and using $p_g(Y)\leq u_f$ we get the first bound of the list.

The following bounds can be done in a similar way.
\end{proof}

\section{A result on the Albanese map}

This short final section is a result on the Albanese map which does  not directly follow by the previous work but it is in the same spirit. This is Theorem \ref{D}.
\begin{thm}
	\label{abel}
	Let $X$ be a smooth $n$-dimensional variety and $\alpha\colon X\to A:=Alb(X)$ its Albanese morphism. Assume that $\sL:=\Ima(\alpha^*\Omega^{n-1}_A\to \Omega^{n-1}_X)$ is a line bundle on $X$, then the global sections of $\sL$ define a rational map $h\colon X\dashrightarrow Y$ to a variety $Y$ of general type. Furthermore if the sections of $H^0(X,\sL)$ are $n-1$-strict, we can take $h$ to be a morphism and $Y$ is the Stein factorization of $X\to Z$ where $Z:=\alpha(X)$.
\end{thm}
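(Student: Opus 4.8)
The plan is to exploit the translation invariance of holomorphic forms on the abelian variety $A$, which turns $\sL$ into a globally generated line bundle whose sections are pullbacks of constant $(n-1)$-forms, and then to recognise the resulting map as the Gauss map of the Albanese image, feeding the sections into the generalized Castelnuovo--de Franchis theorem (Theorem \ref{pcast}). First I would trivialize: writing $V:=H^0(A,\Omega^1_A)\cong H^0(X,\Omega^1_X)$ one has $\Omega^1_A\cong\sO_A\otimes V$ and hence $\Omega^{n-1}_A\cong\sO_A\otimes\bigwedge^{n-1}V$. Pulling back along $\alpha$ gives a surjection $\sO_X\otimes\bigwedge^{n-1}V\twoheadrightarrow\sL$ onto the line bundle $\sL$, so $\sL$ is globally generated by the forms $w_\omega:=\alpha^*\omega$, $\omega\in\bigwedge^{n-1}V$, and this surjection defines a morphism $\Phi\colon X\to\mP(\bigwedge^{n-1}V)$. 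At a general point $x$ the stalk $\sL_x$ equals $\bigwedge^{n-1}W_x$, where $W_x:=\Ima(\alpha^*\Omega^1_A\to\Omega^1_X)_x$ has dimension $\dim\alpha(X)=\dim Z$; since $\sL$ has rank one this forces $\dim Z=n-1$ and shows that $\sL_x$ is the decomposable line attached to $W_x$. Consequently $\Phi$ takes values in the Pl\"ucker image of $\mathrm{Gr}(n-1,V)$ and factors as $\Phi=\gamma\circ\alpha$, where $\gamma$ is the Gauss map of the $(n-1)$-dimensional subvariety $Z\subseteq A$.

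For the first assertion I would note that, because $\sL$ has rank one, any two sections $w_\omega,w_{\omega'}$ satisfy $w_\omega\wedge w_{\omega'}=0$ in $\bigwedge^2\Omega^{n-1}_X$, which is exactly the Castelnuovo--de Franchis condition for $p=n-1$. The sections $w_\omega$ therefore define a map $h\colon X\dashrightarrow Y$ whose image is identified, via $\Phi=\gamma\circ\alpha$, with the Gauss image $\gamma(Z)$, i.e. with the base of the Ueno fibration of $Z$. By the structure theory of subvarieties of abelian varieties this base is of general type, giving the first statement (note that $\sL$ globally generated already makes the associated linear-system map a genuine morphism, the finer identification being reserved for the strict case).

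For the second assertion I would use that $(n-1)$-strictness means precisely that $n-1$ of the differentials $d(w_\omega/w_{\omega'})$ do not wedge to zero, so that Theorem \ref{pcast} applies to the full system and yields $\dim Y=n-1$ together with expressions $w_\omega=h^*\mu_\omega$. Equivalently the Gauss map $\gamma$ is generically finite, so $Z$ has trivial Ueno fibration and is itself of general type. Since the ratios $w_\omega/w_{\omega'}$ are $\alpha$-pullbacks of the ratios of the generating sections $\omega|_Z\in H^0(Z,\omega_Z)$, the morphism $h$ factors through $\alpha$ via a generically finite map $g\colon Z\to Y$; comparing fibers shows that $h$ has connected fibers and that $g$ is generically one-to-one, so that $h$ is exactly the connected-fibre part of $\alpha$. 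Hence $Y$ is the Stein factorization of $\alpha\colon X\to Z$, finite over $Z$, and being finite over the general-type $Z$ it is of general type.

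The hard part will be the general-type conclusion together with the identification of $Y$ with the Stein factorization: both rest on controlling the Gauss map $\gamma$ of $Z\subseteq A$, that is on the structure theory of subvarieties of abelian varieties (Ueno's theorem), and on checking that the pulled-back forms $w_\omega$ genuinely descend through $\alpha$ to the generating sections of $\omega_Z$. The delicate point is verifying that strictness forces $g$ to be generically one-to-one, so that the map produced by Theorem \ref{pcast} coincides on the nose with the connected-fibre part of the Albanese map rather than with a further generically finite quotient of $Z$.
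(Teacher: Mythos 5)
Your opening steps match the paper's: the rank count showing $\dim Z=n-1$ (the paper gets it slightly more directly: $\sL\neq 0$ forces $\dim Z\geq n-1$, and $\dim Z=n$ would force $\rank\sL=n$), and the observation that $\sL$ being a line bundle makes any two of its sections wedge to zero in $\bigwedge^2\Omega^{n-1}_X$, which is exactly what feeds Theorem \ref{pcast}. The genuine gap is in your second assertion, and you flag it yourself: you never actually prove that the map $h$ produced by Theorem \ref{pcast} is the connected-fibre part of $\alpha$. Your argument that the ratios $w_\omega/w_{\omega'}$ are pullbacks of meromorphic functions on $Z$ only shows that the \emph{linear-system} map $\Phi=\gamma\circ\alpha$ factors through $Z$; the map $h$ is a finer quotient (it satisfies $w_\omega=h^*\mu_\omega$, so $\Phi$ factors through $h$, and a priori $Y$ only \emph{dominates} $\gamma(Z)$), and your comparison map even points the wrong way: since the Stein factor is finite over $Z$, the map to be shown birational goes $\widetilde{Z}\dashrightarrow Y$ with $\widetilde{Z}$ the Stein factorization, not $g\colon Z\to Y$. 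The paper closes precisely this hole with a foliation argument you do not have: every global section of $\sL$ is the pullback of a translation-invariant, hence closed, form on $A$; contraction gives $T_X\to\Omega^{n-2}_X$, and closedness makes the (saturated) kernel closed under Lie bracket, i.e.\ a foliation by curves. Its leaves lie in the fibres of both $\alpha'\colon X\to Z$ and $h$, and since $h$ has connected fibres, $Y$ is birational to the Stein factorization of $\alpha'$ --- which is exactly the ``delicate point'' you leave open.

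Two further steps of your Gauss-map detour are unsound, though in the end redundant. For the first assertion you identify the image of $h$ with $\gamma(Z)$ and call it ``the base of the Ueno fibration'': in fact $\gamma$ only factors through the Ueno quotient $Z\to Z/B$, and $\gamma(Z)$ is a generically finite \emph{image} of that base, not the base itself; moreover a generically finite image of a general-type variety need not be of general type (a double plane maps generically finitely onto $\mP^2$), so general type of $Y$ cannot be deduced this way. The paper avoids all of this because Theorem \ref{pcast} asserts directly that its target $Y$ is of general type. Your translation of $n-1$-strictness into generic finiteness of $\gamma$ (via $\Phi=\gamma\circ\alpha$ having generic rank $n-1$) is correct and a nice gloss, and the Gauss-map criterion does give that $Z$ itself is of general type, a statement the paper does not even claim; but none of this substitutes for the leaf-containment argument that the theorem's Stein-factorization conclusion actually requires.
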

\begin{proof}
	We begin by showing that $Z:=\alpha(X)$ is $n-1$-dimensional. For convenience consider $\alpha$ as the composition
	$$
	X \stackrel{\alpha'}{\longrightarrow} Z\stackrel{i}{\hookrightarrow} A.
	$$
	Since $\sL$ is not zero, it immediately follows that $\dim Z\geq n-1$. Similarly $\dim Z$ is not $n$ otherwise $\sL$ would be of rank $n$.
	
	Now we define a rational map $h\colon X\dashrightarrow Y$. Indeed the global sections of $\sL$ are $n-1$-forms with $\omega_i\wedge\omega_j=0$ since $\sL$ is a line bundle. We can then apply Theorem \ref{pcast} which defines our variety $Y$. In general we have $\dim Y\leq n-1$.
	
	Finally if the sections of $H^0(X,\sL)$ are $n-1$-strict, $\dim Y=n-1$, again by Theorem \ref{pcast}. To show that we have a rational map $Y\dashrightarrow Z$ we note that the kernel of the global sections of $\sL$ is a foliation as in \cite{RZ5}. More precisely, any global section of $\sL$ defines by contraction a map
	$$
	T_X\to \Omega^{n-2}_X
	$$ and since the sections are closed, the kernel is closed under Lie bracket and, up to saturation, gives a foliation.
	
	These leaves are contained in the fibers of both $\alpha'\colon X\to Z$ and $h\colon X\dashrightarrow Y$. Since $h$ has connected fibers we have that $Y$ is birational to the variety given by the Stein factorization of $\alpha'$. In particular it turns out that we can choose $h$ to be a morphism.
\end{proof}
The following corollary is in some sense a version of the Volumetric theorem \cite[Theorem 1.5.3]{PZ}
\begin{cor}
	Under the hypotheses of the Theorem \ref{abel}, if the restrictions of the Albanese map $\alpha$ to the fibers $X_b$ have degree 1, then these fibers are birational to $Y$.
\end{cor}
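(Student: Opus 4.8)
The plan is to exploit the Stein factorization structure already produced by Theorem \ref{abel}. Recall from that proof that $\alpha$ factors as $X \xrightarrow{\alpha'} Z \hookrightarrow A$ with $Z=\alpha(X)$ of dimension $n-1$, and that under the strictness hypothesis the morphism $h\colon X\to Y$ is exactly the connected-fiber part of the Stein factorization of $\alpha'$, so that $\alpha'=\pi\circ h$ for a finite morphism $\pi\colon Y\to Z$. Restricting to a general fiber $X_b$, where $\alpha'$, $h$ and $\pi$ all behave well, I obtain $\alpha|_{X_b}=\pi\circ(h|_{X_b})$ as maps onto their images in $Z$, so the degree of $\alpha|_{X_b}$ onto its image coincides with the degree of $\pi\circ(h|_{X_b})$.

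First I would use the hypothesis $\deg(\alpha|_{X_b})=1$ to pin down the image. Degree one means $\alpha|_{X_b}\colon X_b\to \alpha(X_b)$ is birational onto its image; in particular it is generically finite, so $\dim\alpha(X_b)=\dim X_b=n-1$. Since $\alpha(X_b)\subseteq Z$ with $Z$ irreducible of dimension $n-1$, this forces $\alpha(X_b)=Z$. Consequently $\pi\circ(h|_{X_b})$ dominates $Z$; as $\pi$ is finite, the closure of $h(X_b)$ is an irreducible $(n-1)$-dimensional subvariety of the $(n-1)$-dimensional $Y$, hence equals $Y$. Thus $h|_{X_b}\colon X_b\to Y$ is dominant and, the source and target having the same dimension, generically finite.

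Next I would apply multiplicativity of degrees to the composition of generically finite dominant morphisms $X_b\xrightarrow{h|_{X_b}} Y\xrightarrow{\pi} Z$:
$$
1=\deg(\alpha|_{X_b})=\deg(h|_{X_b})\cdot\deg(\pi).
$$
Both factors are positive integers, so each equals $1$; in particular $h|_{X_b}\colon X_b\to Y$ is birational, which is precisely the assertion that the general fiber $X_b$ is birational to $Y$ (and incidentally $\pi$ is an isomorphism, so $Y=Z$ in this case).

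I expect the only delicate point to be the passage from the numerical hypothesis on $\alpha|_{X_b}$ to the geometric input needed for the degree formula, namely verifying that $h|_{X_b}$ is genuinely generically finite so that multiplicativity applies. This is exactly where the degree-one assumption is consumed: it guarantees that $\alpha(X_b)$ is full-dimensional in $Z$, which in turn forces $h(X_b)$ to be dense in $Y$. Everything else is the formal factorization through the Stein factorization of $\alpha'$ supplied by Theorem \ref{abel} together with the multiplicativity of degrees, both valid for a general $X_b$.
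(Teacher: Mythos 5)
Your proposal is correct and takes essentially the same route as the paper: the paper's (very terse) proof likewise uses the Stein factorization $X\to Y\to Z$ from Theorem \ref{abel}, deduces from the degree-one hypothesis that the finite map $Y\to Z$ is birational, and concludes that $X_b$, $Z$ and $Y$ all lie in the same birational class. Your write-up simply makes explicit the steps the paper leaves implicit, namely that $\alpha(X_b)=Z$ by a dimension count in the irreducible $Z$, that consequently $h|_{X_b}$ dominates $Y$, and that multiplicativity of degrees forces $\deg(h|_{X_b})=\deg(\pi)=1$.
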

\begin{proof}
	By the previous Theorem it follows that $Y\to Z$ is a birational map, hence the fibers $X_b$ are in the same birational class as $Y$.
\end{proof}

\nocite{*}


\begin{thebibliography}{Muk04}

\bibitem[AC]{AC}
E. Arbarello, M. Cornalba, \emph{Su una congettura di Petri}, Comment. Math. Helvetici 56 (1981), 1--38.
%
%\bibitem[AD]{AD}
%C. Araujo, S. Druel, \emph{Characterization of generic projective space bundles and algebraicity
%of foliations}, Commentarii Mathematici Helvetici, European Mathematical Society, 94 (2019), Issue 4, 833--853.
%%

\bibitem[Ba]{Ba}
M. \'A. Barja, {\em On the slope and geography of fibred surfaces and threefolds}, Ph.D. Thesis, 2000, \url{http://diposit.ub.edu/dspace/handle/2445/35132}

\bibitem[B]{B}
C. Birkar. \emph{Topics in algebraic geometry}, 2011, \url{https://arxiv.org/abs/1104.5035}.

\bibitem[BGN]{BGN}
M. \'A. Barja, V. Gonz\'alez-Alonso, J. C. Naranjo, \emph{Xiao’s conjecture for general fibred surfaces}, J. Reine Angew. Math. (Crelles Journal) 739 (2018), 297--308. 

%\bibitem[BHT]{BHT}
%F. Bogomolov, B. Hassett, and Y. Tschinkel, editors.
%\newblock {\em Birational {Geometry}, {Rational} {Curves}, and {Arithmetic}}.
%\newblock Simons {Symposia}. Springer--Verlag, New York, 2013.
%
%\bibitem[BL]{BL}C. Birkenhake, H. Lange \emph{Complex Abelian Varieties} Grundlehren der
%mathematischen Wissenschaften 302, A Series of Comprehensive Studies in Mathematics, Springer-Verlag Berlin Heidelberg  (1992).
%C. Voisin, \emph{Hodge theory and complex algebraic geometry, I}. Translated from the French by Leila Schneps. Cambridge Studies in Advanced Mathematics, 76. Cambridge University Press, Cambridge, 2002.
    
%\bibitem[An]{andr} A. Andreotti, {\em  On a theorem of Torelli}, Amer. J. Math. 80 (1958), 801--828.
%
%
%\bibitem[BC]{BC} I. Bauer, F. Catanese, {\em Symmetry and variations of Hodge Structures}, Asian J. Math. 8 (2004), no. 2, 363--390. 
%
%\bibitem[Ca1]{Ca1}
%F. Catanese, {\em Infinitesimal Torelli theorems and counterexamples to
%Torelli problems}, Topics in transcendental algebraic geometry
%(Princeton, N.J., 1981/1982), 143--156, Ann. of Math. Stud., 106, Princeton Univ. Press,
%Princeton, NJ, (1984).
%
\bibitem[Ca]{Ca2} F. Catanese, \emph{Moduli
and classification of irregular Kaehler manifolds (and algebraic varieties) with Albanese
general type fibrations},  Invent. Math. 104 (1991), no. 2, 263--289. 

\bibitem[CD1]{CD1}
F. Catanese, M. Dettweiler, \emph{Answer to a question by Fujita on Variation of Hodge Structures}, Higher Dimensional Algebraic Geometry: In honour of Professor Yujiro Kawamata's sixtieth birthday, Mathematical Society of Japan, Tokyo, Japan, (2017), 73--102.
\bibitem[CD2]{CD2}
F. Catanese, M. Dettweiler,  \emph{The direct image of the relative dualizing sheaf needs not be semiample}, C. R. Math. Acad. Sci. Paris 352 (2014), no. 3, 241--244.
%\bibitem[CD3]{CD3}
%F. Catanese, M. Dettweiler, \emph{Vector bundles on curves coming from variation of Hodge structures}, Internat. J. Math. 27 (2016), no. 7, 1640001, 25 pp.
%\bibitem[CK]{CK}
%F. Catanese, Y. Kawamata, \emph{Fujita decomposition over higher dimensional base}, European Journal of Mathematics 5,  (2019), 720--728.






%\bibitem[CCM]{CCM}
%F. Catanese, C. Ciliberto, M. Mendes Lopes,\emph{ On the 
%classification of irregular surfaces of general type with nonbirational bicanonical map},
%Trans. Amer. Math. Soc. 350 (1998), no. 1, 275--308. 
%
%\bibitem[CS]{CS} F. Catanese, F-O Schreyer,  \emph{Canonical 
%projections of irregular algebraic surfaces},
%Algebraic geometry, 79�116, de Gruyter, Berlin, (2002). 
%
%
%\bibitem[CMP]{CMP} J. Carlson, S. M{\"{u}}ller-Stach, C. Peters, 
%{\em Period Mappings and Period Domains}, Cambridge Studies in 
%Advanced Mathematics, 85. Cambridge University Press, Cambridge, (2003).
%

%\bibitem[CEZGT]{CEZGT} Eduardo Cattani, Fouad El Zein, Phillip A. Griffiths, and L\^e D\~ung Tr\'ang, editors.
%{\em Hodge theory}, volume 49 of Mathematical Notes. Princeton University Press, Princeton,
%NJ, 2014.

%\bibitem[CMR]{CMR}
% C. Ciliberto, M. Mendes Lopes, X. Roulleau \emph{ On Schoen surfaces},
%Comment. Math. Helv. 90 (2015), 59-74. 

%  
\bibitem[CNP]{CNP}
A. Collino, J. C. Naranjo, G. P. Pirola, {\em The Fano normal function}, J. Math. Pures Appl. (9) 98 (2012), no. 3, 346--366.

%\bibitem[Co]{cox}
%D. A. Cox, {\em Generic Torelli and infinitesimal variation of Hodge 
%structure}, Algebraic geometry, Bowdoin, 1985 (Brunswick, Maine, 1985), 235--246, Proc. 
%Sympos. Pure Math., 46, Part 2, Amer. Math. Soc., Providence, 
%RI, (1987).
%
%
%\bibitem[Cod]{Cod}
%G. Codogni, {\em Satake compactifications, Lattices and Schottky problem},  Ph.D. Thesis, (2013).
%

%
%\bibitem[C]{C}
%G. Ceresa, \emph{C is not algebraically equivalent to $C^-$ in its Jacobian}, Ann. of Math. (2) 117 (1983), 285--291.

\bibitem[CP]{CP} A. Collino, G. P. Pirola, 
\emph{The Griffiths infinitesimal invariant for a curve in its Jacobian}, Duke Math. J., 78 (1995), no. 1, 59--88.

\bibitem[CRZ]{CRZ} L. Cesarano, L. Rizzi, F. Zucconi, \emph{On birationally trivial families and Adjoint quadrics}, Proc. Edinb. Math. Soc., (2022), \url{https://www.doi.org/10.1017/S0013091522000232}.
%
%
%\bibitem[De]{De}
%P. Deligne, \emph{Equations Diff\'erentielles \`a Points Siunguliers R\'eguliers}, LNM 163, Springer-Verlag ,Berlin, Heidelberg, New York,  1970. 
%
%\bibitem[Dr]{Dr}
%S. Druel. \emph{Codimension 1 foliations with numerically trivial canonical class on singular spaces.} Duke Math. J. 170 (1) 95--203, (2021). 


%\bibitem[Fa]{Fa}
%N. Fakhruddin, \emph{Algebraic cycles on generic Abelian 
%varieties,} Comp. Math., 100, (1996), 101--119.


\bibitem[Fu1]{Fu}
T. Fujita, \emph{On K\"ahler fiber spaces over curves}, 
 J. Math. Soc. Japan 30 (1978), no. 4, 779--794. 

\bibitem[Fu2]{Fu2}
T. Fujita, \emph{The sheaf of relative canonical forms of a K\"ahler fiber
space over a curve}, Proc. Japan Acad. Ser. A Math. Sci. 54 (1978),
no. 7, 183--184.
%
%\bibitem[G-A1]{victor1}
%V. Gonz\'alez-Alonso, \emph{Hodge numbers of irregular varieties and fibrations}, Ph.D. Thesis, (2013).

%\bibitem[Fl]{flenner}
%H. Flenner, {\em The infinitesimal Torelli problem for zero sets of sections of
%vector bundles},   Math. Z. 193 (1986), no. 2, 307--322. 
%
%\bibitem[G-A1]{victor1}
%V. Gonz\'alez-Alonso, \emph{Hodge numbers of irregular varieties and fibrations}, Ph.D. Thesis, (2013).
%
%%
\bibitem[G-A]{victor}
V. Gonz\'alez-Alonso, \emph{On deformations of curves supported on rigid divisors}, Ann. Mat. Pura Appl. (4)
195(1), 111--132 (2016).
%
\bibitem[GST]{GST}
V. Gonz\'alez-Alonso, L. Stoppino and S. Torelli, \emph{On the rank of the flat unitary summand of the Hodge bundle}, Trans. Amer. Math. Soc. 372 (2019), 8663--8677.

\bibitem[GT]{GT}
V. Gonz\'alez-Alonso, S. Torelli, \emph{Families of curves with Higgs field of arbitrarily large kernel}, Bull. London Math. Soc. 53 (2021) Issue 2, 493--506.

%
%
%\bibitem[Gr1]{green1}
%M. L.  Green,  {\em  Koszul cohomology and the geometry of projective
%varieties}, J. Differential
%Geom. {19}  (1984), 125--171.
%
%\bibitem[Gr2]{green2}
%M. L.  Green,  {\em  Koszul cohomology and the geometry of projective
%varieties II}, J. Differential
%Geom. {20}  (1984), 279--289.
%
%\bibitem[Gr3]{green3}
%M. L.  Green,  {\em The period map for hypersurface sections of high
%degree of an arbitrary variety}, 
%Compositio Math. {55}  (1985), 135--156.
%

%\bibitem[Gri1]{grif1}
%P. Griffiths, {\em Periods of integrals on algebraic manifolds I,II}, 
%Amer. J. Math. {90} (1968), 568--626, 805--865.

%\bibitem[Gri2]{grif2}
%P. Griffiths, {\em Periods of integrals on algebraic manifolds III. Some global differential-geometric properties of the period mapping}, 
%Inst. Hautes \'Etudes Sci. Publ. Math. No. 38 (1970), 125--180.
%
%\bibitem[GS]{gr-s}
%P. Griffiths, W.  Schmid, {\em Recent developments in Hodge theory: a 
%discussion of techniques and results}, Discrete subgroups of Lie groups and applications to moduli 
%(Internat. Colloq., Bombay, 1973), pp. 31--127. Oxford Univ. Press, Bombay, (1975).

%
%
%\bibitem[GZ]{GZ} U. Garra, F. Zucconi, \emph{Very Ampleness and the Infinitesimal Torelli Problem},
%  Math. Z. 260 (2008), no. 1, 31--46. 
%


\bibitem[H]{H1}
R. Hartshorne, \emph{Algebraic Geometry}, Graduate Texts in Mathematics vol 52., Springer, (1977).
%
%\bibitem[KM]{K-M} K. Kodaira, J. Morrow, {\em Complex Manifolds},
%Holt-Rinehart-Winston, New York,
%(1971).
%
%\bibitem[Ku]{K} M. Kuranishi, {\em New proof for the existence of
%locally complete families of complex structures}, 
%1965 Proc. Conf. Complex Analysis (Minneapolis, 1964) pp. 142--154 Springer, Berlin.
% 
%\bibitem[Ii]{Ii} S. Itaka, 
%\emph{Algebraic Geometry}, Springer Verlag New York, Heidelberg, 
%(1982).
%%
%
%\bibitem[Il]{Il}
%L. Illusie, {\em R\'eduction semi-stable et d\'ecomposition de complexes de de Rham  \`a coefficients}, Duke Math. J. 60 (1990), no. 1, 139--185.
%\bibitem[LPW]{LPW}
%D. Lieberman, C. Peters, R. Wilsker, {\em A Theorem of Local-Torelli Type},
%Math. Ann.  231 (1977), 39--45.
%
\bibitem[K]{K}
K. Kodaira, \emph{Complex Manifolds and Deformation of Complex
Structures}, A Series of Comprehensive Studies in Math.
283, Springer-Verlag, New York (1986).

%\bibitem[K]{K}
%S. Kebekus.
%\newblock \emph{Uniruledness {Criteria} and {Applications}}.
%\newblock In {Birational {Geometry}, {Rational} {Curves}, and
%	{Arithmetic}}, Simons {Symposia}. Springer-Verlag, New York, 2013.
%
%\bibitem[KO]{kob}
%S. Kobayashi, T. Ochiai, \emph{Meromorphic Mappings onto Compact Complex Spaces of General Type}, Inventiones mathematicae 31 (1976), 7--16. 
%\bibitem[Ko] {Ko}
%J. Koll\'ar, \emph{Families of varieties of general type}, 
%https://web.math.princeton.edu/~kollar/book/modbook20170720.pdf, (2017).


%\bibitem[K]{K}
%K. Konno, \emph{On the Irregularity of Special Non-Canonical Surfaces}, Publ. Res. Inst. Math. Sci., 30 (1994), no. 4,  671--688. 

%\bibitem[Laz]{Laz} R. Lazarsfeld,
%\emph{Positivity in algebraic geometry. I–II}, Ergebnisse der Mathematik und ihrer Grenzgebiete. 3. Folge. Springer--Verlag, Berlin, 2004.
%
%\bibitem[L]{L} V. Lazić, \emph{Selected Topics in Algebraic Geometry-Foliations}, notes available at \url{https://www.uni-saarland.de/fileadmin/upload/lehrstuhl/lazic/Skripten/foliation.pdf}, (2017).
\bibitem[L]{lange}
H. Lange, \emph{Universal families of extensions}, J. Algebra 83, (1983), 101--112.

%\bibitem[M] {M}
%M. M\"oller, \emph{Maximally irregularly fibred surfaces of general type}, Manuscripta Matematica Issue 1 vol. 116 (2005) 71--92.
\bibitem[MC]{Mc}
J. McCleary, \emph{A User’s Guide to Spectral Sequences}, Cambridge
Studies in Advanced Mathematics, Cambridge University
Press, 2001.
%
%\bibitem[MR]{MR}
% V. Maillot, D. R\"ossler {\em Une conjecture sur la torsion des classes de Chern des fibr\'es de Gauss-Manin}, Publ. Res. Inst. Math. Sci. 46 (2010), no. 4, 789--828.
%
%
%\bibitem[Mo]{Mo} S. Mori, \emph{Classification of higher-dimensional varieties Proc}, Symp. Pure Math. 46 (1987), 269--332.
%\bibitem[N] {N}
%M.V. Nori, \emph{Algebraic Cycles and Hodge Theoretic Connectivity}, Inv. Math., 111
%(1993), 349--373.
%
%\bibitem[OS]{OS} F. Oort, J. Steenbrink, 
%{\em The local Torelli problem for algebraic curves}, 
 %Journ\'ees de G\'eom\'etrie Alg\'ebrique d'Angers, Juillet 1979/Algebraic Geometry, Angers, 1979,
 %pp. 157--204, Sijthoff and Noordhoff, Alphen aan den Rijn--Germantown, Md., (1980). 
%%
%\bibitem[PP]{pp}
%G. Pareschi, M. Popa, \emph{Strong generic vanishing and
%a higher-dimensional Castelnuovo-de Franchis inequality}, Duke Math.
%J., 150(2), 269--285, (2009).
%\bibitem[Pe]{Pe} C. Peters, {\em Some Remarks About Reider's 
%Article ``Infinitesimal Torelli Theorem for certain irregular 
%surfaces of general type}, Math. Ann.  281 (1988), 315--324.

\bibitem[O]{O} K. Ohno, \textit{Some inequality for minimal fibration of surface of general type over curves},
J. Math. Soc. Japan 44 (1992), no.4, 643--666.

\bibitem[PT]{PT}
G.P. Pirola, S. Torelli, \emph{Massey Products and Fujita decompositions on fibrations of curves}, Collectanea Mathematica, 71 (2020), 39--61. 
%
%\bibitem[PS]{PS} C. A. M. Peters, J. H. M. Steenbrink, {\em Mixed Hodge structures}, Volume 52
%of Ergebnisse der Mathematik und ihrer Grenzgebiete. 3. Folge. A Series of Modern
%Surveys in Mathematics. Springer--Verlag, Berlin, 2008.
%
%
\bibitem[PR]{PR}
G. P. Pirola, C. Rizzi, {\em Infinitesimal invariant and vector bundles}, Nagoya Math. J., 186 (2007), 95--118.

%\bibitem[P]{P}
%G. P. Pirola, \emph{On a conjecture of Xiao,} J. Reine Angew. Math., 431 (1992), 75--89.
%
\bibitem[PZ]{PZ}
G. P. Pirola, F. Zucconi, \emph{Variations of the Albanese morphisms,} J. Algebraic Geom., 12 (2003), no. 3, 535--572. 


\bibitem[Ran]{Ran}
Z. Ran \emph{On subvarieties of abelian varieties}, Invent. Math., 62(3) (1981), 459--479.

%
\bibitem[Ra]{Ra}
E. Raviolo, {\em Some geometric applications of the theory of variations of Hodge structures}, Ph.D. Thesis.

\bibitem[Ri]{Ri}
E. Riva, {\em Slope inequalities for fibred surfaces and fibred
	threefolds}, Ph.D. Thesis, 2021, \url{https://boa.unimib.it/retrieve/handle/10281/374266/584870/phd_unimib_841667.pdf}

\bibitem[R1]{tesi} L. Rizzi, \emph{Adjoint forms and algebraic families}, PhD thesis, 2017, \url{https://air.uniud.it/handle/11390/1132961}.

\bibitem[R2]{R} L. Rizzi, \emph{On Massey products and Rational homogeneous varieties}, arXiv: 2012.06375 (2020) to appear on Kyoto Journal of Mathematics.
\bibitem[R3]{R3} L. Rizzi, \emph{Massey products and Fujita decomposition over higher dimensional base}, \url{https://arxiv.org/abs/2210.09462}, (2022).

%\bibitem[Re]{Re} I. Reider, {\em On the Infinitesimal Torelli Theorem for certain irregular surfaces of general type}, Math. Ann.  280 (1988), 285--302.
%
%\bibitem[Re2]{Re2} 
%I. Reider, \emph{Nonabelian Jacobian of projective surfaces. Geometry and representation theory}, Lecture Notes in Mathematics, 2072. Springer, Heidelberg, 2013.



 
\bibitem[RZ1]{RZ2} L. Rizzi, F. Zucconi, {\em Generalized adjoint forms on algebraic varieties,}  Ann. Mat. Pura Appl. Vol. 196, Issue 3, (2017), 819--836.
\bibitem[RZ2]{RZ3} L. Rizzi, F. Zucconi, {\em On Green’s proof of infinitesimal Torelli theorem for hypersurfaces,} Atti Accad. Naz. Lincei Cl. Sci. Fis. Mat. Natur., Vol. 29, Issue 4, (2018), 689--709

\bibitem[RZ3]{RZ1} L. Rizzi, F. Zucconi, {\em Differential forms and quadrics of the canonical image}, Ann. Mat. Pura Appl. 199,  (2020), 2341--2356.
\bibitem[RZ4]{RZ4} L. Rizzi, F. Zucconi, {\em Fujita decomposition and Massey product for fibered varieties}, Nagoya Mathematical Journal 247, (2022), 624--652.
\bibitem[RZ5]{RZ5} L. Rizzi, F. Zucconi, {\em Local Systems, Algebraic Foliations and Fibrations}, \url{https://arxiv.org/abs/2205.15085}, (2022).
%\bibitem[To]{to}
%R. Torelli, {\em Sulle variet\`{a} di Jacobi, I, II}, Rendiconti R. 
%Accad. dei Lincei {22-2}  (1913), 98--103, 437--441.
%
%\bibitem[To]{To} S. Torelli, {\em Fujita decompositions and infinitesimal
%invariants on fibred surfaces}, 2018, PhD Thesis.

%\bibitem[Vo]{Vo} C. Voisin, {\it Une Remarque Sur l'Invariant Infinit\'esimal 
%Des Fonctions Normales,} C. R. Acad. Sci. Paris, t. 307, S\'erie I, 
%(1988), 157--160.
%
%
%\bibitem[U]{U}
%K. Ueno, \emph{Classification theory of algebraic varieties and compact complex spaces}, Lecture
%Notes in Math., vol. 439, Springer-Verlag, 1975.


%\bibitem[Vo1]{Vo1}
%C. Voisin, \emph{Hodge theory and complex algebraic geometry, I}. Translated from the French by Leila Schneps. Cambridge Studies in Advanced Mathematics, 76. Cambridge University Press, Cambridge, 2002.
%
%\bibitem[Vo2]{Vo2}
%C. Voisin, \emph{Hodge theory and complex algebraic geometry, II}. Translated from the French by Leila Schneps. Cambridge Studies in Advanced Mathematics, 77. Cambridge University Press, Cambridge, 2003.

%\bibitem[We]{we}
%A. Weil, {\em
%Zum Beweis des Torellischen Satzes}, Nachr. Akad. Wiss. G\"ottingen, 
%Math.-Phys. Kl.   IIa (1957), 32--53.
%\bibitem[X]{X}
%G. Xiao, \emph{Fibered Algebraic Surfaces with Low Slope,} Math. Ann. 276,  (1987), 449--466.

%
%\bibitem[X]{X}
% G. Xiao, \emph{Algebraic surfaces with high canonical degree}, Math. Ann., 274 (1986), 473--483.
\bibitem[S]{S} E. Sernesi, \emph{General curves on algebraic surfaces}, J. Reine Angew. Math. (Crelles Journal), vol. 2015, no. 700, (2015), 209--233.
\end{thebibliography}
\end{document}